\RequirePackage{fix-cm}
\documentclass[oneside,english,british]{amsart}
\usepackage{lmodern}
\usepackage[T1]{fontenc}
\usepackage[latin9]{inputenc}
\usepackage{geometry}
\geometry{verbose,tmargin=3cm,bmargin=3cm,lmargin=3cm,rmargin=3cm}
\usepackage{color}
\usepackage{babel}
\usepackage{prettyref}
\usepackage{amsthm}
\usepackage{amstext}
\usepackage{amssymb}
\usepackage{fixltx2e}
\usepackage{esint}
\usepackage[unicode=true,pdfusetitle,
 bookmarks=true,bookmarksnumbered=false,bookmarksopen=false,
 breaklinks=false,pdfborder={0 0 0},backref=false,colorlinks=false]
 {hyperref}

\makeatletter
\numberwithin{equation}{section}
\numberwithin{figure}{section}
\theoremstyle{plain}
\newtheorem{thm}{\protect\theoremname}[section]
  \theoremstyle{remark}
  \newtheorem{rem}[thm]{\protect\remarkname}
 \newcommand\thmsname{\protect\theoremname}
 \newcommand\nm@thmtype{theorem}
 \theoremstyle{plain}
 
 \newenvironment{namedthm}[1][Undefined Theorem Name]{
   \ifx{#1}{Undefined Theorem Name}\renewcommand\nm@thmtype{theorem*}
   \else\renewcommand\thmsname{#1}\renewcommand\nm@thmtype{namedtheorem}
   \fi
   \begin{\nm@thmtype}}
   {\end{\nm@thmtype}}
  \theoremstyle{plain}
  \newtheorem{prop}[thm]{\protect\propositionname}
  \theoremstyle{plain}
  \newtheorem{lem}[thm]{\protect\lemmaname}
  \theoremstyle{remark}
  \newtheorem{claim}[thm]{\protect\claimname}
  \theoremstyle{plain}
  \newtheorem{cor}[thm]{\protect\corollaryname}

\usepackage{bbold}

\makeatother

  \addto\captionsbritish{\renewcommand{\claimname}{Claim}}
  \addto\captionsbritish{\renewcommand{\corollaryname}{Corollary}}
  \addto\captionsbritish{\renewcommand{\lemmaname}{Lemma}}
  \addto\captionsbritish{\renewcommand{\propositionname}{Proposition}}
  \addto\captionsbritish{\renewcommand{\remarkname}{Remark}}
  \addto\captionsbritish{\renewcommand{\theoremname}{Theorem}}
  \addto\captionsenglish{\renewcommand{\claimname}{Claim}}
  \addto\captionsenglish{\renewcommand{\corollaryname}{Corollary}}
  \addto\captionsenglish{\renewcommand{\lemmaname}{Lemma}}
  \addto\captionsenglish{\renewcommand{\propositionname}{Proposition}}
  \addto\captionsenglish{\renewcommand{\remarkname}{Remark}}
  \addto\captionsenglish{\renewcommand{\theoremname}{Theorem}}
  \providecommand{\claimname}{Claim}
  \providecommand{\corollaryname}{Corollary}
  \providecommand{\lemmaname}{Lemma}
  \providecommand{\propositionname}{Proposition}
  \providecommand{\remarkname}{Remark}
  \providecommand{\theoremname}{Theorem}
\providecommand{\theoremname}{Theorem}

\begin{document}

\title{Equidistribution of values of linear forms on quadratic surfaces.}

\author{Oliver Sargent}

\address{\textsc{Department Of Mathematics, University Walk, Bristol, BS8
1TW, UK.}\texttt{ }}

\email{\texttt{Oliver.Sargent@bris.ac.uk}}
\begin{abstract}
In this paper we investigate the distribution of the set of values
of a linear map at integer points on a quadratic surface. In particular,
it is shown that subject to certain algebraic conditions, this set
is equidistributed. This can be thought of as a quantitative version
of the main result from \cite{2011arXiv1111.4428S}. The methods used
are based on those developed by A. Eskin, S. Mozes and G. Margulis
in \cite{MR1609447}. Specifically, they rely on equidistribution
properties of unipotent flows. 
\end{abstract}
\maketitle

\section{Introduction.}

Consider the following situation. Let $X$ be a rational surface in
$\mathbb{R}^{d}$, $R$ be a fixed region in \foreignlanguage{english}{$\mathbb{R}^{s}$}
and $F:X\rightarrow\mathbb{R}^{s}$ be a polynomial map. An interesting
problem is to investigate the size of the set 
\[
Z=\left\{ x\in X\cap\mathbb{Z}^{d}:F\left(x\right)\in R\right\} ,
\]
consisting of integer points in $X$ such that the corresponding values
of $F$, are in $R$. Suppose that the set of values of $F$ at the
integer points of $X$, is dense in $\mathbb{R}^{s}$. In this case,
the set $Z$ will be infinite. However, the set 
\[
Z_{T}=\left\{ x\in X\cap\mathbb{Z}^{d}:F\left(x\right)\in R,\left\Vert x\right\Vert \leq T\right\} ,
\]
can be considered. This set will be finite, and its size will depend
on $T$. Typically, the density assumption indicates that the set
$Z$ might be equidistributed, within the set of all integer points
in $X$. Namely, as $T$ increases, the size of the set $Z_{T}$,
should be proportional to the appropriately defined volume, of the
set 
\[
\left\{ x\in X:F\left(x\right)\in R,\left\Vert x\right\Vert \leq T\right\} ,
\]
consisting of real points on $X$, with values in $R$ and bounded
norm. Such a result, if it is obtained, can be seen as quantifying
the denseness of the values of $F$ at integral points.

The situation described above is too general, but it serves as motivation
for what is to come. So far, what is proven, is limited to special
cases. For instance, when $M:\mathbb{R}^{d}\rightarrow\mathbb{R}^{s}$
is a linear map, classical methods can be used to establish necessary
and sufficient conditions, which ensure the values of $M$ on $\mathbb{Z}^{d}$
are dense in $\mathbb{R}^{s}$. The equidistribution problem described
above can also be considered in this case. It is straightforward to
obtain an asymptotic estimate for the number of integer points with
bounded norm whose values lie in some compact region of $\mathbb{R}^{s}$
(cf. \cite{MR0349591}).

When $Q:\mathbb{R}^{d}\rightarrow\mathbb{R}$ is a quadratic form
the situation is that of the Oppenheim conjecture. In \cite{MR993328},
G. Margulis obtained necessary and sufficient conditions to ensure
that the values of $Q$ on $\mathbb{Z}^{d}$ are dense in $\mathbb{R}$.
Considerable work has gone into the equidistribution problem in this
case, first by S.G. Dani and G. Margulis, who obtained an asymptotic
lower bound for the number of integers with bounded height such that
their images lie in a fixed interval (cf. \cite{MR1237827}). Later,
A. Eskin, G. Margulis and S. Mozes, gave the corresponding asymptotic
upper bound for the same problem (cf. \cite{MR1609447}). The major
ingredient, used in the proof of Oppenheim conjecture, is to relate
the density of the values of a quadratic form at integers to the density
of certain orbits inside a homogeneous space. This connection was
first noted by M. S. Raghunathan in the late 70's (appearing in print
in \cite{MR629475}, for instance). It is, in this way, using tools
from dynamical systems to study the orbit closures of subgroups corresponding
to quadratic forms, that Margulis proved the Oppenheim conjecture.
Similarly, the later refinement, due to Dani-Margulis, who considered
the values of quadratic forms at primitive integral points in \cite{MR1032925}
and work on the equidistribution (quantitative) problem by Dani-Margulis
and Eskin-Margulis-Mozes, were also obtained by studying the orbit
closures of subgroups acting on homogeneous spaces. 

Similar techniques were also used by A. Gorodnik in \cite{MR2067128},
to study the set of values of a pair, consisting of a quadratic and
linear form, at integer points and in \cite{2011arXiv1111.4428S}
to establish conditions, sufficient to ensure that the values of a
linear map at integers lying on a quadratic surface are dense in the
range of the map. The main result of this paper deals with the corresponding
equidistribution problem and is stated in the following Theorem. 
\begin{thm}
\label{main theorem} Suppose $Q$ is a quadratic form on $\mathbb{R}^{d}$
such that $Q$ is non-degenerate, indefinite with rational coefficients.
Let $M=\left(L_{1},\ldots,L_{s}\right):\mathbb{R}^{d}\rightarrow\mathbb{R}^{s}$
be a linear map such that: 
\begin{enumerate}
\item \label{enu:condition 1}The following relations hold, $d>2s$ and
$\textrm{rank}\left(Q|_{\ker\left(M\right)}\right)=d-s$. 
\item \label{enu:condition 2}The quadratic form $Q|_{\ker\left(M\right)}$
has signature $\left(r_{1},r_{2}\right)$ where $r_{1}\geq3$ and
$r_{2}\geq1$.
\item \label{enu:condition 3}For all $\alpha\in\mathbb{R}^{s}\setminus\left\{ 0\right\} $,
$\alpha_{1}L_{1}+\dots+\alpha_{s}L_{s}$ is non rational. 
\end{enumerate}
Let $a\in\mathbb{Q}$ be such that the set $\left\{ v\in\mathbb{Z}^{d}:Q\left(v\right)=a\right\} $
is non empty. Then there exists $C_{0}>0$ such that for every $\theta>0$
and all compact $R\subset\mathbb{R}^{s}$ with piecewise smooth boundary,
there exists a $T_{0}>0$ such that for all $T>T_{0}$, 
\[
\left(1-\theta\right)C_{0}\textrm{Vol}\left(R\right)T^{d-s-2}\leq\left|\left\{ v\in\mathbb{Z}^{d}:Q\left(v\right)=a,M\left(v\right)\in R,\left\Vert v\right\Vert \leq T\right\} \right|\leq\left(1+\theta\right)C_{0}\textrm{Vol}\left(R\right)T^{d-s-2},
\]
where $\textrm{Vol}\left(R\right)$ is the $s$ dimensional Lebesgue
measure of $R$. \end{thm}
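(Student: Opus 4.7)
The plan is to adapt the orbit-counting methodology of Eskin-Margulis-Mozes \cite{MR1609447} (which treats the case $s=0$) to the thinner region $\Omega_T(R)=\{v\in V_a:M(v)\in R,\,\|v\|\leq T\}$. The starting point is to translate the count into a dynamical problem on $G/\Gamma$, where $G=\mathrm{SO}(Q)^{\circ}$ and $\Gamma=G(\mathbb{Z})$. Since $Q$ is rational, non-degenerate and indefinite with $V_{a}(\mathbb{Z})\neq\emptyset$, Borel--Harish-Chandra gives that $V_{a}(\mathbb{Z})=\Gamma v_{1}\sqcup\cdots\sqcup\Gamma v_{m}$ for finitely many base points $v_{i}$, so it suffices to establish the asymptotic for each orbit count $N_T^{(i)}(R)=\#\{w\in\Gamma v_{i}:w\in\Omega_T(R)\}$ separately and sum.

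The crucial algebraic object is the subgroup $H_M=\{g\in G:M\circ g=M\}$. Condition (\ref{enu:condition 1}) allows one to identify $H_M$, via the $Q$-orthogonal decomposition $\mathbb{R}^d=\ker M\oplus(\ker M)^{\perp_Q}$, with $\mathrm{SO}(Q|_{\ker M})$ acting trivially on the complement; condition (\ref{enu:condition 2}) then guarantees that $H_M$ is semisimple of real rank at least one and generated by $\mathrm{Ad}$-unipotent one-parameter subgroups. This is the group whose equidistribution will produce the factor $\textrm{Vol}(R)$ in the asymptotic. Using Ratner's orbit-closure and measure classification theorems together with the Dani--Margulis linearisation technique, I would show that condition (\ref{enu:condition 3}) forces every $H_M$-orbit in $G/\Gamma$ to be either dense or confined to a proper homogeneous subvariety defined over $\mathbb{Q}$ and therefore incompatible with the non-rationality assumption; the qualitative version of this analysis already appears in \cite{2011arXiv1111.4428S}, and the quantitative refinement is what is needed here.

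The bulk of the proof is a Siegel/theta-transform computation. For a test function $\phi_T$ approximating $\mathbb{1}_{\Omega_T(R)}$ from above and below, define $\hat\phi_T$ on $G/\Gamma$ by $\hat\phi_T(g\Gamma)=\sum_{w\in\Gamma v_{i}}\phi_T(gw)$, so that $\hat\phi_T(e\Gamma)=N_T^{(i)}(R)$. I would then (a) compute $\int_{G/\Gamma}\hat\phi_T\,dg$ via a Weil unfolding argument, obtaining the main term $C_{0}\,\textrm{Vol}(R)\,T^{d-s-2}$ after a polar-coordinate volume computation on $V_{a}$, the inequality $d>2s$ ensuring that the exponent is positive and that the main term dominates boundary contributions; and (b) control the variance of $\hat\phi_T$ along $H_M$-translates using a suitable modification of the $\alpha$-function estimates of \cite{MR1609447} combined with the equidistribution from the previous step. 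Chebyshev's inequality then upgrades mean convergence to pointwise convergence at $e\Gamma$, and sandwiching $\mathbb{1}_R$ between smooth functions (using the piecewise smoothness of $\partial R$) produces the two-sided bound in the statement.

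The main obstacle, I expect, will be the variance estimate when $\Omega_T(R)$ is thin: the standard Eskin--Margulis--Mozes machinery is calibrated to regions of volume $\sim T^{d-2}$ rather than $\sim T^{d-s-2}$, so the $\alpha$-function must be replaced by a version sensitive to the constraint $M(v)\in R$, and proving its $L^{2}$-boundedness over $G/\Gamma$ demands a careful analysis of how $H_M$-orbits approach the cusps. I anticipate that condition (\ref{enu:condition 3}) enters a second time at this stage, via the Dani--Margulis quantitative non-divergence theorem applied to $H_M$, to rule out excessive mass of $\hat\phi_T$ concentrating in cuspidal neighbourhoods and thereby supply the variance bound needed for the pointwise asymptotics claimed in the theorem.
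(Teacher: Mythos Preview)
Your overall architecture---theta transform, finite orbit decomposition via Borel--Harish-Chandra, the identification of $H_M\cong\mathrm{SO}(Q|_{\ker M})$, and the Ratner/Dani--Margulis input---matches the paper, but the step ``Chebyshev's inequality then upgrades mean convergence to pointwise convergence at $e\Gamma$'' is a genuine gap. A variance bound together with Chebyshev yields only a measure estimate on the set of bad cosets; it cannot single out the specific coset $e\Gamma$, and $\hat\phi_T$ is not constant along $H_M$-translates of $e\Gamma$ (the constraint $M(v)\in R$ is $H_M$-invariant but the norm constraint $\|v\|\leq T$ is not), so averaging along the $H_M$-orbit does not recover $\hat\phi_T(e\Gamma)$ either. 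The paper never attempts to evaluate a Siegel transform pointwise. Instead it exploits the $KAK$ structure of $H_M$: an explicit coordinate computation (Lemma~\ref{lemma about evctors}) shows that for a suitable compactly supported $f$ the dilated spherical average $e^{(d-s-2)t}\int_{K_g} F_{f,g}(\hat a_t k)\,d\nu_g(k)$ is itself asymptotic to the number of $v\in X_Q^a(\mathbb{Z})$ with $M(v)\in R$ and $e^t\leq\|v\|\leq 2e^t$, once $f$ is chosen so that the auxiliary function $J_{f,g}$ approximates $\mathbb{1}_{R\times[1,2]}$. The count is thus an \emph{average} from the outset, and what is needed is an ergodic theorem (Theorem~\ref{thm:ergodic result-1}) asserting that this $K_g$-average converges to $\int_{G_g/\Gamma_g} F_{f,g}\,d\mu_g$ for the unbounded function $F_{f,g}$; this is proved by combining the Dani--Margulis uniform equidistribution theorem with a truncation controlled by a uniform bound on $\int_{K_I}\alpha(a_t k\Delta)^\delta\,d\nu_I$.

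On the cusp difficulty your intuition is right but the resolution is different: the paper does not replace $\alpha$ by a version adapted to the constraint $M(v)\in R$. It keeps the standard $\alpha$ on $SL_d(\mathbb{R})/SL_d(\mathbb{Z})$ and instead proves the required uniform bound on its spherical averages over the \emph{smaller} compact $K_I\subset H_I$ (Theorem~\ref{thm:upper bound-1}) via a Benoist--Quint contraction function $\varphi_\epsilon$ on $\bigwedge(\mathbb{R}^d)$. Condition~(\ref{enu:condition 3}) enters exactly here (Lemma~\ref{lem:fixed vectors}): it guarantees that no nonzero $\Delta$-rational monomial in the exterior algebra is $H_I$-fixed, which is precisely what is needed for the contraction argument to terminate with a finite value. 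It enters a second time in Lemma~\ref{thm:4.4}, to show that no proper $\mathbb{Q}$-subgroup of $G_g$ contains $H_g$, so that the exceptional tubes in the Dani--Margulis theorem have $\nu_g$-measure zero in $K_g$.
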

\begin{rem}
The constant $C_{0}$ appearing in Theorem \ref{main theorem} is
such that 
\[
C_{0}\textrm{Vol}\left(R\right)T^{d-s-2}\sim\textrm{Vol}\left(\left\{ v\in\mathbb{\mathbb{R}}^{d}:Q\left(v\right)=a,M\left(v\right)\in R,\left\Vert v\right\Vert \leq T\right\} \right)
\]
where the volume on the right is the Haar measure on the surface defined
by $Q\left(v\right)=a$. 
\end{rem}

\begin{rem}
Theorem \ref{main theorem} should hold with the condition that $\textrm{rank}\left(Q|_{\ker\left(M\right)}\right)=d-s$
replaced by the condition that $\textrm{rank}\left(Q|_{\ker\left(M\right)}\right)>3$.
Dealing with the more general situation requires taking into account
the nontrivial unipotent part of $\textrm{Stab}_{SO\left(Q\right)}\left(M\right)$,
as such lower bounds could probably be proved using methods of $[DM93]$,
but so far no way has been found to obtain the statement that would
be needed in order to obtain an upper bound. 
\end{rem}

\begin{rem}
As in \cite{MR1609447} it would be possible to obtain a version of
Theorem \ref{main theorem} where the condition that $\left\Vert v\right\Vert <T$
was replaced by $v\in TK_{0}$ where $K_{0}$ is an arbitrary deformation
of the unit ball by a continuous and positive function. It should
also be possible to obtain a version of Theorem \ref{main theorem}
where the parameters $T_{0}$ and $C_{0}$ remain valid for any pair
$\left(Q,M\right)$ coming from compact subsets of pairs satisfying
the conditions of the Theorem. 
\end{rem}

\begin{rem}
The cases when the quadratic form $Q|_{\ker\left(M\right)}$ has signature
$\left(2,2\right)$ or $\left(2,1\right)$ can be considered exceptional.
There are asymptotically more integers than expected (by a factor
of $\log T$) lying on certain surfaces defined by quadratic forms
of signature $\left(2,2\right)$ or $\left(2,1\right)$. This leads
to counterexamples of Theorem \ref{main theorem} in the cases when
the quadratic form $Q|_{\ker\left(M\right)}$ has signature $\left(2,2\right)$
or $\left(2,1\right)$. Details of these examples are found in \prettyref{sec:Counterexamples}.
\end{rem}

\subsection*{Outline of the paper}

The proof of Theorem \ref{main theorem} rests on statements about
the distribution of orbits in a certain homogeneous spaces. The philosophy
is that equidistribution of the orbits corresponds to equidistribution
of the points considered in Theorem \ref{main theorem}. Consider
the following Theorem of M. Ratner found in \cite{MR1262705}.
\begin{namedthm}[Ratner's Equidistribution Theorem]
Let $G$ be a connected Lie group, $\Gamma$ a lattice in $G$ and
$U=\left\{ u_{t}:t\in\mathbb{R}\right\} $ a one parameter unipotent
subgroup of $G$. Then for all $x\in G/\Gamma$ the closure of the
orbit $Ux$ has an invariant measure, $\mu_{\overline{Ux}}$ supported
on it and for all bounded continuous functions, $f$ on $G/\Gamma$,
\[
\lim_{T\rightarrow\infty}\frac{1}{T}\int_{0}^{T}f\left(u_{t}x\right)=\int_{\overline{Ux}}fd\mu_{\overline{Ux}}.
\]

\end{namedthm}
Recall that in the proof of the quantitative Oppenheim conjecture
(cf. \cite{MR1609447}) one needs to consider an unbounded function
on the space of lattices. Similarly, in order to prove Theorem \ref{main theorem}
one needs to consider an unbounded function $F$ on a certain homogeneous
space. The basic idea is to try to apply Ratner's Equidistribution
Theorem to $F$ in order to show that the average of the values of
$F$ evaluated along a certain orbit converges to the average of $F$
on the entire space. This is the fact that corresponds to the fact
that integral points on the quadratic surface with values in $R$
are equidistributed. The main problem in doing this is that $F$ is
unbounded and so one must obtain an ergodic theorem, taking a similar
form to Ratner's Equidistribution Theorem, but valid for unbounded
functions. In order to do this one needs precise information about
the behaviour of the orbits near the cusp. This information is obtained
in \prettyref{sec:The-alpha-function} and comes in the form of non
divergence estimates for certain dilated spherical averages. In order
to obtain these estimates we use a certain function defined by Y.
Benoist and J.F. Quint in \cite{MR2874934}. The required ergodic
theorem is then proved in \prettyref{sec:Ergodic-theorems.}. Finally
in \prettyref{sec:Proof-of-Theorem} the proof of Theorem \ref{main theorem}
is completed using an approximation argument similar to that found
in \cite{MR1609447}. Specifically, the averages of $F$ over the
space are related to the quantity $C_{0}\textrm{Vol}\left(R\right)T^{d-s-2}$
and the averages of $F$ along an orbit are related to the number
of integer points with bounded height, lying on the surface and with
values in $R$. In \prettyref{sec:Set-up.} the basic notation is
set up and the main results from \prettyref{sec:The-alpha-function}
and \prettyref{sec:Ergodic-theorems.} are stated.

\subsection*{Acknowledgements}

The author would like to thank Alex Gorodnik for many helpful discussions
and remarks about earlier versions of this paper.

\section{Set up.\label{sec:Set-up.}}

\subsection{Main results. }

For the rest of the paper the following convention is in place: $s,d$
and $p$ will be fixed natural numbers such that $2s<d$ and $0<p<d$.
Also, $r_{1}$ and $r_{2}$ will be varying, natural numbers such
that $d-s=r_{1}+r_{2}$. Let $\mathcal{L}$ denote the space of linear
forms on $\mathbb{R}^{d}$ and let $\mathcal{C}_{\textrm{Lin}}$ denote
the subset of $\mathcal{L}^{s}$ such that for all $M\in\mathcal{C}_{\textrm{Lin}}$
condition \ref{enu:condition 3} of Theorem \ref{main theorem} is
satisfied. A quadratic form on $\mathbb{R}^{d}$ is said to be defined
over $\mathbb{Q}$, if it has rational coefficients or is a scalar
multiple of a form with rational coefficients. For $a$, a rational
number let $\mathcal{Q}\left(p,a\right)$ denote quadratic forms on
$\mathbb{R}^{d}$ defined over $\mathbb{Q}$ with signature $\left(p,d-p\right)$
such that the set $\left\{ v\in\mathbb{Z}^{d}:Q\left(v\right)=a\right\} $
is non empty for all $Q\in\mathcal{Q}\left(p,a\right)$. Define
\[
\mathcal{\mathcal{C}_{\textrm{Pairs}}}\left(a,r_{1},r_{2}\right)=\left\{ \left(Q,M\right):Q\in\mathcal{Q}\left(p,a\right),M\in\mathcal{\mathcal{C}_{\textrm{Lin}}}\textrm{ and }Q|_{\ker\left(M\right)}\textrm{ has signature }\left(r_{1},r_{2}\right)\right\} .
\]
Note that for $r_{1}\geq3$ and $r_{2}\geq1$ the set $\mathcal{\mathcal{C}_{\textrm{Pairs}}}\left(a,r_{1},r_{2}\right)$
consists of pairs satisfying the conditions of Theorem \ref{main theorem}.
Although the set $\mathcal{\mathcal{C}_{\textrm{Pairs}}}\left(a,r_{1},r_{2}\right)$
and hence its subsets and sets derived from them depend on $a$, this
dependence is not a crucial one, so from now on, most of the time
this dependence will be omitted from the notation. For $M\in\mathcal{L}^{s}$
and $R\subset\mathbb{R}^{s}$ a connected region with smooth boundary
let $V_{M}\left(R\right)=\left\{ v\in\mathbb{R}^{d}:M\left(v\right)\in R\right\} .$
For $Q\in\mathcal{Q}\left(p,d-p\right)$, $a\in\mathbb{Q}$ and $\mathbb{K}=\mathbb{R}$
or $\mathbb{Z}$ let $X_{Q}^{a}\left(\mathbb{K}\right)=\left\{ v\in\mathbb{K}^{d}:Q\left(v\right)=a\right\} .$
Denote the annular region inside $\mathbb{R}^{d}$ by $A\left(T_{1},T_{2}\right)=\left\{ v\in\mathbb{R}^{d}:T_{1}\leq\left\Vert v\right\Vert \leq T_{2}\right\} .$
Using this notation, we state the following (equivalent) version of
Theorem \ref{main theorem}, which will be proved in \prettyref{sec:Proof-of-Theorem}.
\begin{thm}
\label{main theorem-1}Suppose that $r_{1}\geq3$, $r_{2}\geq1$ and
$a\in\mathbb{Q}$. Then for all $\left(Q,M\right)\in\mathcal{\mathcal{C}_{\textrm{Pairs}}}\left(a,r_{1},r_{2}\right)$
there exists $C_{0}>0$ such that for every $\theta>0$ and all compact
$R\subset\mathbb{R}^{s}$ with piecewise smooth boundary, there exists
a $T_{0}>0$ such that for all $T>T_{0}$, 
\[
\left(1-\theta\right)C_{0}\textrm{Vol}\left(R\right)T^{d-s-2}\leq\left|X_{Q}^{a}\left(\mathbb{Z}\right)\cap V_{M}\left(R\right)\cap A\left(0,T\right)\right|\leq\left(1-\theta\right)C_{0}\textrm{Vol}\left(R\right)T^{d-s-2}.
\]
\end{thm}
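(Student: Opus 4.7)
The plan is to translate the counting problem into a dynamical one on a homogeneous space, following the Eskin--Margulis--Mozes strategy used for the quantitative Oppenheim conjecture. Let $H$ be the identity component of $SO(Q)$ and $\Gamma = H \cap SL_d(\mathbb{Z})$. By Borel--Harish-Chandra, $X_Q^a(\mathbb{Z})$ decomposes into finitely many $\Gamma$-orbits, so after fixing base points $v_0^{(1)},\ldots, v_0^{(k)}$, it suffices to count in a single orbit $\Gamma v_0 \subset H \cdot v_0 = X_Q^a(\mathbb{R})$. Set $H_M = \textrm{Stab}_H(M)$, which by condition \ref{enu:condition 2} of Theorem \ref{main theorem} is (up to a compact factor) isogenous to $SO(r_1,r_2)$ with $r_1 \geq 3$, hence semisimple and generated by unipotent one-parameter subgroups; condition \ref{enu:condition 3} guarantees that no rational closed proper subset of $H/\Gamma$ contains the base point while being invariant under $H_M$, so Ratner's theorem delivers genuine equidistribution of $H_M$-orbits in $H/\Gamma$.

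Next I would introduce on $H/\Gamma$ the counting function
\[
F_{R,T}(g\Gamma) = \#\bigl\{\gamma \in \Gamma/\Gamma_{v_0} : g\gamma v_0 \in V_M(R) \cap A(0,T)\bigr\},
\]
with $\Gamma_{v_0}$ the stabilizer of $v_0$ in $\Gamma$. The sought-after cardinality is essentially $F_{R,T}(e\Gamma)$, summed over the finitely many orbit representatives. Using the transitivity of $H_M$ on appropriate level sets of $M$, one rewrites $F_{R,T}(e\Gamma)$ up to a normalising factor as an average $\mu(B_T)^{-1}\int_{B_T} F_{R,T}(h\Gamma)\,d\mu(h)$ over a large piece $B_T \subset H_M$. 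Applying the ergodic theorem for unbounded functions from \prettyref{sec:Ergodic-theorems.} to $F_{R,T}$ then converts this orbit average into the space average $\int_{H/\Gamma} F_{R,T}\,dm_{H/\Gamma}$. A Siegel-type unfolding identifies the space average with $\textrm{Vol}(X_Q^a(\mathbb{R}) \cap V_M(R) \cap A(0,T))$, and a fibration over $R$ of the level sets $M^{-1}(m)$ shows this volume is asymptotic to $C_0\,\textrm{Vol}(R)\,T^{d-s-2}$ with $C_0$ as pinned down in the remark following Theorem \ref{main theorem}.

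The main obstacle is that $F_{R,T}$ is unbounded: a lattice deep in the cusp of $H/\Gamma$ can accumulate many vectors with small $M$-value and moderate norm, and Ratner's theorem is a statement about bounded continuous test functions only. The non-divergence estimates from \prettyref{sec:The-alpha-function}, constructed via the Benoist--Quint height function, are meant to dominate $F_{R,T}$ pointwise by a proper function whose dilated spherical averages remain under uniform control. This is what feeds into the ergodic theorem of \prettyref{sec:Ergodic-theorems.} and allows one to exchange the limit in $T$ with the averaging over $B_T$; the failure of the analogous integrability is precisely what produces the $(2,2)$ and $(2,1)$ counterexamples mentioned after Theorem \ref{main theorem}.

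Finally, to sharpen the ergodic equidistribution into the two-sided $(1 \pm \theta)$-bound, I would use an Eskin--Margulis--Mozes style sandwich: approximate $\chi_{V_M(R) \cap A(0,T)}$ from above and below by smooth functions supported on slight dilations and contractions of $V_M(R)$, apply the ergodic estimate to the corresponding $F^{\pm}_{R,T}$, and let the approximation parameter shrink after $T \to \infty$. The piecewise smoothness of $\partial R$ ensures that the two-sided boundary error is of strictly smaller order than $T^{d-s-2}$, so the sandwich yields the $(1 \pm \theta)$-asymptotic of Theorem \ref{main theorem-1} for all $T > T_0$.
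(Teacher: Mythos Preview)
Your proposal captures the correct high-level architecture --- the EMM framework, the Siegel unfolding (Lemma \ref{siegel mod}), the ergodic theorem for unbounded functions (Theorem \ref{thm:ergodic result-1}), and the final sandwich argument --- and this is indeed the route the paper takes. However, the weakest link is your second paragraph, specifically the sentence ``Using the transitivity of $H_M$ on appropriate level sets of $M$, one rewrites $F_{R,T}(e\Gamma)$ up to a normalising factor as an average $\mu(B_T)^{-1}\int_{B_T} F_{R,T}(h\Gamma)\,d\mu(h)$.'' This step does not work as stated: $H_M$ preserves the level sets of $M$ but not the norm ball $A(0,T)$, so there is no reason $F_{R,T}(h\Gamma)$ for $h$ in a large piece of $H_M$ should approximate $F_{R,T}(e\Gamma)$, and no such identity is available.

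The paper's mechanism for this step is different and is really the heart of the argument. One does not average the $T$-dependent counting function directly; instead one fixes a compactly supported $f$ on $\mathbb{R}^d$, forms the Siegel transform $F_{f,g}(x) = \sum_{v \in X_g(\mathbb{Z})} f(xv)$, and studies the spherical average $\int_{K_g} F_{f,g}(\hat{a}_t k)\,d\nu_g(k)$ along a specific one-parameter diagonal subgroup $\hat{a}_t \subset H_g$. The key analytic input (Lemma \ref{lemma about evctors}) is that for an individual vector $v$, the $K_g$-average $\int_{K_g} f(\hat{a}_t k v)\,d\nu_g(k)$ is asymptotically a function $J_{f,g}(M(v), \|v\|e^{-t})$ depending only on $(M(v), \|v\|/e^t)$. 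One then engineers $f$ --- via a Stone--Weierstrass argument on the algebra of $J_{f,g}$'s (Lemma \ref{lem:stone weirstarss}) --- so that summing over $v$ approximates the count in a dyadic shell $V_M(R)\cap A(e^t,2e^t)$. Theorem \ref{thm:ergodic result-1} is applied to the \emph{fixed} test function $F_{f,g}$, Lemma \ref{lem:Link} matches its space average to the volume, and the full count follows by summing a geometric series over dyadic scales. Once this replaces the ``average over $B_T$'' step, your outline coincides with the paper's proof.
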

\begin{rem}
As remarked previously, the cases when $r_{1}=2$ and $r_{2}=2$ or
$r_{1}=2$ and $r_{2}=1$ are interesting. In dimensions 3 and 4 there
can be more integer points than expected lying on some surfaces defined
by quadratic forms of signature $\left(2,2\right)$ or $\left(2,1\right)$,
this means that the statement of Theorem \ref{main theorem-1} fails
for certain pairs. In Section \ref{sec:Counterexamples} these counterexamples
are explicitly constructed. Moreover, it is shown that this set of
pairs is big in the sense that it is of second category. We note that
as in \cite{MR1609447} one could also show that this set has measure
zero and one could prove the expected asymptotic formula as in Theorem
\ref{main theorem-1} for almost all pairs.
\end{rem}
Even though Theorem \ref{main theorem-1} fails when $r_{1}=2$ and
$r_{2}=2$ or $r_{1}=2$ and $r_{2}=1$, we do have the following
uniform upper bound, which will be proved in Section \ref{sec:Proof-of-Theorem}
and is analogous to Theorem 2.3 from \cite{MR1609447}. 
\begin{thm}
\label{prop:crude upper bound} Let $R\subset\mathbb{R}^{s}$ be a
compact region with piecewise smooth boundary and $a\in\mathbb{Q}$.

\selectlanguage{english}%
    \renewcommand{\labelenumi}{(\Roman{enumi})}
\selectlanguage{british}%
\begin{enumerate}
\item If $r_{1}\geq3$ and $r_{2}\geq1$, then for all $\left(Q,M\right)\in\mathcal{\mathcal{C}_{\textrm{Pairs}}}\left(a,r_{1},r_{2}\right)$,
there exists a constant $C$ depending only on $\left(Q,M\right)$
and $R$ such that for all $T>1$,
\[
\left|X_{Q}^{a}\left(\mathbb{Z}\right)\cap V_{M}\left(R\right)\cap A\left(0,T\right)\right|\leq CT^{d-s-2}.
\]

\item If $r_{1}=2$ and $r_{2}=1$ or $r_{1}=r_{2}=2$ , then for all $\left(Q,M\right)\in\mathcal{\mathcal{C}_{\textrm{Pairs}}}\left(a,r_{1},r_{2}\right)$
there exists a constant $C$ depending only on $\left(Q,M\right)$
and $R$ such that for all $T>2$,
\[
\left|X_{Q}^{a}\left(\mathbb{Z}\right)\cap V_{M}\left(R\right)\cap A\left(0,T\right)\right|\leq C\left(\log T\right)T^{d-s-2}.
\]

\end{enumerate}
\end{thm}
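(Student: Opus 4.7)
The plan is to prove both parts of Theorem \ref{prop:crude upper bound} by following the template of Theorem 2.3 in \cite{MR1609447}, adapted to the joint constraint coming from the pair $(Q,M)$ rather than a single quadratic form.

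First I would move the counting problem onto a homogeneous space. Put $H=SO(Q)$ and $\Gamma_H=H\cap\mathrm{SL}_d(\mathbb{Z})$, and fix an integer point $v_0\in X_Q^a(\mathbb{Z})$, which exists by the standing assumption on $a$. Since $Q$ is indefinite and $d\ge 2s+1$, $H$ acts transitively on $X_Q^a(\mathbb{R})$, so $X_Q^a(\mathbb{R})\simeq H_{v_0}\backslash H$, where $H_{v_0}$ is essentially $SO(Q|_{v_0^{\perp}})$; under either hypothesis on $(r_1,r_2)$ this stabiliser is semisimple of real rank at least one. The theorem of Borel--Harish-Chandra decomposes $X_Q^a(\mathbb{Z})$ into finitely many $\Gamma_H$-orbits, so it suffices to bound $|\Gamma_H v_0\cap V_M(R)\cap A(0,T)|$ for a single $v_0$.

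Second I would convert that count into a spherical integral over a maximal compact $K\subset H$. After smoothly approximating $\chi_R$ from above by a bump $\phi$ and $\chi_{A(0,T)}$ by a cut-off $\psi_T$, one associates to the pair $(\phi,\psi_T)$ a Siegel-type function $F_T$ on the auxiliary space $\mathrm{SL}_d(\mathbb{R})/\mathrm{SL}_d(\mathbb{Z})$, obtained by summing the test function over a lattice. A standard polar decomposition of $H_{v_0}\backslash H$ with respect to $K$ and to the one-parameter subgroup $\{a_t\}\subset H$ that dilates the image of $v_0$ then yields an inequality of the shape
\[
\bigl|\Gamma_H v_0\cap V_M(R)\cap A(0,T)\bigr|\;\le\;C\,T^{d-s-2}\int_K F_1\!\left(a_{\log T}\,k\cdot x_0\right)dk,
\]
where $x_0$ represents $\mathbb{Z}^d$ in the auxiliary space. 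The weight $T^{d-s-2}$ appears as the Jacobian of the polar change of variables combined with the $s$-dimensional constraint $M(v)\in R$.

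Third I would bound the remaining spherical average by appealing to the non-divergence estimates proved in Section \ref{sec:The-alpha-function}, which control $F_1$ pointwise in terms of the Benoist--Quint-type function of \cite{MR2874934}. In case (I) the applicable estimate is contractive, of the form $\int_K\alpha(a_t k\cdot x)\,dk\le\lambda(t)\alpha(x)+c$ with $\lambda(t)\to 0$, and iterating along a dyadic decomposition of $[0,\log T]$ gives a bound on the spherical integral that is uniform in $T$, producing $CT^{d-s-2}$. In the borderline signatures of case (II) the same estimate is only marginal: the iteration loses a constant at each of the $O(\log T)$ dyadic scales, and these losses accumulate to give one extra factor of $\log T$, hence $C(\log T)T^{d-s-2}$.

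The main obstacle I expect is executing the second step cleanly: one must show that the smoothed counting sum is dominated pointwise, up to a uniform constant and with no hidden growth in $T$, by a spherical average of $F_1$ at the fixed point $x_0$. This requires tracking how the affine constraint $M(v)\in R$ interacts with the $H_{v_0}$-direction of the polar decomposition and absorbing the smoothing errors from $\phi$ and $\psi_T$ at both ends. Condition \ref{enu:condition 3} of Theorem \ref{main theorem} is essential here: the non-rationality of every non-trivial linear combination of $L_1,\ldots,L_s$ rules out concentration of the $\Gamma_H$-orbit on a proper rational subvariety of $X_Q^a$, which is what makes the comparison with $F_1$ uniform and allows the non-divergence estimate to be applied at the single base point $x_0$.
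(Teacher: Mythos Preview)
Your outline has the right shape---reduce the count to a spherical average bounded by $\alpha$, then invoke Section~\ref{sec:The-alpha-function}---but the group you average over is wrong, and this is not cosmetic. You work with $K\subset H=SO(Q)$ and a polar decomposition of $H_{v_0}\backslash H$. The paper instead averages over $K_g\subset H_g=\mathrm{Stab}_{SO(Q)}(M)\cong SO(r_1,r_2)$, and this choice is forced: the non-divergence estimates of Theorem~\ref{thm:upper bound-1} are proved for $K_I\subset H_I\cong SO(r_1,r_2)$, not for a maximal compact of $SO(p,d-p)$, so with your setup you cannot invoke them. More fundamentally, $K_g$ fixes $M$, so $M_0^g(kv)=M_0^g(v)$ for $k\in K_g$; this is what lets the linear constraint $M(v)\in R$ survive the averaging intact and is why the exponent comes out as $d-s-2$ rather than $d-2$.

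The device that makes this work---and which your second step replaces by a vague Jacobian argument---is Lemma~\ref{lemma about evctors}. One picks $f\in C_c(\mathbb{R}^d_+)$ with $J_{f,g}\ge 1+\epsilon$ on $R\times[1,2]$ (possible by Lemma~\ref{lem:stone weirstarss}); then Lemma~\ref{lemma about evctors} gives, for each $v$ with $e^t\le\|v\|\le 2e^t$, $M_0^g(v)\in R$, $Q_0^g(v)=a$, the pointwise inequality $C_1 e^{(d-s-2)t}\int_{K_g}f(\hat a_t kv)\,d\nu_g(k)\ge 1$. Summing over $v\in X_g(\mathbb{Z})$ yields the dyadic bound $|X_g(\mathbb{Z})\cap V_M(R)\cap A(e^t,2e^t)|\le C_1 e^{(d-s-2)t}\int_{K_g}F_{f,g}(\hat a_t k)\,d\nu_g(k)$, and now Theorem~\ref{thm:upper bound-1} applies directly: in case (I) the right-hand integral is uniformly bounded in $t$, in case (II) it is $O(t)$. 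The final step is a \emph{spatial} dyadic decomposition $A(0,T)=\bigcup_i A(T/2^i,T/2^{i-1})$ and a geometric sum---not the temporal iteration you describe, which already sits inside the proof of Theorem~\ref{thm:upper bound-1} via Proposition~\ref{prop:Let--be}. Finally, condition~\ref{enu:condition 3} enters through Lemma~\ref{lem:fixed vectors} (no $H_I$-fixed $\Delta$-integral vectors), which is needed to make the Benoist--Quint function finite at the base point; it is not what controls the conversion to a spherical integral.
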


\subsection{A canonical form. }

For $v_{1},v_{2}\in\mathbb{R}^{d}$ we will use the notation $\left\langle v_{1},v_{2}\right\rangle $
to denote the standard inner product in $\mathbb{R}^{d}$. For a set
of vectors $v_{1},\dots,v_{i}\in\mathbb{R}^{d}$ we will also use
the notation $\left\langle v_{1},\dots,v_{i}\right\rangle $ to denote
the span of $v_{1},\dots,v_{i}$ in $\mathbb{R}^{d}$, although this
could lead to some ambiguity, the meaning of the notation should be
clear from the context. 

For some computations it will be convenient to know that our system
is conjugate to a canonical form. Let $e_{1},\dots,e_{d}$ be the
standard basis of $\mathbb{R}^{d}$. Let $\left(Q_{0},M_{0}\right)$
be the pair consisting of a quadratic form and a linear map defined
by 
\[
Q_{0}\left(v\right)=Q_{1,\ldots,s}\left(v\right)+2v_{s+1}v_{d}+\sum_{i=s+2}^{s+r_{1}}v_{i}^{2}-\sum_{i=s+r_{1}+1}^{d-1}v_{i}^{2}\quad\textrm{ and }\quad M_{0}\left(v\right)=\left(v_{1},\ldots,v_{s}\right),
\]
where $v_{i}=\left\langle v,e_{i}\right\rangle $ and $Q_{1,\ldots,s}\left(v\right)$
is a non degenerate quadratic form in variables $v_{1},\ldots,v_{s}$.
By Lemma 2.2 of \cite{2011arXiv1111.4428S} all pairs $\left(Q,M\right)$
such that $\textrm{rank}\left(Q|_{\ker\left(M\right)}\right)=d-s$
and the signature of $Q|_{\ker\left(M\right)}$ is $\left(r_{1},r_{2}\right)$
are equivalent to the pair $\left(Q_{0},M_{0}\right)$ in the sense
that there exist $g_{d}\in GL_{d}\left(\mathbb{R}\right)$ and $g_{s}\in GL_{s}\left(\mathbb{R}\right)$
such that $\left(Q,M\right)=\left(Q_{0}^{g_{d}},g_{s}M_{0}^{g_{d}}\right)$,
where for $g\in GL_{d}\left(\mathbb{R}\right)$ we write $Q=Q_{0}^{g}$
if and only if $Q_{0}\left(gv\right)=Q\left(v\right)$ for all $v\in\mathbb{R}^{d}$.
Moreover since $R\subset\mathbb{R}^{s}$ is arbitrary, up to rescaling
and possibly replacing $R$ by $g_{s}R$ we assume that $g_{d}\in SL_{d}\left(\mathbb{R}\right)$
and that $g_{s}$ is the identity. Let 
\[
\mathcal{C}_{\textrm{SL}}\left(a,r_{1},r_{2}\right)=\left\{ g\in SL_{d}\left(\mathbb{R}\right):\left(Q_{0}^{g},M_{0}^{g}\right)\in\mathcal{\mathcal{C}_{\textrm{Pairs}}}\left(a,r_{1},r_{2}\right)\right\} .
\]
For $g\in\mathcal{C}_{\textrm{SL}}\left(a,r_{1},r_{2}\right)$, let
$G_{g}$ be the identity component of the group $\left\{ x\in SL_{d}\left(\mathbb{R}\right):Q_{0}^{g}\left(xv\right)=Q_{0}^{g}\left(v\right)\right\} $,
$\Gamma_{g}=G_{g}\cap SL_{d}\left(\mathbb{Z}\right)$, $H_{g}=\left\{ x\in G_{g}:M_{0}^{g}\left(xv\right)=M_{0}^{g}\left(v\right)\right\} $
and $K_{g}=H_{g}\cap g^{-1}O_{d}\left(\mathbb{R}\right)g$. By examining
the description of the subgroup $H_{g}$, given in Section 2.3 of
\cite{2011arXiv1111.4428S} it is clear that $K_{g}$ is a maximal
compact subgroup of $H_{g}$. It is a standard fact that $G_{g}$
is a connected semisimple Lie group and hence, has no nontrivial rational
characters. Therefore, because $Q_{0}^{g}$ is defined over $\mathbb{Q}$,
the Borel Harish-Chandra Theorem (cf. \cite{MR1278263}, Theorem 4.13)
implies $\Gamma_{g}$ is a lattice in $G_{g}$. We will consider the
dynamical system that arises from $H_{g}$ acting on $G_{g}/\Gamma_{g}$.
For $\mathbb{K}=\mathbb{R}$ or $\mathbb{Z}$, the shorthand $X_{Q_{0}^{g}}^{a}\left(\mathbb{K}\right)=X_{g}\left(\mathbb{K}\right)$
will be used.

\subsection{Equidistribution of measures\label{sub:The alpha func}}

Consider the function $\alpha$, as defined in \cite{MR1609447}.
It is an unbounded function on the space of unimodular lattices in
$\mathbb{R}^{d}$. It has the properties that it can be used to bound
certain functions that we will consider and it is left $K_{I}$ invariant.
Similar functions have been considered in \cite{MR1314965} where
it is related to various quantities involving successive minima of
a lattice. Let $\Delta$ be a lattice in $\mathbb{R}^{d}$. For any
such $\Delta$ we say that a subspace $U$ of $\mathbb{R}^{d}$ is
$\Delta$-rational if $\textrm{Vol}\left(U/U\cap\Delta\right)<\infty$.
Let 
\[
\Psi_{i}\left(\Delta\right)=\left\{ U:U\textrm{ is a }\Delta\textrm{-rational subspace of }\mathbb{R}^{d}\textrm{ with }\dim U=i\right\} .
\]
For $U\in\Psi_{i}\left(\Delta\right)$ define $d_{\Delta}\left(U\right)=\textrm{Vol}\left(U/U\cap\Delta\right)$.
Note that $d_{\Delta}\left(U\right)=\left\Vert u_{1}\wedge\ldots\wedge u_{i}\right\Vert $
where $u_{1},\ldots,u_{i}$ is a basis for $U\cap\Delta$ over $\mathbb{Z}$
and the norm on $\bigwedge^{i}\left(\mathbb{R}^{d}\right)$ is induced
from the euclidean norm on $\mathbb{R}^{d}$. Now we recall the definition
the function $\alpha$, as follows 
\[
\alpha_{i}\left(\Delta\right)=\sup_{U\in\Psi_{i}\left(\Delta\right)}\frac{1}{d_{\Delta}\left(U\right)}\quad\textrm{and}\quad\alpha\left(\Delta\right)=\max_{0\leq i\leq d}\alpha_{i}\left(\Delta\right).
\]
Here we use the convention that if $U$ is the trivial subspace then
$d_{\Delta}\left(U\right)=1$, hence $\alpha_{0}\left(\Delta\right)=1$.
Also note that if $\Delta$ is a unimodular lattice then $d_{\Delta}\left(\mathbb{R}^{d}\right)=1$
and hence $\alpha_{d}\left(\Delta\right)=1$. 

In \eqref{eq:upper bound for function} and Theorem \ref{thm:ergodic result-1}
we consider $\alpha$ as a function on $G_{g}/\Gamma_{g}$, this is
done via the canonical embedding of $G_{g}/\Gamma_{g}$ into the space
of unimodular lattices in $\mathbb{R}^{d}$, given by $x\Gamma_{g}\rightarrow x\mathbb{Z}^{d}$.
Specifically, every $x\in G_{g}/\Gamma_{g}$ can be identified with
its image under this embedding before applying $\alpha$ to it. For
$f\in C_{c}\left(\mathbb{R}^{d}\right)$ and $g\in\mathcal{C}_{\textrm{SL}}\left(r_{1},r_{2}\right)$
we define the function $F_{f,g}:G_{g}/\Gamma_{g}\rightarrow\mathbb{R}$
by 
\begin{equation}
F_{f,g}\left(x\right)=\sum_{v\in X_{g}\left(\mathbb{Z}\right)}f\left(xv\right).\label{eq:def of F}
\end{equation}
The function $\alpha$ has the property that there exists a constant
$c\left(f\right)$ depending only on the support and maximum of $f$
such that for all $x$ in $G_{g}/\Gamma_{g}$,
\begin{equation}
F_{f,g}\left(x\right)\leq c\left(f\right)\alpha\left(x\right).\label{eq:upper bound for function}
\end{equation}
The last property is well known and follows from Minkowski's Theorem
on successive minima, see Lemma 2 of \cite{MR0224562} for example.
Alternatively, see \cite{MR2454302} for an up to date review of many
related results. 

We will be carrying out integration on various measure spaces defined
by the groups introduced at the beginning of the section. With this
in mind let us introduce the following notation for the corresponding
measures. If $v$ denotes some variable, the notation $dv$ is used
to denote integration with respect to Lebesgue measure and this variable.
Let $\mu_{g}$ be the Haar measure on $G_{g}/\Gamma_{g}$, if $g\in\mathcal{C}_{\textrm{SL}}\left(r_{1},r_{2}\right)$
then since $\Gamma_{g}$ is a lattice in $G_{g}$ we can normalise
so that $\mu_{g}\left(G_{g}/\Gamma_{g}\right)=1$. In addition, $\nu_{g}$
will denote the measure on $K_{g}$ normalised so that $\nu_{g}\left(K_{g}\right)=1$.
Let $m_{g}^{a}$ denote the Haar measure on $X_{g}^{a}\left(\mathbb{R}\right)$
defined by 
\begin{equation}
\int_{\mathbb{R}^{d}}f\left(v\right)dv=\int_{-\infty}^{\infty}\int_{X_{g}^{a}\left(\mathbb{R}\right)}f\left(v\right)dm_{g}^{a}\left(v\right)da.\label{eq:def of m_0}
\end{equation}
The following Theorem provides us with our upper bounds and will be
proved in Section \ref{sec:The-alpha-function}. 
\begin{thm}
\label{thm:upper bound-1}Let $g\in\mathcal{C}_{\textrm{SL}}\left(r_{1},r_{2}\right)$
be arbitrary and let $\Delta=g\mathbb{Z}^{d}$. Let $\left\{ a_{t}:t\in\mathbb{R}\right\} $
denote a self adjoint one parameter subgroup of $SO\left(2,1\right)$
embedded into $H_{I}$ so that it fixes the subspace $\left\langle e_{s+2},\ldots,e_{d-1}\right\rangle $
and only has eigenvalues $e^{-t},$ $1$ and $e^{t}$. 

\selectlanguage{english}%
    \renewcommand{\labelenumi}{(\Roman{enumi})}
\selectlanguage{british}%
\begin{enumerate}
\item Suppose $r_{1}\geq3$ , $r_{2}\geq1$ and $0<\delta<2$, then
\[
\sup_{t>0}\int_{K_{I}}\alpha\left(a_{t}k\Delta\right)^{\delta}d\nu_{I}\left(k\right)<\infty.
\]

\item Suppose $r_{1}=r_{2}=2$ or $r_{1}=2$, $r_{2}=1$, then
\[
\sup_{t>1}\frac{1}{t}\int_{K_{I}}\alpha\left(a_{t}k\Delta\right)d\nu_{I}\left(k\right)<\infty.
\]

\end{enumerate}
\end{thm}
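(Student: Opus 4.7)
The plan is to adapt the Eskin--Margulis--Mozes strategy from \cite{MR1609447} for bounding spherical averages of $\alpha$. Since $\alpha \leq \sum_{i=0}^{d}\alpha_{i}$, it suffices to bound $\int_{K_{I}}\alpha_{i}(a_{t}k\Delta)^{\delta}\,d\nu_{I}(k)$ uniformly in $t > 0$ for each $1 \leq i \leq d-1$. Writing a $\Delta$-rational subspace $U \in \Psi_{i}(\Delta)$ as a pure wedge $v_{U} = u_{1} \wedge \cdots \wedge u_{i}$ with $u_{1},\ldots,u_{i}$ a $\mathbb{Z}$-basis of $U \cap \Delta$, one has $d_{\Delta}(U) = \|v_{U}\|$, and the whole problem reduces to controlling the averaged behaviour of $a_{t}k$ on $\bigwedge^{i}\mathbb{R}^{d}$.

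The technical heart of the argument will be a single-vector averaging inequality on exterior powers: there exist constants $C>0$ and $0<\rho<1$, depending on $\delta$, $r_{1}$, $r_{2}$, $i$, such that for every nonzero $v \in \bigwedge^{i}\mathbb{R}^{d}$ and all $t \geq t_{0}$,
\[
\int_{K_{I}} \|a_{t}k v\|^{-\delta}\,d\nu_{I}(k) \leq \rho\|v\|^{-\delta},
\]
valid for every $0 < \delta < 2$ under condition \textup{(I)}, with the weakened version obtained by replacing $\rho$ by a factor of order $t$ at $\delta = 1$ under the borderline conditions \textup{(II)}. To prove this, I would decompose $\bigwedge^{i}\mathbb{R}^{d}$ into weight spaces for the self-adjoint element $a_{t}$, which has eigenvalues $e^{-t},1,e^{t}$ on $\mathbb{R}^{d}$, and analyse the $K_{I} \cong SO(r_{1})\times SO(r_{2})$ averaging on each weight space in polar coordinates. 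The relevant one-dimensional integrals carry Jacobian factors that vanish at the origin at rates determined by $r_{1}$ and $r_{2}$, and the hypothesis $r_{1} \geq 3$, $r_{2} \geq 1$ is precisely what makes them integrable up to $\delta < 2$; in signatures $(2,2)$ and $(2,1)$ integrability fails at $\delta = 2$, producing the linear-in-$t$ loss.

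Passing from the single-subspace estimate to a recursive inequality on the supremum $\alpha_{i}$ is handled by the combinatorial argument of \cite{MR1609447}: two distinct $\Delta$-rational subspaces of dimension $i$ cannot simultaneously have arbitrarily tiny discriminants because their sum and intersection are again rational and the discriminants satisfy a submultiplicative relation via the Pl\"ucker embedding. This allows the supremum to be replaced by an effective sum over a controlled family and yields a contractive recursion
\[
\int_{K_{I}}\alpha_{i}(a_{t}k\Delta)^{\delta}\,d\nu_{I}(k) \leq \rho\,\alpha_{i}(\Delta)^{\delta} + C',
\]
from which iterating in $t$ and then taking the supremum produces the uniform bound (respectively, linear growth in the exceptional cases). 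The principal obstacle is the sharpness of the exponent $\delta < 2$ in the exterior power estimate; here the Benoist--Quint framework from \cite{MR2874934} is essential, as it packages the weighted spherical integrals into a single auxiliary function whose decay rate can be computed as a Lyapunov-type exponent that vanishes exactly in signatures $(2,2)$ and $(2,1)$, explaining both the logarithmic loss in part \textup{(II)} and the counterexamples to Theorem~\ref{main theorem} in those dimensions.
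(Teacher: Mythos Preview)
Your proposal has the right skeleton (single-vector spherical estimate, then a combinatorial reduction from $\alpha_i$ to finitely many near-minimizers, then iteration), and the combinatorial step is essentially the paper's Claim \ref{claim}. The gap is in the single-vector inequality itself.

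You assert that for every nonzero $v\in\bigwedge^{i}\mathbb{R}^{d}$ one has $\int_{K_I}\|a_t k v\|^{-\delta}\,d\nu_I(k)\le\rho\|v\|^{-\delta}$ with $\rho<1$. This is false when $v$ is $H_I$-fixed: then $a_t k v=v$ for all $k,t$, so the integral equals $\|v\|^{-\delta}$ and no strict contraction is possible. The $H_I$-fixed subspace of $\bigwedge^{i}\mathbb{R}^{d}$ is nontrivial (wedges coming from $\langle e_1,\dots,e_s\rangle$ and from $\langle e_{s+1},\dots,e_d\rangle$), so this is not a vacuous objection. More seriously, even for non-fixed $v$ the contraction constant degenerates to $1$ as $v$ approaches the fixed space, so one cannot simply exclude the fixed vectors and proceed.

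The paper resolves this with two ingredients you only gesture at. First, the Benoist--Quint function $\varphi_\epsilon$ is not merely a packaging device: it separates the $H_I$-fixed component $\tau_0(v)$ from the rest via a threshold $\|\tau_0(v)\|\le\epsilon^{\gamma_i}$, so that the contraction estimate (Lemma \ref{lem:r1>3r2>1}) is only ever applied to $v-\tau_0(v)\in\mathcal{F}^c$, where Lemma \ref{lem:5,2} does give uniform decay. Second, and this is where the arithmetic hypothesis on $g$ enters, Lemma \ref{lem:fixed vectors} shows that no $\Delta$-integral monomial is $H_I$-fixed; this uses condition \ref{enu:condition 3} of Theorem \ref{main theorem} (irrationality of the linear forms) and is what guarantees $\widetilde f_{\Delta,\epsilon}(1)<\infty$ and that every $\psi\in\Psi(h\Delta)$ has nonzero projection onto $\mathcal{F}^c$. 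Without these two pieces the iteration cannot start. Your treatment of Part \textup{(II)} is closer to correct because there only non-increase (Lemma \ref{lem:derivative bound}) is needed, and that does hold for all $v$; but for Part \textup{(I)} you must explain how the fixed space is handled.
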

In Section \ref{sec:Ergodic-theorems.} we will modify the results
from Section 4 of \cite{MR1609447} and combine them with Theorem
\ref{thm:upper bound-1} to prove the following Theorem which will
be a major ingredient of the proof of Theorem \ref{main theorem-1}. 
\begin{thm}
\label{thm:ergodic result-1}Suppose $r_{1}\geq3$ and $r_{2}\geq1$.
Let $A=\left\{ a_{t}:t\in\mathbb{R}\right\} $ be a one parameter
subgroup of $H_{g}$, not contained in any proper normal subgroup
of $H_{g}$, such that there exists a continuous homomorphism $\rho:SL_{2}\left(\mathbb{R}\right)\rightarrow H_{g}$
with $\rho\left(D\right)=A$ and $\rho\left(SO\left(2\right)\right)\subset K_{g}$
where $D=\left\{ \left(\begin{smallmatrix}t & 0\\
0 & t^{-1}
\end{smallmatrix}\right):t>0\right\} $. Let $\phi\in L^{1}\left(G_{g}/\Gamma_{g}\right)$ be a continuous
function such that for some $0<\delta<2$ and some $C>0$, 
\begin{equation}
\left|\phi\left(\Delta\right)\right|<C\alpha\left(\triangle\right)^{\delta},\textrm{ for all }\Delta\in G_{g}/\Gamma_{g}.\label{eq:bound in thm 2.4}
\end{equation}
Then for all $\epsilon>0$ and all $g\in\mathcal{C}_{\textrm{SL}}\left(r_{1},r_{2}\right)$
there exists $T_{0}>0$ such that for all $t>T_{0}$, 
\[
\left|\int_{K_{g}}\phi\left(a_{t}k\right)d\nu_{g}\left(k\right)-\int_{G_{g}/\Gamma_{g}}\phi d\mu_{g}\right|\leq\epsilon.
\]

\end{thm}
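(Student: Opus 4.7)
The proof follows the scheme of Section 4 of \cite{MR1609447}. The idea is threefold: (i) replace the spherical average $\int_{K_g}\phi(a_tk)\,d\nu_g(k)$ by an average along a long unipotent trajectory, (ii) truncate $\phi$ at a level set of $\alpha$, and (iii) combine Ratner's equidistribution theorem for the truncated piece with the uniform tail estimate from Theorem \ref{thm:upper bound-1}(I).

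For step (i), set $u(s)=\rho\bigl(\begin{smallmatrix}1&s\\0&1\end{smallmatrix}\bigr)$ and $k_\theta=\rho\bigl(\begin{smallmatrix}\cos\theta&\sin\theta\\-\sin\theta&\cos\theta\end{smallmatrix}\bigr)$. Using the Iwasawa decomposition inside $\rho(SL_2(\mathbb{R}))$ together with the identity
\[
a_t k_\theta = u\bigl(e^{2t}\sin\theta\cos\theta\bigr)\,a_{t+\log|\cos\theta|}\,k'_\theta,
\]
a change of variable $s=e^{2t}\sin\theta\cos\theta$ converts $\int_{\rho(SO(2))}\phi(a_tk_\theta x)\,d\theta$ into an integral $\int\phi(u(s)y_{t,s})\,\mu_t(ds)$, where $\mu_t$ concentrates on an interval of length $\sim e^{2t}/2$ and $y_{t,s}=a_{t(s)}x$ stays in a bounded set after a further $a$-translation. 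Since $\rho(SO(2))\subset K_g$, one decomposes $\nu_g$ as a fibre integral over $K_g/\rho(SO(2))$ and applies the above on each fibre, reducing the claim to equidistribution of long $u(s)$-orbits.

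For step (ii), fix $\delta'\in(\delta,2)$. Let $\chi_N$ be a continuous cutoff equal to $1$ on $\{\alpha\leq N\}$ and vanishing on $\{\alpha\geq 2N\}$, and put $\phi_N=\chi_N\phi$, so that $\phi_N$ is bounded and compactly supported while
\[
|\phi-\phi_N|\leq C\alpha^{\delta}\mathbb{1}_{\{\alpha>N\}}\leq CN^{\delta-\delta'}\alpha^{\delta'}.
\]
Theorem \ref{thm:upper bound-1}(I) (transferred from $K_I$ to $K_g$ via the fixed element $g$, which alters $\alpha$ only by a bounded multiplicative factor) yields
\[
\sup_{t>0}\int_{K_g}\alpha(a_tk)^{\delta'}\,d\nu_g(k)<\infty,
\]
and integrating this bound along the full $a$-ray (or applying the standard Siegel-type estimate) gives $\alpha^{\delta'}\in L^1(G_g/\Gamma_g,\mu_g)$. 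Hence both $\int_{K_g}|\phi-\phi_N|(a_tk)\,d\nu_g(k)$ and $\int|\phi-\phi_N|\,d\mu_g$ are bounded by $C'N^{\delta-\delta'}$, uniformly in $t$.

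For step (iii), the truncated $\phi_N$ is bounded and compactly supported. By step (i), its spherical average reduces to long-orbit averages of $u(s)$ from (a bounded set of) base points in $G_g/\Gamma_g$. The hypothesis that $A$ is not contained in any proper normal subgroup of $H_g$ forces $\rho(SL_2(\mathbb{R}))$ to project surjectively onto every simple factor of $H_g$; combined with Moore ergodicity for the $H_g$-action on $G_g/\Gamma_g$ and Ratner's equidistribution theorem for $u(s)$, one concludes that for every base point the $u(s)$-orbit equidistributes to $\mu_g$. Uniform convergence on the compact family of base points produced in step (i) then gives, for some $T_0$ depending on $\epsilon$ and $N$,
\[
\Bigl|\int_{K_g}\phi_N(a_tk)\,d\nu_g(k)-\int\phi_N\,d\mu_g\Bigr|<\epsilon/3\quad(t>T_0).
\]
Choosing first $N$ so that both tails are $<\epsilon/3$, then $T_0$ as above, the triangle inequality completes the proof.

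The main obstacle is step (iii): Ratner's theorem only asserts equidistribution on the orbit closure, and to conclude density in $G_g/\Gamma_g$ for the relevant base points one must combine the non-normality hypothesis on $A$ with the irreducibility of $\Gamma_g$ (which ultimately follows from the irrationality condition \ref{enu:condition 3}) to rule out intermediate closed subgroups. The step also requires uniformity of Ratner's theorem across a compact family of base points, which is obtained from the linearisation method of Dani--Margulis precisely as in Section 4 of \cite{MR1609447}. Steps (i) and (ii) are standard once Theorem \ref{thm:upper bound-1} is in hand.
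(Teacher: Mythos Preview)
Your outline is essentially the paper's proof: truncate $\phi$ at a level set of $\alpha$, control the tail uniformly in $t$ via Theorem~\ref{thm:upper bound-1}(I), and handle the compactly supported piece by converting the $a_t$-spherical average into a long unipotent average through the $SL_2$-decomposition, then invoking the Dani--Margulis uniform Ratner theorem. The paper packages the last two steps as Lemma~\ref{cor:erg coro} (proved via Lemmas~\ref{thm:4.4} and~\ref{cor:erg coro-1}), but the content is the same.

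Two points where your write-up is looser than the paper's argument. First, for the $\mu_g$-side of the tail you assert $\alpha^{\delta'}\in L^1(G_g/\Gamma_g)$ and justify it by ``integrating along the full $a$-ray''; that justification does not make sense (the $K_g\times\mathbb{R}_{\geq 0}$-orbit of the identity need not sweep out $G_g/\Gamma_g$ with the right measure). The statement itself is true by reduction theory, but the paper avoids it entirely: since $\phi\in L^1$ is already assumed and $\mu_g(\{\alpha>r\})\to 0$, dominated convergence gives $\int\phi g_r\,d\mu_g\to 0$ directly. Second, in step~(iii) you say the $u(s)$-orbit equidistributes ``for every base point''; this is false in general. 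The Dani--Margulis theorem only gives equidistribution off the tubes $\bigcup_i C_i\Gamma_g/\Gamma_g$, and the crux of the paper's Lemma~\ref{thm:4.4} is showing that $K_g$ meets these tubes in a $\nu_g$-null set, which is exactly where the irrationality condition~\ref{enu:condition 3} (via the results of \cite{2011arXiv1111.4428S}) enters. You acknowledge this in your final paragraph, so the gap is in the statement of step~(iii) rather than in your understanding, but the proof as written in step~(iii) does not go through without this correction.
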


\begin{rem}
The condition that $A$ should not be contained in any proper normal
subgroup of $H_{g}$ is only necessary in the case when $H_{g}\cong SO\left(2,2\right)$,
since in all other cases $H_{g}$ is simple. 
\end{rem}

\section{The upper bounds.\label{sec:The-alpha-function}}

In this section we prove Theorem \ref{thm:upper bound-1}. By definition
$H_{I}\cong SO\left(r_{1},r_{2}\right)$ and is embedded in $SL_{d}\left(\mathbb{R}\right)$
so that it fixes $\left\langle e_{1},\ldots,e_{s}\right\rangle $.
Let $\left\{ a_{t}:t\in\mathbb{R}\right\} $ denote a self adjoint
one parameter subgroup of $SO\left(2,1\right)$ embedded into $H_{I}$
so that it fixes the subspace $\left\langle e_{s+2},\ldots,e_{d-1}\right\rangle $.
Moreover, suppose that the only eigenvalues of $a_{t}$ are $e^{-t},1$
and $e^{t}$. For $g\in\mathcal{C}_{\textrm{SL}}\left(r_{1},r_{2}\right)$,
let $\Delta=g\mathbb{Z}^{d}$.

\subsection{Proof of part I of Theorem \ref{thm:upper bound-1}.}

The aim is to construct a function $f:H_{I}\to\mathbb{R}$ which is
contracted by the operator 
\[
A_{t}f\left(h\right)=\int_{K_{I}}f\left(a_{t}kh\right)d\nu_{I}\left(k\right).
\]
We say that $f$ is contracted by the operator $A_{t}$ if for any
$c>0$ there exists $t_{0}>0$ and $b>0$ such that for all $h\in H_{I}$,
\[
A_{t_{0}}f\left(h\right)<cf\left(h\right)+b.
\]
This fact will be used in conjunction with Proposition 5.12 from \cite{MR1609447}
which is stated below.
\begin{prop}
\label{prop:Let--be}Let $f:H_{I}\to\mathbb{R}$ be a strictly positive
function such that:
\begin{enumerate}
\item \label{enu:contraction 1}For any $\epsilon>0$ there exists a neighbourhood
$V\left(\epsilon\right)$ of $1$ in $H_{I}$ such that 
\[
\left(1-\epsilon\right)f\left(h\right)\leq f\left(uh\right)\leq\left(1+\epsilon\right)f\left(h\right)
\]
for all $h\in H_{I}$ and $u\in V\left(\epsilon\right)$.
\item \label{enu:contraction 2}The function $f$ is left $K_{I}$ invariant. 
\item \label{enu:contraction 3}$f\left(1\right)<\infty$. 
\item \label{enu:contraction 4}The function $f$ is contracted by the operator
$A_{t}$.
\end{enumerate}
Then \textup{$\sup_{t>0}A_{t}f\left(1\right)<\infty$}.
\end{prop}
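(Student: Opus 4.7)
The plan is to deduce boundedness of $A_t f(1)$ by iterating the contraction inequality from (4), with the uniform continuity (1) and $K_I$-invariance (2) bridging between the iterated operator $A_{t_0}^n$ and the spherical average $A_t$. First I would fix, using (4), a ratio $c \in (0,1)$, a time $t_0 > 0$, and a constant $b > 0$ such that $A_{t_0} f(h) \le c f(h) + b$ for all $h \in H_I$. Since $\nu_I(K_I) = 1$, the operator $A_{t_0}$ is positive and fixes constants, so iteration gives
\[
A_{t_0}^n f(h) \;\le\; c^n f(h) \;+\; \frac{b}{1-c}.
\]
Evaluating at $h = 1$ and invoking (3) then yields $\sup_{n \ge 0} A_{t_0}^n f(1) \le f(1) + b/(1-c) < \infty$.

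The key step would be a geometric comparison between the spherical average $A_t$ and the iterated average $A_{t_0}^n$. I would claim that there exist constants $C, D > 0$ such that for all sufficiently large $t$, with $n = \lfloor t/t_0 \rfloor$,
\[
A_t f(1) \;\le\; C \cdot A_{t_0}^n f(1) \;+\; D.
\]
In terms of measures on $H_I$, this asks that the pushforward $(a_t)_*\nu_I$, supported on the arc $a_t K_I$, be controlled by the $n$-fold convolution $\mu_0^{*n}$ of the probability measure $\mu_0 = (a_{t_0})_*\nu_I$. Because $\{a_t\}$ generates an $SO(2,1)$-subgroup of $H_I$, the $K_I A K_I$-decomposition reduces this to a planar estimate in the hyperbolic disc, where an $n$-fold spherical convolution dominates a single spherical measure of radius $nt_0$ up to a bounded multiplicative constant. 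The uniform continuity from (1) is used to absorb the small-scale discrepancy in the $K_I$-components arising from the two decompositions.

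Combining the two preceding steps would give boundedness of $A_t f(1)$ for all $t$ beyond some threshold $T^*$. For $t$ in the compact interval $[0, T^*]$ the argument is much simpler: fix $\delta > 0$ with $a_\delta \in V(\epsilon)$ (from (1)) and write $a_t = a_\delta^m \cdot a_r$ with $0 \le r < \delta$ and $m \le T^*/\delta$. Iterating (1) gives $f(a_t k) \le (1+\epsilon)^{m+1} f(k)$, which by (2) and (3) equals $(1+\epsilon)^{m+1} f(1)$; integration over $k \in K_I$ then bounds $A_t f(1)$ uniformly on $[0, T^*]$. Together these yield $\sup_{t > 0} A_t f(1) < \infty$.

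The principal obstacle is the geometric comparison in the second step. There is no semigroup identity $A_{t_0} A_t = A_{t_0+t}$, so a genuine estimate is needed to transfer control of the iterated random walk $A_{t_0}^n f$ to the one-parameter family $A_t f$. The argument must exploit the specific hyperbolic structure coming from $\{a_t\} \subset SO(2,1) \subset H_I$, and condition (1) is indispensable here to absorb the difference between the two measures being compared. By contrast, the iteration of (4) and the compact-interval estimate are essentially formal once the comparison is in hand.
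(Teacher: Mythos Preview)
The paper does not give its own proof of this proposition; it is quoted as Proposition~5.12 of Eskin--Margulis--Mozes \cite{MR1609447} and used as a black box. Your iteration of the contraction to bound $\sup_n A_{t_0}^{\,n} f(1)$ is correct, as is the treatment of $t$ in a bounded interval. The gap is in the central step: the proposed comparison $A_t f(1)\le C\,A_{t_0}^{\,n} f(1)+D$ with $n=\lfloor t/t_0\rfloor$ does not hold with a uniform constant. Read as a measure inequality on the hyperbolic disc, it asks that the $n$-fold convolution $\mu_0^{*n}$ dominate the spherical measure of radius $nt_0$. But $\mu_0^{*n}$ is the law of an $n$-step spherical random walk, whose radial part concentrates near $n\ell$ with drift $\ell$ strictly smaller than $t_0$; its mass near radius $nt_0$ is exponentially small in $n$, so no fixed $C$ works. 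Taking instead $n\approx t/\ell$ puts the walk at the correct radius, but the mass in any window of bounded width about $t$ is then only of order $n^{-1/2}$, and after inverting you obtain $A_t f(1)=O(\sqrt{t})$, not $O(1)$.

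The argument in \cite{MR1609447} avoids the iterate $A_{t_0}^{\,n}$ entirely and produces a \emph{one-step} recursion for $g(t):=A_t f(1)$ directly. Apply the contraction inequality at $h=a_t k$ and integrate over $K_I$:
\[
\int_{K_I}\int_{K_I} f\bigl(a_{t_0}k'\,a_t k\bigr)\,d\nu_I(k')\,d\nu_I(k)\;\le\;c\,g(t)+b.
\]
Using left $K_I$-invariance of $f$ together with the Cartan decomposition of $a_{t_0}k'a_t$ (carried out inside the copy of $SO(2,1)$ containing $\{a_t\}$, which is where the rank-one hyperbolic computation enters), the left-hand side rewrites as $\int g(s)\,d\sigma_t(s)$ for a probability measure $\sigma_t$ supported on $[\,|t-t_0|,\,t+t_0\,]$. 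A direct calculation shows that $\sigma_t\bigl([t+t_0-\delta,\,t+t_0]\bigr)\ge\gamma>0$ uniformly for large $t$, for any fixed $\delta>0$. Hypothesis~(1) makes $g$ slowly varying, so $g(s)\ge c_\delta\,g(t+t_0)$ on that window, giving $\gamma c_\delta\,g(t+t_0)\le c\,g(t)+b$. Since the definition of ``contracted'' allows $c$ to be taken arbitrarily small, choose $c<\gamma c_\delta$; then $g(t+t_0)\le c'\,g(t)+b'$ with $c'<1$, and boundedness of $g$ follows at once. The moral is that a single application of the contraction, analysed via $K_IAK_I$, already yields the needed recursion; passing to $A_{t_0}^{\,n}$ loses too much.
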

It is clear that if in addition to satisfying properties \eqref{enu:contraction 1}-\eqref{enu:B+Q, convexity 4},
we have $\alpha\left(h\Delta\right)^{\delta}\leq f\left(h\right)$
for all $h\in H_{I}$, then the conclusion of Part I of Theorem \ref{thm:upper bound-1}
follows. We define the function in three stages. In the first stage
we define a function on the exterior algebra of $\mathbb{R}^{d}$,
then this function is used to define a function on the space of lattices
in $\mathbb{R}^{d}$. Finally we use that function to define a function
with the required properties.

\subsubsection{A function on the exterior algebra of $\mathbb{R}^{d}$. }

Let $\bigwedge\left(\mathbb{R}^{d}\right)=\bigoplus_{i=1}^{d-1}\bigwedge^{i}\left(\mathbb{R}^{d}\right)$.
We say that $v\in\bigwedge\left(\mathbb{R}^{d}\right)$ has degree
$i$ if $v\in\bigwedge^{i}\left(\mathbb{R}^{d}\right)$. Let $\Omega_{i}=\left\{ v_{1}\wedge\dots\wedge v_{i}:v_{1},\dots,v_{i}\in\mathbb{R}^{d}\right\} $
be the set of monomial elements of $\bigwedge\left(\mathbb{R}^{d}\right)$
with degree $i$. Define $\Omega=\bigcup_{i=1}^{d-1}\Omega_{i}$.
Consider the representation $\rho:H_{I}\to GL\left(\bigwedge\left(\mathbb{R}^{d}\right)\right)$.
Since $H_{I}$ is semisimple this representation decomposes as a direct
sum of irreducible subrepresentations. Associated to each of these
subrepresentations is a unique highest weight. Let $\mathcal{P}$
denote the set of all these highest weights. For $\lambda\in\mathcal{P}$,
denote by $U^{\lambda}$ the sum of all of the subrepresentations
with highest weight $\lambda$ and let $\tau_{\lambda}:\bigwedge\left(\mathbb{R}^{d}\right)\to U^{\lambda}$
be the orthogonal projection. 

Let $\epsilon>0$. For $0<i<d$ and $v\in\bigwedge^{i}\left(\mathbb{R}^{d}\right)$
the following function was defined by Benoist and Quint in \cite{MR2874934}.
Let 
\[
\varphi_{\epsilon}\left(v\right)=\begin{cases}
\min_{\lambda\in\mathcal{P}\setminus\left\{ 0\right\} }\epsilon^{\gamma_{i}}\left\Vert \tau_{\lambda}\left(v\right)\right\Vert ^{-1} & \textrm{ if }\left\Vert \tau_{0}\left(v\right)\right\Vert \leq\epsilon^{\gamma_{i}}\\
0 & \textrm{ else},
\end{cases}
\]
where for $0<i<d$ we define $\gamma_{i}=\left(d-i\right)i$. In fact,
the definition of $\varphi_{\epsilon}$ given here is a special case
of the definition given in \cite{MR2874934}. In the definition of
$\varphi_{\epsilon}$, given by Benoist and Quint, there is an extra
set of exponents depending on $\lambda\in\mathcal{P}\setminus\left\{ 0\right\} $
appearing. However, we see that in our case we may choose all of these
exponents to be equal to one. 

Let $\mathcal{F}=\left\{ v\in\bigwedge\left(\mathbb{R}^{d}\right):H_{I}v=v\right\} $
be the the fixed vectors of $H_{I}$. Let $\mathcal{F}^{c}$ be the
orthogonal complement of $\mathcal{F}$. We make the following remarks.
\begin{rem}
\label{norms}Since $\max_{\lambda\in\mathcal{P}\setminus\left\{ 0\right\} }\left\Vert \tau_{\lambda}\left(v\right)\right\Vert $
defines a norm on $\mathcal{F}^{c}$ there exists constants $c_{1}$
and $c_{2}$ depending on $\epsilon$ and the $\gamma_{i}'s$ such
that 
\[
c_{1}\left\Vert v\right\Vert ^{-1}\leq\varphi_{\epsilon}\left(v\right)\leq c_{2}\left\Vert v\right\Vert ^{-1}
\]
for all $v\in\mathcal{F}^{c}$. 
\end{rem}

\begin{rem}
\label{infity}For $0<i<d$ and $v\in\bigwedge^{i}\left(\mathbb{R}^{d}\right)\setminus\left\{ 0\right\} $
we have $\varphi_{\epsilon}\left(v\right)=\infty$ if and only if
$v$ is $H_{I}$ invariant and $\left\Vert v\right\Vert \leq\epsilon^{\gamma_{i}}$.
\end{rem}
We will need to refer to the constant defined as $b_{1}=\sup\left\{ \varphi_{\epsilon}\left(v\right):v\in\bigwedge\left(\mathbb{R}^{d}\right),\left\Vert v\right\Vert \geq1\right\} .$
In \cite{MR2874934} (Lemma 4.2) Benoist and Quint showed that the
function $\varphi_{\epsilon}$ satisfies the following convexity property. 
\begin{lem}
\label{lem: B+Q convexity}There exists a positive constant $C$ such
that for any $0<\epsilon<C^{-1}$, $u\in\Omega_{i_{1}}$, $v\in\Omega_{i_{2}}$
and $w\in\Omega_{i_{3}}$, with $i_{1}\geq0$, $i_{2}>0$ and $i_{3}>0$
such that $\varphi_{\epsilon}\left(u\wedge v\right)\geq1$ and $\varphi_{\epsilon}\left(u\wedge w\right)\geq1$,
one has:
\begin{enumerate}
\item \label{enu:B+Q, convexity 1}If $i_{1}>0$ and $i_{1}+i_{2}+i_{3}<d$,
then 
\[
\min\left\{ \varphi_{\epsilon}\left(u\wedge v\right),\varphi_{\epsilon}\left(u\wedge w\right)\right\} \leq\left(C\epsilon\right)^{1/2}\max\left\{ \varphi_{\epsilon}\left(u\right),\varphi_{\epsilon}\left(u\wedge v\wedge w\right)\right\} .
\]

\item \label{enu:B+Q, convexity 2}If $i_{1}=0$ and $i_{1}+i_{2}+i_{3}<d$,
then
\[
\min\left\{ \varphi_{\epsilon}\left(v\right),\varphi_{\epsilon}\left(w\right)\right\} \leq\left(C\epsilon\right)^{1/2}\varphi_{\epsilon}\left(v\wedge w\right).
\]

\item \label{enu:B+Q, convexity 3}If $i_{1}>0$, $i_{1}+i_{2}+i_{3}=d$
and $\left\Vert u\wedge v\wedge w\right\Vert \geq1$, then 
\[
\min\left\{ \varphi_{\epsilon}\left(u\wedge v\right),\varphi_{\epsilon}\left(u\wedge w\right)\right\} \leq\left(C\epsilon\right)^{1/2}\varphi_{\epsilon}\left(u\right).
\]

\item \label{enu:B+Q, convexity 4}If $i_{1}=0$, $i_{1}+i_{2}+i_{3}=d$
and $\left\Vert v\wedge w\right\Vert \geq1$, then
\[
\min\left\{ \varphi_{\epsilon}\left(v\right),\varphi_{\epsilon}\left(w\right)\right\} \leq b_{1}.
\]

\end{enumerate}
\end{lem}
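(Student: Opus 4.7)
\emph{Strategy.} The plan is to adapt the proof of Lemma~4.2 of \cite{MR2874934}, of which this statement is a specialisation to the setting where all the auxiliary weight exponents in their general definition collapse to one. The mechanism is: the hypothesis $\varphi_\epsilon(u \wedge v) \ge 1$ says that the nontrivial isotypic projections $\tau_\lambda(u \wedge v)$ are all small (bounded by $\epsilon^{\gamma_{i_1+i_2}}$ in norm, and similarly $\|\tau_0(u \wedge v)\| \le \epsilon^{\gamma_{i_1+i_2}}$), and analogously for $u \wedge w$. From this, combined with a refinement of Hadamard--Fischer at the level of isotypic components, one extracts an upper bound on $\min\{\varphi_\epsilon(u \wedge v), \varphi_\epsilon(u \wedge w)\}$ in terms of $\varphi_\epsilon(u)$ and $\varphi_\epsilon(u \wedge v \wedge w)$.

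\emph{Key steps for part~(1).} I would let $\lambda_1, \lambda_2 \in \mathcal{P}\setminus\{0\}$ realise the minima defining $\varphi_\epsilon(u \wedge v)$ and $\varphi_\epsilon(u \wedge w)$ respectively; equivalently these are the weights that maximise the corresponding projection norms. Since $u, v, w$ are monomials, the classical Gram--determinant inequality
\[
\|u\|\,\|u\wedge v\wedge w\| \le \|u\wedge v\|\,\|u\wedge w\|
\]
holds whenever $i_1+i_2+i_3\le d$. The key input imported from \cite{MR2874934} is a refinement of this at the level of isotypic projections, producing weights $\mu_1 \in \mathcal{P}$ of degree $i_1$ and $\mu_2 \in \mathcal{P}$ of degree $i_1+i_2+i_3$ -- realising the maxima in the definitions of $\varphi_\epsilon(u)$ and $\varphi_\epsilon(u\wedge v\wedge w)$ -- satisfying
\[
\|\tau_{\mu_1}(u)\|\,\|\tau_{\mu_2}(u\wedge v\wedge w)\| \le C\,\|\tau_{\lambda_1}(u\wedge v)\|\,\|\tau_{\lambda_2}(u\wedge w)\|
\]
for a representation-theoretic constant $C$. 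Taking reciprocals, inserting the appropriate powers of $\epsilon$, and invoking the elementary identity
\[
\gamma_{i_1+i_2} + \gamma_{i_1+i_3} - \gamma_{i_1} - \gamma_{i_1+i_2+i_3} = 2\,i_2 i_3,
\]
which follows by direct expansion from $\gamma_j = j(d-j)$, yields
\[
\varphi_\epsilon(u\wedge v)\,\varphi_\epsilon(u\wedge w) \le C\,\epsilon^{2 i_2 i_3}\,\varphi_\epsilon(u)\,\varphi_\epsilon(u\wedge v\wedge w).
\]
Applying $\min\{a,b\} \le \sqrt{ab}$ on the left and $\sqrt{ab} \le \max\{a,b\}$ on the right, and using that $\epsilon^{i_2 i_3} \le \epsilon^{1/2}$ for $\epsilon<1$ and $i_2, i_3 \ge 1$, gives the claimed inequality with factor $(C\epsilon)^{1/2}$.

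\emph{Remaining cases and main obstacle.} Part~(2) is the $i_1=0$ specialisation: the $\varphi_\epsilon(u)$ factor simply drops out and the identity becomes $\gamma_{i_2}+\gamma_{i_3}-\gamma_{i_2+i_3}=2 i_2 i_3$. Parts~(3) and (4) handle the top-degree case $i_1+i_2+i_3=d$, where $\bigwedge^d(\mathbb{R}^d)$ is one-dimensional and $H_I$-trivial; here $u\wedge v\wedge w$ (resp.\ $v\wedge w$) is essentially a scalar and the hypothesis $\|u\wedge v\wedge w\| \ge 1$ (resp.\ $\|v\wedge w\| \ge 1$) is what replaces the missing $\varphi_\epsilon$ term on the right, essentially via the absolute constant $b_1$ defined just above the lemma. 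The main obstacle is the refined Hadamard--Fischer inequality at the isotypic level; this is the substantive representation-theoretic content of Lemma~4.2 of \cite{MR2874934} and involves a Clebsch--Gordan style analysis of how projections onto isotypic components interact with wedge products. My plan is to invoke that lemma as a black box and simply verify the exponent bookkeeping in our simplified setting. An additional minor subtlety is tracking the trivial isotypic part via the side condition $\|\tau_0(\cdot)\| \le \epsilon^{\gamma_i}$ in the definition of $\varphi_\epsilon$, which must be preserved through each step of the argument.
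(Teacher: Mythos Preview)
The paper does not actually prove this lemma: it is stated with the attribution ``In \cite{MR2874934} (Lemma 4.2) Benoist and Quint showed that the function $\varphi_{\epsilon}$ satisfies the following convexity property'' and no proof is supplied. Your proposal to invoke Lemma~4.2 of \cite{MR2874934} as a black box and check the exponent bookkeeping is therefore exactly aligned with the paper's own treatment, and the sketch you give of the underlying mechanism (the isotypic refinement of the Hadamard--Fischer inequality together with the identity $\gamma_{i_1+i_2}+\gamma_{i_1+i_3}-\gamma_{i_1}-\gamma_{i_1+i_2+i_3}=2i_2i_3$) is correct.
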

We also need to obtain uniform bounds for the spherical averages of
$\varphi_{\epsilon}$. In order to do this we use the following Lemma
(Lemma 5.2) from \cite{MR1609447} will be used. 
\begin{lem}
\label{lem:5,2}Let $V$ be a finite-dimensional real inner product
space, $A$ a self-adjoint linear transformation of $V$, $K$ a closed
connected subgroup of $O\left(V\right)$, and $S$ a closed subset
of the unit sphere in $V$. Assume the only eigenvalues of $A$ are
$-1$, $0,$ and $1$ and denote by $W^{-}$, $W^{0}$ and $W^{+}$
the corresponding eigenspaces. Assume that $Kv\not\subset W^{0}$
for any $v\in S$ and that there exists a self-adjoint subgroup $H_{1}$
of $GL\left(V\right)$ with the following properties:
\begin{enumerate}
\item The Lie algebra of $H_{1}$ contains $A$.
\item $H_{1}$ is locally isomorphic to $SO\left(3,1\right)$.
\item $H_{1}\cap K$ is a maximal compact subgroup of $H_{1}$. 
\end{enumerate}
Then for any $\delta$, $0<\delta<2$, 
\[
\lim_{t\rightarrow\infty}\sup_{v\in S}\int_{K}\left\Vert \exp\left(tA\right)kv\right\Vert ^{-\delta}d\nu\left(k\right)=0.
\]

\end{lem}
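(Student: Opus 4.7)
The plan is a dominated-convergence argument on $K$. I would (i) establish pointwise decay $\|\exp(tA)kv\|^{-\delta}\to 0$ as $t\to\infty$ for $\nu$-almost every $k\in K$; (ii) produce a $v$-uniform integrable majorant; (iii) combine the two to obtain the sup-over-$v\in S$ convergence of the integrals.

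Decompose $V=W^{-}\oplus W^{0}\oplus W^{+}$ orthogonally with projections $p_{-},p_{0},p_{+}$. Self-adjointness of $A$ gives, for $t\geq 0$,
\[
\|\exp(tA)x\|^{2}=e^{-2t}\|p_{-}(x)\|^{2}+\|p_{0}(x)\|^{2}+e^{2t}\|p_{+}(x)\|^{2},
\]
so $\|\exp(tA)x\|\geq e^{t}\|p_{+}(x)\|$ and $\|\exp(tA)x\|\geq\|(p_{0}+p_{+})(x)\|$. For step (i), fix a Weyl representative $\sigma\in H_{1}\cap K$ of the non-trivial element of the Weyl group of $H_{1}\cong SO(3,1)$; then $\sigma A\sigma^{-1}=-A$, so $\sigma$ swaps $W^{\pm}$ and fixes $W^{0}$. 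The zero locus $\{k\in K:p_{+}(kv)=0\}$ is the vanishing set of a real-analytic function on the connected group $K$, hence is either all of $K$ or has $\nu$-measure zero. If it were all of $K$, then $Kv\subset W^{-}\oplus W^{0}$; and since $\sigma\in K$ one also has $Kv=\sigma Kv\subset W^{+}\oplus W^{0}$. Intersecting yields $Kv\subset W^{0}$, contradicting the hypothesis on $S$. Hence $p_{+}(kv)\neq 0$ for $\nu$-a.e.\ $k$, and $\|\exp(tA)kv\|\geq e^{t}\|p_{+}(kv)\|\to\infty$, giving the pointwise decay.

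For step (ii), set $\phi_{v}(k)=\|(p_{0}+p_{+})(kv)\|^{-\delta}$; then $\|\exp(tA)kv\|^{-\delta}\leq\phi_{v}(k)$ for all $t\geq 0$. The technical heart of the proof is the bound
\[
\sup_{v\in S}\int_{K}\phi_{v}(k)\,d\nu(k)<\infty.
\]
The singular locus $\{k:kv\in W^{-}\}$ is another proper subvariety of $K$, again by the Weyl-element argument (it excludes $Kv\subset W^{-}$). Near a singular point $k_{0}$, use the structure of $H_{1}\cong SO(3,1)$: decomposing $\mathrm{Lie}(H_{1})$ under $\mathrm{ad}(A)$ into the three root spaces $\mathfrak{h}_{1}^{-1},\mathfrak{h}_{1}^{0},\mathfrak{h}_{1}^{+1}$ (each $2$-dimensional, with $\mathfrak{h}_{1}^{+1}$ the Lie algebra of the expanding horospherical $N^{+}$), and using the constraint that $A$ has eigenvalues in $\{-1,0,1\}$ on $V$, one sees that the compact part of $\mathfrak{h}_{1}^{+1}$ inside $\mathrm{Lie}(H_{1}\cap K)\subset\mathrm{Lie}(K)$ sends $W^{-}$ into $W^{0}$, and does so injectively for vectors of full orbit. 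This produces a two-dimensional slice inside $K$ transverse to the singular locus on which $(p_{0}+p_{+})(kv)$ vanishes to first order, so $\phi_{v}$ is locally comparable to $\|\xi\|^{-\delta}$ on a two-dimensional disc, integrable exactly when $\delta<2$. Compactness of $S$ and continuity of the projections and local charts in $v$ upgrade this to the uniform bound.

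Finally, for step (iii), a standard compactness argument combining pointwise a.e.\ decay from (i), the uniform majorant from (ii), and continuity of the integrand in $v\in S$ yields uniform convergence: given $\epsilon>0$, uniform integrability of $\{\phi_{v}\}_{v\in S}$ gives a set $E_{\epsilon}\subset K$ on whose complement the integrand is bounded and tends to $0$ uniformly in $(k,v)$ by the explicit eigenvalue formula, while $\sup_{v\in S}\int_{E_{\epsilon}}\phi_{v}d\nu<\epsilon/2$. The main obstacle is the uniform integrability estimate in step (ii): matching the local codimension-$2$ geometry of the orbit map with the exponent condition $\delta<2$, uniformly as $v$ ranges over $S$. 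This is precisely the point at which all three hypotheses on $H_{1}$—the inclusion $A\in\mathrm{Lie}(H_{1})$, the local type $SO(3,1)$ (which supplies both the Weyl involution and the transverse dimension $2=\dim N^{+}$), and the maximal compactness of $H_{1}\cap K$—are used in concert.
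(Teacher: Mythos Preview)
The paper does not supply a proof: this is quoted as Lemma~5.2 of Eskin--Margulis--Mozes \cite{MR1609447} and used as a black box, so there is no in-paper argument to compare against.

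Your steps (i) and (ii) are essentially sound --- the Weyl element gives the a.e.\ decay, and the two-dimensional transverse in (ii) is genuine (one checks that $\mathrm{ad}(A)$ has no weight $\pm2$ on $\mathfrak{so}(V)\supset\mathrm{Lie}(K)$, so the derivative of $k\mapsto(p_0+p_+)(kv)$ at a point of $W^-$ really is governed by the action of $\mathfrak h_1^{+1}$). The gap is in step (iii). On the complement of $E_\epsilon$ the integrand is bounded but does \emph{not} tend to $0$ uniformly in $(k,v)$: wherever $p_+(kv)=0$ and $p_0(kv)\neq0$, the eigenvalue formula gives $\|\exp(tA)kv\|^{-\delta}\to\|p_0(kv)\|^{-\delta}>0$. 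Uniform integrability of $\{\phi_v\}$ keeps $kv$ away from $W^-$, not from $W^-\oplus W^0$; and since the singular set moves with $v$, no single $E_\epsilon$ works. Pointwise-a.e.\ decay plus a uniformly integrable $v$-dependent majorant does not by itself yield $\sup_{v\in S}\int_K\to0$.

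What is missing is a rate. One route is to interpolate the two lower bounds you already have, writing
\[
\|\exp(tA)kv\|^{-\delta}\leq e^{-\delta\theta t}\,\|p_+(kv)\|^{-\delta\theta}\,\|(p_0+p_+)(kv)\|^{-\delta(1-\theta)}
\]
for small $\theta>0$, and then repeat the singular-locus analysis for this sharper weight (which now has two nested strata, since $\|p_+(kv)\|$ vanishes to second order along your $2$-dimensional transverse but only to first order on the larger locus $\{kv\in W^0\oplus W^-\}$). Carrying this out recovers an explicit exponential rate, which is in effect what Eskin--Margulis--Mozes obtain by first reducing to $K_1=H_1\cap K$ and estimating directly on $H_1$-irreducibles.
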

Using Lemma \ref{lem:5,2} we can obtain the following bound on the
spherical averages.
\begin{lem}
\label{lem:r1>3r2>1}Suppose $r_{1}\geq3$ and $r_{2}\geq1$. Then
for all $\epsilon>0$, $0<\delta<2$ and $c>0$ there exists $t_{0}>0$
such that for all $t>t_{0}$ and all $v\in\mathcal{F}^{c}\setminus\left\{ 0\right\} $,
\[
\int_{K_{I}}\varphi_{\epsilon}\left(a_{t}kv\right)^{\delta}d\nu_{I}\left(k\right)<c\varphi_{\epsilon}\left(v\right)^{\delta}.
\]
\end{lem}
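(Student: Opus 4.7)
The plan is to pass from $\varphi_\epsilon$ to the Euclidean norm via Remark \ref{norms}, and then to invoke Lemma \ref{lem:5,2}. Since $\{a_t\}$ is self-adjoint with respect to the Euclidean inner product on $\mathbb{R}^d$ (and the induced one on $\bigwedge(\mathbb{R}^d)$), and every $k\in K_I\subset O_d(\mathbb{R})$ is orthogonal, both $a_t$ and $K_I$ preserve the orthogonal complement $\mathcal{F}^c$ of the $H_I$-fixed subspace $\mathcal{F}$. Remark \ref{norms} supplies constants $c_1,c_2>0$ (depending on $\epsilon$) with $c_1\|v\|^{-1}\leq \varphi_\epsilon(v)\leq c_2\|v\|^{-1}$ for every $v\in\mathcal{F}^c\setminus\{0\}$. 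By homogeneity in $\|v\|$, the lemma reduces to proving
\[
\lim_{t\to\infty}\,\sup_{v\in S_1}\,\int_{K_I}\|a_t k v\|^{-\delta}\,d\nu_I(k)\;=\;0,
\]
where $S_1$ denotes the unit sphere of $\mathcal{F}^c$.

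I would apply Lemma \ref{lem:5,2} with $V=\bigwedge(\mathbb{R}^d)$, $K=K_I$, $S=S_1$ and $A$ the infinitesimal generator of $\{a_t\}$. Two hypotheses are immediate. The eigenvalues of $A$ on $\mathbb{R}^d$ are $-1,0,1$ with multiplicities $1,\,d-2,\,1$, so every wedge of basis eigenvectors involves at most one $(+1)$-factor and at most one $(-1)$-factor, whence the eigenvalues of $A$ on each $\bigwedge^i(\mathbb{R}^d)$ still lie in $\{-1,0,1\}$. The required self-adjoint subgroup $H_1\cong SO(3,1)\subset GL(\mathbb{R}^d)$, with $A$ in its Lie algebra and $H_1\cap K_I$ maximal compact in $H_1$, is obtained by enlarging the 3-dimensional $SO(2,1)$-invariant subspace on which $\{a_t\}$ acts non-trivially (of signature $(2,1)$) by one extra vector in the positive-definite part of $Q_0|_{\ker M_0}$ to a 4-dimensional subspace of signature $(3,1)$; this is possible precisely because $r_1\geq 3$ and $r_2\geq 1$.

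The crucial remaining hypothesis is that $K_I v\not\subset W^0:=\ker A$ for every $v\in S_1$. Suppose for contradiction that $K_I v\subset W^0$ for some nonzero $v\in\mathcal{F}^c$, and set $L=\mathrm{span}(K_I v)$, a nonzero $K_I$-invariant subspace pointwise fixed by $a_t$. Conjugating, $L$ is also pointwise fixed by every $k a_t k^{-1}$, so in the Cartan decomposition $\mathfrak{h}_I=\mathfrak{k}_I\oplus\mathfrak{p}$ (with $A\in\mathfrak{p}$) the orbit $\mathrm{Ad}(K_I)\cdot A$ annihilates $L$. Under our hypotheses $H_I=SO(r_1,r_2)$ is simple and the adjoint action of $K_I$ on $\mathfrak{p}$ is irreducible, so $\mathrm{Ad}(K_I)\cdot A$ spans $\mathfrak{p}$; combined with $[\mathfrak{p},\mathfrak{p}]=\mathfrak{k}_I$, this forces all of $\mathfrak{h}_I$ to annihilate $L$, meaning $L\subset\mathcal{F}$ — contradicting $0\neq L\subset\mathcal{F}^c$.

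Once Lemma \ref{lem:5,2} is applied, the displayed decay, combined with the comparison $\varphi_\epsilon\asymp\|\cdot\|^{-1}$ on $\mathcal{F}^c$, yields the required bound $\int_{K_I}\varphi_\epsilon(a_t k v)^\delta\,d\nu_I(k)<c\,\varphi_\epsilon(v)^\delta$ for all $t$ sufficiently large in terms of $c$, $\epsilon$, $\delta$. The main technical obstacle in this plan is the verification of the hypothesis $K_I v\not\subset W^0$; it rests on the $K_I$-irreducibility of $\mathfrak{p}$ and the simplicity of $H_I$, which is exactly what fails in the signature $(2,2)$ case excluded by the standing hypotheses $r_1\geq 3$ and $r_2\geq 1$.
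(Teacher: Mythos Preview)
Your proposal is correct and follows essentially the same route as the paper: reduce to the norm inequality via Remark~\ref{norms}, then apply Lemma~\ref{lem:5,2} on $\bigwedge(\mathbb{R}^d)$, with the crucial step being the verification that $K_I v\not\subset W^0$ for nonzero $v\in\mathcal{F}^c$. The only substantive difference lies in how that last hypothesis is checked. The paper argues at the group level: it sets $H_v=\{h\in H_I: hkv=kv\ \text{for all }k\in K_I\}$, observes that $K_I$ normalises $H_v$, and uses that $K_I$ is a maximal proper subgroup of $H_I$ together with simplicity of $H_I$ to force $H_v\subset K_I$, so $\{a_t\}\not\subset H_v$. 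Your argument is the Lie-algebra counterpart: irreducibility of $\mathfrak{p}$ under $\mathrm{Ad}(K_I)$ forces $\mathfrak{p}$ to annihilate $L=\mathrm{span}(K_I v)$, and then $[\mathfrak{p},\mathfrak{p}]=\mathfrak{k}_I$ (valid since $\mathfrak{h}_I$ is simple) gives $L\subset\mathcal{F}$. Both ultimately rest on simplicity of $SO(r_1,r_2)$ for $r_1\geq 3$, $r_2\geq 1$, and neither is appreciably shorter or more general than the other.
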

\begin{proof}
The subset $S=\left\{ v\in\bigwedge\left(\mathbb{R}^{d}\right):\left\Vert v-\tau_{0}\left(v\right)\right\Vert =1\right\} $
is a closed subset of the unit sphere in $\bigwedge\left(\mathbb{R}^{d}\right)$.
We have $a_{t}=\exp\left(tA\right)$, for an appropriate choice of
$A$ satisfying the conditions of Lemma \ref{lem:5,2}. 

We claim that for any $v\in S$, $Kv\not\subset W^{0}$. To see this,
let 
\[
H_{v}=\left\{ h\in H_{I}:hkv=kv\textrm{ for all }k\in K_{I}\right\} .
\]
Note that $K_{I}$ normalises $H_{v}$. Let $E_{v}$ be the subgroup
generated by $K_{I}\cup H_{v}$. By its definition $E_{v}$ also normalises
$H_{v}$. Since $K_{I}$ is a maximal proper subgroup of $H_{I}$,
in the case that $H_{v}\not\subset K_{I}$ we must have $E_{v}=H_{I}$.
Therefore, $H_{v}$ is a normal subgroup of $H_{I}$. Since $r_{1}\geq3$
and $r_{2}\geq1$, $H_{I}$ is simple and hence $H_{v}=H_{I}$ or
$H_{v}$ is trivial. Since $S\cap\mathcal{F}=0$, the first case is
impossible. Therefore, for all $v\in S$, $H_{v}\subset K_{I}$. In
particular this means that $\left\{ a_{t}:t\in\mathbb{R}\right\} $
is not contained in $H_{v}$. This implies the claim.

Then if $r_{1}\geq3$ and $r_{2}\geq1$ the conditions of Lemma \ref{lem:5,2}
are satisfied. Hence, for any $\delta$ with $0<\delta<2$, 
\[
\lim_{t\rightarrow\infty}\sup_{v\in S}\int_{K_{I}}\left\Vert a_{t}kv\right\Vert ^{-\delta}d\nu_{I}\left(k\right)=0.
\]
This implies that for all $c>0$, there exists $t_{0}>0$, such that
for all $t>t_{0}$ and all $v\in\mathcal{F}^{c}\setminus\left\{ 0\right\} $,
\[
\int_{K_{I}}\left\Vert a_{t}kv\right\Vert ^{-\delta}d\nu_{I}\left(k\right)<c\left\Vert v\right\Vert ^{-\delta}.
\]
Then the claim of the Lemma follows from Remark \ref{norms}.
\end{proof}

\subsubsection{A function on the space of lattices. }

For any lattice $\Lambda$, we say that $v\in\Omega$ is $\Lambda$-integral
if one can write $v=v_{1}\wedge\dots\wedge v_{i}$ where $v_{1},\dots,v_{i}\in\Lambda$.
Let $\Omega_{i}\left(\Lambda\right)$ and $\Omega\left(\Lambda\right)$
be the sets of $\Lambda$-integral elements of $\Omega_{i}$ and $\Omega$
respectively. Define $f_{\epsilon}:SL_{d}\left(\mathbb{R}\right)/SL_{d}\left(\mathbb{Z}\right)\to\mathbb{R}$
by 
\[
f_{\epsilon}\left(\Lambda\right)=\max_{v\in\Omega\left(\Lambda\right)}\varphi_{\epsilon}\left(v\right).
\]
Note that, by Remark \ref{norms} for all $\epsilon>0$ there exists
some constant $c_{\epsilon}>0$ such that for any unimodular lattice
$\Lambda$, we have 
\begin{alignat}{1}
\max_{v\in\Omega\left(\Lambda\right)}\left\Vert v\right\Vert ^{-1} & \leq\max_{0<i<d}\left(\max_{v\in\Omega_{i}\left(\Lambda\right),\left\Vert \tau_{0}\left(v\right)\right\Vert \leq\epsilon^{\gamma_{i}}}\left\Vert v\right\Vert ^{-1}+\max_{v\in\Omega_{i}\left(\Lambda\right),\left\Vert \tau_{0}\left(v\right)\right\Vert >\epsilon^{\gamma_{i}}}\left\Vert v\right\Vert ^{-1}\right)\nonumber \\
 & \leq c_{\epsilon}f_{\epsilon}\left(\Lambda\right)+\max_{0<i<d}\epsilon^{-\gamma_{i}}.\label{eq:property 5}
\end{alignat}
Moreover, it follows from the definition of the $\alpha$ function
that 
\begin{equation}
\alpha\left(\Lambda\right)=\max\left\{ \max_{v\in\Omega\left(\Lambda\right)}\left\Vert v\right\Vert ^{-1},1\right\} .\label{eq:forgot}
\end{equation}
The following Lemma is necessary to ensure that the function $f_{\epsilon}\left(h\Delta\right)$
is finite for all $h\in H_{I}$. 
\begin{lem}
\label{lem:fixed vectors}For all $h\in H_{I}$, if $u\in\Omega\left(h\Delta\right)$,
then $u\not\in\mathcal{F}$.\end{lem}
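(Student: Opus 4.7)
The plan is to argue by contradiction. Suppose there is a nonzero $u\in\Omega(h\Delta)\cap\mathcal{F}$; write $u=v_{1}\wedge\cdots\wedge v_{i}$ with $v_{1},\ldots,v_{i}\in h\Delta$ linearly independent and set $W=\langle v_{1},\ldots,v_{i}\rangle\subset\mathbb{R}^{d}$. Since $u$ is a nonzero $H_{I}$-fixed monomial, $W$ must be $H_{I}$-invariant. Because $r_{1}\geq 3$ and $r_{2}\geq 1$, the group $H_{I}\cong SO(r_{1},r_{2})$ fixes $W_{1}=\langle e_{1},\ldots,e_{s}\rangle$ pointwise and acts irreducibly on $W_{2}=\langle e_{s+1},\ldots,e_{d}\rangle$, so the only possibilities are $W\subset W_{1}$ or $W=U_{1}\oplus W_{2}$ for some $U_{1}\subsetneq W_{1}$. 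Moreover, because the $v_{j}$ span $W$, lie in $h\Delta$, and $hW=W$, the sublattice $g^{-1}W\cap\mathbb{Z}^{d}$ has full rank $\dim W$; in other words, $g^{-1}W$ is a $\mathbb{Q}$-rational subspace of $\mathbb{R}^{d}$. This rationality is the common structural input both cases will exploit.

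The case $W=U_{1}\oplus W_{2}$ is easy: since $g^{-1}W\supset\ker M$, its annihilator is a nonzero rational subspace of $(\mathbb{R}^{d})^{*}$ contained in the annihilator $\langle L_{1},\ldots,L_{s}\rangle$ of $\ker M$. Any nonzero rational form in this subspace has the shape $\alpha_{1}L_{1}+\cdots+\alpha_{s}L_{s}$ with $\alpha\in\mathbb{R}^{s}\setminus\{0\}$, directly contradicting condition (\ref{enu:condition 3}).

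The main obstacle is the case $W\subset W_{1}$, since now the annihilator of $g^{-1}W$ lands in $\langle L_{s+1},\ldots,L_{d}\rangle$, about which condition (\ref{enu:condition 3}) is silent. The plan is to exploit the rationality of $Q=Q_{0}^{g}$. Choose a rational basis $q_{1},\ldots,q_{k}$ of $g^{-1}W$; since $gq_{j}\in W\subset W_{1}$, there is a block decomposition $gq_{j}=(x_{j},0)$ with $x_{j}\in\mathbb{R}^{s}$ nonzero. For any standard basis vector $e_{i}$ of $\mathbb{R}^{d}$, the block-diagonal structure of the matrix $A=\textrm{diag}(A_{1},A_{2})$ of $Q_{0}$ yields
\[
e_{i}^{T}(g^{T}Ag)q_{j}=(ge_{i})^{T}A(gq_{j})=y_{j}^{T}\cdot(\textrm{$i$-th column of }g_{1}),\qquad y_{j}:=A_{1}x_{j},
\]
where $g_{1}$ denotes the first $s$ rows of $g$. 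Rationality of $Q$ makes $g^{T}Ag$ a real scalar multiple of a rational matrix, so as $i$ varies the left hand side sweeps out a single one-dimensional $\mathbb{Q}$-subspace of $\mathbb{R}$; hence the row vector $y_{j}^{T}g_{1}$ is a real scalar multiple of a rational row vector. Equivalently, the linear form $y_{j}\cdot M=\sum_{l}(y_{j})_{l}L_{l}$ is rational, and the non-degeneracy of $A_{1}$ forces $y_{j}\neq 0$, again contradicting condition (\ref{enu:condition 3}). Both cases therefore fail, so no such $u$ can exist.
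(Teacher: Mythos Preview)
Your argument is correct and follows the same overall strategy as the paper's: show that a putative $u\in\Omega(h\Delta)\cap\mathcal{F}$ produces a proper rational subspace $g^{-1}W$ that is $H_g$-invariant, classify such subspaces as either contained in $g^{-1}\langle e_1,\ldots,e_s\rangle$ or containing $g^{-1}\langle e_{s+1},\ldots,e_d\rangle$, and in each case contradict condition~(\ref{enu:condition 3}). The paper delegates both the classification and the contradiction to Lemma~3.4 and Corollary~3.2 of \cite{2011arXiv1111.4428S}, handling the ``contains $W_2$'' case by passing to the $Q$-orthogonal complement so as to land inside $g^{-1}\langle e_1,\ldots,e_s\rangle$. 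You instead carry out both steps by hand: the classification via irreducibility of the standard $SO(r_1,r_2)$-representation on $W_2$, the ``contains $W_2$'' case by dualising (the annihilator of $g^{-1}W$ is a nonzero rational subspace of $\langle L_1,\ldots,L_s\rangle$), and the ``$W\subset W_1$'' case by an explicit computation using the block form of $Q_0$ and the rationality of $Q=Q_0^{g}$ to exhibit a rational combination $\sum_l (y_j)_l L_l$. The upshot is that your proof is self-contained and does not appeal to the companion paper, at the cost of a slightly longer argument; the paper's version is shorter but relies on results proved elsewhere.
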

\begin{proof}
Suppose for a contradiction that $u\in\Omega\left(h\Delta\right)\cap\mathcal{F}$.
Suppose that $u$ has degree $i$ for some $0<i<d$ and let $u=u_{1}\wedge\dots\wedge u_{i}$
and $U=\left\langle u_{1},\dots,u_{i}\right\rangle $. Since $u\in\Omega\left(h\Delta\right)$,
it follows that $U\cap h\Delta$ is a lattice in $U$. Moreover, because
$u\in\mathcal{F}$, $U\cap\Delta$ is also a lattice in $U$, or equivalently
$g^{-1}U\cap\mathbb{Z}^{d}$ is a lattice in $g^{-1}U$. The subspace
$g^{-1}U$ is $H_{g}$ invariant. 

Conversely, it follows from Lemma 3.4 of \cite{2011arXiv1111.4428S}
that if $V$ is any $H_{g}$ invariant subspace, then either
\begin{enumerate}
\item $V\subseteq g^{-1}\left\langle e_{1},\dots,e_{s}\right\rangle $ or,
\item $V=$\foreignlanguage{english}{$g^{-1}\left\langle e_{s+1},\dots,e_{d}\right\rangle \oplus V'$}
where $V'\subseteq g^{-1}\left\langle e_{1},\dots,e_{s}\right\rangle .$
\end{enumerate}
Therefore, either $V$ or the orthogonal complement of $V$ is contained
in $g^{-1}\left\langle e_{1},\dots,e_{s}\right\rangle $. By Corollary
3.2 of \cite{2011arXiv1111.4428S}, $g^{-1}\left\langle e_{1},\dots,e_{s}\right\rangle $
contains no subspaces defined over $\mathbb{Q}$. This implies that
if $V$ is any $H_{g}$ invariant subspace then $V$ is not defined
over $\mathbb{Q}$. In particular $V\cap\mathbb{Z}^{d}$ cannot be
a lattice in $V$. This gives a contradiction. 
\end{proof}

\subsubsection{A function on $H_{I}$.}

Define $\widetilde{f}_{\Delta,\epsilon}:H_{I}\rightarrow\mathbb{R}$
by 
\[
\widetilde{f}_{\Delta,\epsilon}\left(h\right)=f_{\epsilon}\left(h\Delta\right).
\]
In view of \eqref{eq:property 5} and \eqref{eq:forgot} the proof
of part I of Theorem \ref{thm:upper bound-1} will be complete provided
that that the conditions \eqref{enu:contraction 1}-\eqref{enu:contraction 4}
from Proposition \ref{prop:Let--be} are satisfied by the function
$\widetilde{f}_{\Delta,\epsilon}$ for some $\epsilon>0$. It is clear
that \foreignlanguage{english}{$\widetilde{f}_{\Delta,\epsilon}$}
is left $K_{I}$ invariant. Also, since $\left\Vert \tau_{\lambda}\left(\rho\left(h^{-1}\right)\right)\right\Vert ^{-1}\leq\left\Vert \tau_{\lambda}\left(hv\right)\right\Vert /\left\Vert v\right\Vert \leq\left\Vert \tau_{\lambda}\left(\rho\left(h\right)\right)\right\Vert $
for all $\lambda\in\mathcal{P}$, $v\in\Omega$ and $h\in H_{I}$,
$\widetilde{f}_{\Delta,\epsilon}$ also satisfies condition \eqref{enu:contraction 1}
of Proposition \ref{prop:Let--be}. From Remark \ref{infity} we get
that $\widetilde{f}_{\Delta,\epsilon}\left(1\right)=\infty$ only
if there exists $v\in\Omega\left(\Delta\right)\cap\mathcal{F}$, but
by Lemma \ref{lem:fixed vectors} we know that no such $v$ exists
and so $\widetilde{f}_{\Delta,\epsilon}\left(1\right)<\infty$. It
remains to show that $\widetilde{f}_{\Delta,\epsilon}$ is contracted
by the operator $A_{t}$. The proof is very similar to that of Proposition
5.3 in \cite{MR2874934}.
\begin{lem}
Suppose $r_{1}\geq3$ and $r_{2}\geq1$. There exists $\epsilon>0$
such that for all $0<\delta<2$, the function $\widetilde{f}_{\Delta,\epsilon}^{\delta}$
is contracted by the operator $A_{t}$. \end{lem}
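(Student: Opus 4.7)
The plan is to adapt Benoist--Quint's contraction argument from Proposition~5.3 of \cite{MR2874934}, using as the two main inputs the convexity estimates of Lemma \ref{lem: B+Q convexity} and the spherical-average decay of Lemma \ref{lem:r1>3r2>1}. What must be shown is that there exists $\epsilon > 0$ such that, for every $c > 0$, one can find $t_0 > 0$ and $b > 0$ with
\[
\int_{K_I} f_\epsilon\bigl(a_{t_0} k h \Delta\bigr)^{\delta} \, d\nu_I(k) \le c\, f_\epsilon(h\Delta)^{\delta} + b \qquad \text{for every } h \in H_I.
\]
I fix $\epsilon$ once and for all small enough that $C\epsilon < 1$ so Lemma \ref{lem: B+Q convexity} applies, and (say) $(C\epsilon)^{1/2} < 1/2$, which will make the flag-extraction iteration below terminate.

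Set $\Lambda = h\Delta$. The central structural step is to show that the maximum defining $f_\epsilon(\Lambda)$ is essentially realised on a \emph{bounded-length chain} of nested vectors in $\Omega(\Lambda)$. Using Lemma \ref{lem: B+Q convexity} iteratively, one argues: if $v, w \in \Omega(\Lambda)$ both satisfy $\varphi_\epsilon \ge 1$ and their underlying subspaces are incomparable (neither one contains the other), then one of the cases (1)--(4) of Lemma \ref{lem: B+Q convexity} applies to a suitable triple $(u, v', w')$ obtained by splitting off a common sub-wedge, producing an element of $\Omega(\Lambda)$ with strictly larger $\varphi_\epsilon$ (by a factor $(C\epsilon)^{-1/2}>\sqrt{2}$). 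Iterating the replacement, after at most $d-1$ steps one obtains a chain $v_1^*, \ldots, v_r^* \in \Omega(\Lambda)$ (with $r \le d-1$) to which every $v \in \Omega(\Lambda)$ with $\varphi_\epsilon(v) \ge 1$ is comparable. By Lemma \ref{lem:fixed vectors}, each $v_i^* \in \mathcal{F}^c \setminus \{0\}$.

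Applying the same flag extraction to the translated lattice $a_t k \Lambda$ identifies its chain as $a_t k v_1^*, \ldots, a_t k v_r^*$, whence
\[
f_\epsilon\bigl(a_t k \Lambda\bigr)^{\delta} \le \sum_{i=1}^{r} \varphi_\epsilon\bigl(a_t k v_i^*\bigr)^{\delta} + b_1^{\delta}.
\]
Since each $v_i^* \in \mathcal{F}^c \setminus \{0\}$, Lemma \ref{lem:r1>3r2>1} with contraction parameter $c/(d-1)$ supplies a threshold $t_0$ such that, for all $t \ge t_0$,
\[
\int_{K_I} \varphi_\epsilon\bigl(a_t k v_i^*\bigr)^{\delta} \, d\nu_I(k) < \tfrac{c}{d-1} \varphi_\epsilon(v_i^*)^{\delta} \le \tfrac{c}{d-1} f_\epsilon(\Lambda)^{\delta}.
\]
Summing over $i$ and setting $b = b_1^{\delta}$ yields the required contraction inequality.

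The main obstacle is the flag reduction: establishing, with constants uniform in $h$, that the near-maximizers of $\varphi_\epsilon$ on $\Omega(\Lambda)$ are forced into a nested chain of length at most $d - 1$, and that after translation by $a_t k$ the new near-maximizers remain of this form up to the additive error $b_1^\delta$. This requires case-by-case use of (1)--(4) of Lemma \ref{lem: B+Q convexity}, distinguishing whether the competing subspaces intersect trivially and whether together they span $\mathbb{R}^d$, together with an induction on dimension to guarantee termination. Lemma \ref{lem:fixed vectors} is essential here, as it prevents the iteration from stabilising on an $H_I$-invariant element, on which $\varphi_\epsilon$ would be infinite (Remark \ref{infity}) and on which Lemma \ref{lem:r1>3r2>1} would fail.
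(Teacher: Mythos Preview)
Your overall strategy---combining the Benoist--Quint convexity lemma with the spherical-average decay of Lemma \ref{lem:r1>3r2>1}---is the right one and matches the paper. However, there is a genuine gap at the step you yourself flag as ``the main obstacle'': the assertion that the chain for the translated lattice $a_t k\Lambda$ coincides with $a_t k$ applied to the chain for $\Lambda$. This is not true in general, and you give no mechanism for it. The maximizer of $\varphi_\epsilon$ on $\Omega(\Lambda)$ in a given degree need not translate to the maximizer on $\Omega(a_t k\Lambda)$; a different $\Lambda$-integral vector can overtake it. Consequently the displayed inequality
\[
f_\epsilon(a_t k\Lambda)^\delta \le \sum_{i=1}^r \varphi_\epsilon(a_t k v_i^*)^\delta + b_1^\delta
\]
is unjustified.

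The paper resolves this by building a \emph{margin} into the definition of the near-maximizers. Having chosen $t_0$ (from Lemma \ref{lem:r1>3r2>1} with contraction factor $c/d$), set $m_0=\|\rho(a_{t_0})\|$, so that $m_0^{-1}\varphi_\epsilon(v)\le \varphi_\epsilon(a_{t_0}kv)\le m_0\varphi_\epsilon(v)$ for all $v$ and $k$. Now define
\[
\Psi(h\Delta)=\bigl\{v\in\Omega(h\Delta):\ f_\epsilon(h\Delta)\le m_0^2\,\varphi_\epsilon(v)\bigr\}.
\]
The margin $m_0^2$ guarantees that any $v\notin\Psi(h\Delta)$ remains below the translated maximum: $\varphi_\epsilon(a_{t_0}kv)\le m_0\varphi_\epsilon(v)<m_0^{-1}f_\epsilon(h\Delta)\le\max_{\psi\in\Psi}\varphi_\epsilon(a_{t_0}k\psi)$. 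This is precisely the translation-stability you were missing. One then needs $\epsilon$ small enough that $m_0^4C\epsilon<1$, so that the convexity lemma forces $\Psi(h\Delta)$ to contain at most one element (up to sign) per degree when $f_\epsilon(h\Delta)>\max\{b_1,m_0^2\}$; the complementary case gives the additive constant $b$ directly. Note that this ties $\epsilon$ to $t_0$---your order of choosing $\epsilon$ first, independently of $t_0$, is what prevents you from introducing the correct margin. (It suffices to run the argument for a single $c<1$ and iterate, so $\epsilon$ is still fixed once and for all.) Finally, no nested-flag structure is needed; uniqueness per degree is enough.
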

\begin{proof}
Fix $c>0$. By Lemma \ref{lem:r1>3r2>1} there exists $t_{0}>0$,
so that for any $v\in\mathcal{F}^{c}\setminus\left\{ 0\right\} $,
\begin{equation}
\int_{K_{I}}\varphi_{\epsilon}\left(a_{t_{0}}kv\right)^{\delta}d\nu_{I}\left(k\right)<\frac{c}{d}\varphi_{\epsilon}\left(v\right)^{\delta}.\label{eq:contraction 1}
\end{equation}
Let $m_{0}=\left\Vert \rho\left(a_{t_{0}}\right)\right\Vert =\left\Vert \rho\left(a_{t_{0}}^{-1}\right)\right\Vert $.
Then, for all $v\in\bigwedge\left(\mathbb{R}^{d}\right)$, 
\begin{equation}
m{}_{0}^{-1}\leq\left\Vert a_{t_{0}}v\right\Vert /\left\Vert v\right\Vert \leq m_{0}.\label{eq:ineq2}
\end{equation}
It follows from the definition of $\varphi_{\epsilon}$ and \eqref{eq:ineq2}
that 
\begin{equation}
m{}_{0}^{-1}\varphi_{\epsilon}\left(v\right)\leq\varphi_{\epsilon}\left(a_{t_{0}}v\right)\leq m_{0}\varphi_{\epsilon}\left(v\right).\label{eq:contraction 2}
\end{equation}
Let 
\[
\Psi\left(h\Delta\right)=\left\{ v\in\Omega\left(h\Delta\right):f_{\epsilon}\left(h\Delta\right)\leq m_{0}^{2}\varphi_{\epsilon}\left(v\right)\right\} .
\]
Note that 
\begin{equation}
f_{\epsilon}\left(h\Delta\right)=\max_{\psi\in\Psi\left(h\Delta\right)}\varphi_{\epsilon}\left(\psi\right).\label{eq:contraction 3}
\end{equation}
Let $C$ be the constant from Lemma \ref{lem: B+Q convexity}. Assume
that $\epsilon$ is small enough so that 
\begin{equation}
m_{0}^{4}C\epsilon<1.\label{eq:contradiction}
\end{equation}
There are now two cases.\smallskip{}

\noindent \emph{Case 1}. If $f_{\epsilon}\left(h\Delta\right)\leq\max\left\{ b_{1},m_{0}^{2}\right\} $.
In this case \eqref{eq:contraction 2} and the fact that $f_{\epsilon}$
is left $K_{I}$ invariant, imply that $f_{\epsilon}\left(a_{t_{0}}kh\Delta\right)\leq m_{0}f_{\epsilon}\left(h\Delta\right)$.
Hence 
\begin{equation}
\int_{K_{I}}f_{\epsilon}\left(a_{t_{0}}kh\Delta\right)^{\delta}d\nu_{I}\left(k\right)\leq\left(m_{0}\max\left\{ b_{1},m_{0}^{2}\right\} \right)^{\delta}.\label{eq:case 1}
\end{equation}
\smallskip{}

\noindent \emph{Case 2.} If $f_{\epsilon}\left(h\Delta\right)>\max\left\{ b_{1},m_{0}^{2}\right\} $.
This implies:
\begin{claim}
\label{claim}The set $\Psi\left(h\Delta\right)$ contains only one
element up to sign change in each degree. 
\end{claim}

We verify the claim as follows. Assume that for some $0<i<d$, $\Psi\left(h\Delta\right)\cap\Omega\left(h\Delta\right)$
contains two non-colinear elements, $v_{0}$ and $w_{0}$. Then because
$f_{\epsilon}\left(h\Delta\right)>m_{0}^{2}$ and $v_{0}$ and $w_{0}$
are in $\Psi\left(h\Delta\right)$, we have $\varphi_{\epsilon}\left(v_{0}\right)\geq1$
and \foreignlanguage{english}{$\varphi_{\epsilon}\left(w_{0}\right)\geq1$}.
We can write $v_{0}=u\wedge v$ and $w_{0}=u\wedge w$ where $u\in\Omega_{i_{1}}\left(h\Delta\right),$
$v\in\Omega_{i_{2}}\left(h\Delta\right)$ and $w\in\Omega_{i_{2}}\left(h\Delta\right)$
with $i_{1}\geq0$ and $i_{2}>0$. There are four cases.\smallskip{}

\noindent \emph{Case 2.1.} If $i_{2}<i$ and $i_{2}<d-i$. In this
case
\[
f_{\epsilon}\left(h\Delta\right)\leq m_{0}^{2}\min\left\{ \varphi_{\epsilon}\left(u\wedge v\right),\varphi_{\epsilon}\left(u\wedge w\right)\right\} \leq\left(m_{0}^{4}C\epsilon\right)^{1/2}\max\left\{ \varphi_{\epsilon}\left(u\right),\varphi_{\epsilon}\left(u\wedge v\wedge w\right)\right\} ,
\]
 by Lemma \ref{lem: B+Q convexity} part \eqref{enu:B+Q, convexity 1}.
This implies that 
\begin{equation}
f_{\epsilon}\left(h\Delta\right)\leq\left(m_{0}^{4}C\epsilon\right)^{1/2}f_{\epsilon}\left(h\Delta\right),\label{eq:contradiction 2}
\end{equation}
which contradicts \eqref{eq:contradiction}.\smallskip{}

\noindent \emph{Case 2.2}. If $i_{2}=i<d-i$. In this case $u=1$.
The same computation but using Lemma \ref{lem: B+Q convexity} part
\eqref{enu:B+Q, convexity 2} still gives \eqref{eq:contradiction 2}
which is still a contradiction. \smallskip{}

\noindent \emph{Case 2.3.} If $i_{2}=d-i<i$. In this case $\left\Vert u\wedge v\wedge w\right\Vert $
is an integer. Therefore, the same computation but using Lemma \ref{lem: B+Q convexity}
part \eqref{enu:B+Q, convexity 3} still gives \eqref{eq:contradiction 2}.\smallskip{}

\noindent \emph{Case 2.4.} If $i_{2}=i=d-i$. The same computation,
using Lemma \ref{lem: B+Q convexity} part \eqref{enu:B+Q, convexity 4}
gives 
\[
f_{\epsilon}\left(h\Delta\right)\leq b_{1},
\]
which is again a contradiction. \smallskip{}

\noindent This completes the proof of the claim.

\medskip{}
Suppose $v\in\Omega$ is arbitrary. If $v\not\in\Psi\left(h\Delta\right)$,
then $f_{\epsilon}\left(h\Delta\right)>m_{0}^{2}\varphi_{\epsilon}\left(v\right),$
and by left $K_{I}$ invariance of $\varphi_{\epsilon}$, \eqref{eq:contraction 2}
and \eqref{eq:contraction 3} for all $k\in K_{I}$ we have 
\begin{equation}
\varphi_{\epsilon}\left(a_{t_{0}}kv\right)\leq m_{0}\varphi_{\epsilon}\left(v\right)\leq m_{0}^{-1}f_{\epsilon}\left(h\Delta\right)\leq m_{0}^{-1}\max_{\psi\in\Psi\left(h\Delta\right)}\varphi_{\epsilon}\left(\psi\right)\leq\max_{\psi\in\Psi\left(h\Delta\right)}\varphi_{\epsilon}\left(a_{t_{0}}k\psi\right).\label{eq:contraction 4}
\end{equation}
If $v\in\Psi\left(h\Delta\right)$, then \eqref{eq:contraction 4}
holds for obvious reasons. Therefore \eqref{eq:contraction 4} holds
for all $v\in\Omega$. Thus using the definition of $f_{\epsilon}$
and \eqref{eq:contraction 4} we get 
\begin{alignat}{1}
\int_{K_{I}}f_{\epsilon}\left(a_{t_{0}}kh\Delta\right)^{\delta}d\nu_{I}\left(k\right) & =\int_{K_{I}}\max_{v\in\Omega\left(h\Delta\right)}\varphi_{\epsilon}\left(a_{t_{0}}kv\right)^{\delta}d\nu_{I}\left(k\right)\leq\sum_{\psi\in\Psi\left(h\Delta\right)}\int_{K_{I}}\varphi_{\epsilon}\left(a_{t_{0}}k\psi\right)^{\delta}d\nu_{I}\left(k\right).\label{eq:contraction 5}
\end{alignat}
Using Lemma \ref{lem:fixed vectors} we see that for all $\psi\in\Psi\left(h\Delta\right)$,
$\psi\not\in\mathcal{F}$ and hence $\psi-\tau_{0}\left(\psi\right)\in\mathcal{F}^{c}\setminus\left\{ 0\right\} $.
Moreover, if $\varphi_{\epsilon}\left(a_{t_{0}}k\psi\right)\neq0$,
then $\varphi_{\epsilon}\left(a_{t_{0}}k\psi\right)=\varphi_{\epsilon}\left(a_{t_{0}}k\left(\psi-\tau_{0}\left(\psi\right)\right)\right)$
and we can apply \eqref{eq:contraction 1} to get 
\begin{equation}
\int_{K_{I}}\varphi_{\epsilon}\left(a_{t_{0}}k\psi\right)^{\delta}d\nu_{I}\left(k\right)\leq\frac{c}{d}\varphi_{\epsilon}\left(\psi\right)^{\delta},\label{eq:contarction 6}
\end{equation}
 for each $\psi\in\Psi\left(h\Delta\right)$. If $\varphi_{\epsilon}\left(a_{t_{0}}k\psi\right)=0$,
then it is clear that \eqref{eq:contarction 6} also holds. Using
Claim \ref{claim}, we obtain 
\[
\sum_{\psi\in\Psi\left(h\Delta\right)}\int_{K_{I}}\varphi_{\epsilon}\left(a_{t_{0}}k\psi\right)^{\delta}d\nu_{I}\left(k\right)\leq d\max_{\psi\in\Psi\left(h\Delta\right)}\int_{K_{I}}\varphi_{\epsilon}\left(a_{t_{0}}k\psi\right)^{\delta}d\nu_{I}\left(k\right),
\]
the claim of the Lemma follows from \eqref{eq:contraction 3}, \eqref{eq:case 1},
\eqref{eq:contraction 5} and \eqref{eq:contarction 6}. 
\end{proof}

\subsection{Proof of part II of Theorem \ref{thm:upper bound-1}. }

This time the aim is to construct a function such that it satisfies
the conditions of the following Lemma. 
\begin{lem}
\label{lem:SL(2,R)}Suppose $r_{1}=2$ and $r_{2}=1$ or $r_{1}=r_{2}=2$.
Let $f:H_{I}\to\mathbb{R}$ be a strictly positive continuous function
such that:
\begin{enumerate}
\item \label{enu:contraction 1-1}For any $\epsilon>0$, there exists a
neighbourhood $V\left(\epsilon\right)$ of $1$ in $H_{I}$ such that
\[
\left(1-\epsilon\right)f\left(h\right)\leq f\left(uh\right)\leq\left(1+\epsilon\right)f\left(h\right)
\]
for all $h\in H_{I}$ and $u\in V\left(\epsilon\right)$.
\item \label{enu:contraction 2-1}The function $f$ is left $K_{I}$ invariant. 
\item \label{enu:contraction 3-1}$f\left(1\right)<\infty$. 
\item \label{enu:contraction 4-1}There exists $t_{0}>0$ and $b>0$, such
that for all $h\in H_{I}$ and $0\leq t\leq t_{0}$, 
\[
A_{t}f\left(h\right)\leq f\left(h\right)+b.
\]

\end{enumerate}
Then $\sup_{t>1}\frac{1}{t}A_{t}f\left(1\right)<\infty$.\end{lem}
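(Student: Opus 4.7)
The plan is to follow the strategy of the proof of Proposition \ref{prop:Let--be}, but since hypothesis \eqref{enu:contraction 4-1} is weaker (bounded increment per time step rather than genuine contraction), the best we can hope for is linear rather than uniform control. The argument proceeds in three steps: iteration of the hypothesis, reduction to a $K_I$-biinvariant function, and a random-walk concentration estimate in the rank-one subgroup.

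First, hypothesis \eqref{enu:contraction 4-1} iterates trivially: for all $h \in H_I$ and all integers $n \ge 0$, one has $A_{t_0}^n f(h) \le f(h) + nb$. Indeed, by induction,
\[
A_{t_0}^{n+1} f(h) = \int_{K_I} A_{t_0}^n f(a_{t_0} k h)\,d\nu_I(k) \le \int_{K_I}\bigl(f(a_{t_0} k h) + nb\bigr)\,d\nu_I(k) \le f(h) + (n+1)b.
\]

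Second, pass to the right-$K_I$-average $\tilde f(h) = \int_{K_I} f(hk)\,d\nu_I(k)$, which is $K_I$-biinvariant because $f$ is left-$K_I$-invariant by \eqref{enu:contraction 2-1}. Right-$K_I$-averaging commutes with $A_{t_0}$, so $\tilde f$ satisfies the same iterated inequality, and moreover $A_t f(1) = \tilde f(a_t)$. For $K_I$-biinvariant functions the operator $A_t$ acts on $\tilde f$ as convolution by the $K_I$-biinvariant probability measure $\mu_t$ supported on the double coset $K_I a_t K_I$; consequently
\[
\int_{H_I} \tilde f(g)\,d\mu_{t_0}^{*n}(g) = A_{t_0}^n \tilde f(1) \le \tilde f(1) + nb.
\]

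Third, use random-walk concentration in the rank-one subgroup generated by $\{a_t\}$ and the corresponding maximal compact. By the law of large numbers for random walks on $SO(2,1)$ (with an analogous statement for $SO(2,2)$), there exist constants $c > 0$, $p > 0$, and $\Delta > 0$ such that, for all sufficiently large $n$,
\[
\mu_{t_0}^{*n}\bigl(\{g \in H_I : |\kappa(g) - cn| < \Delta\}\bigr) \ge p,
\]
where $\kappa$ denotes the Cartan projection. Combined with the biinvariance of $\tilde f$ and the uniform continuity \eqref{enu:contraction 1-1} (which gives $\tilde f(g) \ge (1 - \epsilon_0) \tilde f(a_{cn})$ for $g$ in this set after possibly shrinking $\Delta$), the integrated estimate forces
\[
(1 - \epsilon_0) p \cdot \tilde f(a_{cn}) \le \tilde f(1) + nb,
\]
i.e., $A_{cn} f(1) = O(n)$ along the sequence $t = cn$. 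A final interpolation via \eqref{enu:contraction 1-1} extends this bound to all real $t > 1$, yielding $\sup_{t > 1} A_t f(1)/t < \infty$.

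The main obstacle is the concentration step: one must show that the Cartan projection under $\mu_{t_0}^{*n}$ concentrates on a bounded window around $cn$ with uniform positive probability. This relies on positivity of the top Lyapunov exponent for the driving measure $a_{t_0}\cdot\nu_I$, which is classical in the rank-one setting. In the $SO(2,2)$ case one either handles each simple factor separately or invokes positivity of the Lyapunov exponent for the full $SO(2,2)$-random walk, which follows from Furstenberg-type irreducibility of the $K_I$-action.
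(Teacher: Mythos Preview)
Your iteration in step~1 and the reduction to a $K_I$-biinvariant function in step~2 are both correct, and this is exactly how the argument begins. The gap is in step~3: the concentration estimate you claim is false. For the $K_I$-biinvariant driving measure supported on the single double coset $K_I a_{t_0} K_I$, the Cartan projection of a product of $n$ independent samples satisfies a central limit theorem with \emph{positive} variance (already visible at $n=2$, since $\kappa(a_{t_0} k a_{t_0})$ genuinely depends on $k$). Consequently
\[
\mu_{t_0}^{*n}\bigl(\{g:\,|\kappa(g)-cn|<\Delta\}\bigr)\;\asymp\;\Delta/\sqrt{n}\;\longrightarrow\;0,
\]
so no fixed $p>0$ works. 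Positivity of the top Lyapunov exponent gives the law of large numbers $\kappa/n\to c$, which is concentration on a window of width $\epsilon n$, not on a window of fixed width $\Delta$; you have conflated the two. Plugging the correct $p\asymp 1/\sqrt{n}$ into your inequality yields only $\tilde f(a_{cn})=O(n^{3/2})$, and attempts to widen the window (to $\epsilon n$ or $C\sqrt{n}$) run into the fact that condition~\eqref{enu:contraction 1-1} only controls $\tilde f(a_s)/\tilde f(a_{cn})$ for $|s-cn|$ bounded, not growing with $n$.

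The paper's proof does not go through random-walk limit theorems at all: it simply observes that $SO(2,1)$ is locally isomorphic to $SL_2(\mathbb{R})$ and $SO(2,2)$ to $SL_2(\mathbb{R})\times SL_2(\mathbb{R})$, and then invokes Lemma~5.13 of Eskin--Margulis--Mozes. That lemma is proved using the explicit product formula for $K$-biinvariant measures on $SL_2(\mathbb{R})$ (equivalently, the addition formula for hyperbolic distance), which gives a precise integral representation of $A_{t_1}A_{t_2}$ in terms of $A_\tau$ with a concrete kernel. It is this explicit kernel---not a soft concentration principle---that allows one to compare $A_t f(1)$ pointwise with $A_{t_0}^n f(1)$ and extract the sharp linear bound. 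General random-walk theory on semisimple groups does not seem to yield this without substantial extra work.
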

\begin{proof}
Since $SO\left(2,1\right)$ is locally isomorphic to $SL_{2}\left(\mathbb{R}\right)$
and $SO\left(2,2\right)$ is locally isomorphic to $SL_{2}\left(\mathbb{R}\right)\times SL_{2}\left(\mathbb{R}\right)$,
this follows directly from Lemma 5.13 from \cite{MR1609447}. 
\end{proof}
The general strategy of this subsection is broadly the same as in
the last one. First we define a certain function on the exterior algebra
of $\mathbb{R}^{d}$ and then we use this function to define a function
which has the properties demanded by Lemma \ref{lem:SL(2,R)}.

\subsubsection{Functions on the exterior algebra of $\mathbb{R}^{d}$. }

As before we work with a function on the exterior algebra of $\mathbb{R}^{d}$.
This time the definition is simpler because in this case the vectors
fixed by the action of $H_{I}$ cause no extra problems. For $\epsilon>0$,
$0<i<d$ and $v\in\bigwedge^{i}\left(\mathbb{R}^{d}\right)$ we define
\[
\widetilde{\varphi}_{\epsilon}\left(v\right)=\epsilon^{\gamma_{i}}\left\Vert v\right\Vert ^{-1}.
\]
If $v\in\bigwedge^{0}\left(\mathbb{R}^{d}\right)$ or $v\in\bigwedge^{d}\left(\mathbb{R}^{d}\right)$
then we set $\widetilde{\varphi}_{\epsilon}\left(v\right)=1$. The
following Lemma is the analogue of Lemma \ref{lem: B+Q convexity}. 
\begin{lem}
\label{lem:simple convexity}Let $i_{1}\geq0$ and $i_{2}>0$ and
$\Lambda$ be a unimodular lattice . Then for all $u\in\Omega_{i_{1}}\left(\Lambda\right),$$v\in\Omega_{i_{2}}\left(\Lambda\right)$
and $w\in\Omega_{i_{2}}\left(\Lambda\right)$, 
\[
\widetilde{\varphi}_{\epsilon}\left(u\wedge v\right)\widetilde{\varphi}_{\epsilon}\left(u\wedge w\right)\leq\epsilon^{2i_{2}}\widetilde{\varphi}_{\epsilon}\left(u\right)\widetilde{\varphi}_{\epsilon}\left(u\wedge v\wedge w\right).
\]
\end{lem}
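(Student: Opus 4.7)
The plan is to unfold the definition of $\widetilde{\varphi}_{\epsilon}$ and reduce the assertion to two independent ingredients: an elementary inequality on the exponents of $\epsilon$, and a standard inequality on norms of wedge products. Since $\widetilde{\varphi}_{\epsilon}(x)=\epsilon^{\gamma_i}\|x\|^{-1}$ on $\bigwedge^i(\mathbb{R}^d)$ (with the convention that it equals $1$ on $\bigwedge^0$), the claim is equivalent to
\[
\epsilon^{2\gamma_{i_1+i_2}}\|u\wedge v\|^{-1}\|u\wedge w\|^{-1}\;\leq\;\epsilon^{2i_2+\gamma_{i_1}+\gamma_{i_1+2i_2}}\|u\|^{-1}\|u\wedge v\wedge w\|^{-1}.
\]
In contrast to Lemma \ref{lem: B+Q convexity}, no hypothesis of the form $\widetilde{\varphi}_{\epsilon}(\cdot)\geq 1$ is needed: because this simpler function ignores the Benoist--Quint decomposition into $H_I$-isotypic components, there is no fixed-vector obstruction to deal with.

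For the exponents, substituting $\gamma_k=dk-k^2$ and expanding gives the identity
\[
2\gamma_{i_1+i_2}-\gamma_{i_1}-\gamma_{i_1+2i_2}=2i_2^2,
\]
so the exponent of $\epsilon$ on the left minus that on the right is $2i_2^2-2i_2=2i_2(i_2-1)\geq 0$. Since we are working with $\epsilon\leq 1$, this forces $\epsilon^{2\gamma_{i_1+i_2}}\leq\epsilon^{2i_2+\gamma_{i_1}+\gamma_{i_1+2i_2}}$.

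For the norms, the task reduces to $\|u\|\cdot\|u\wedge v\wedge w\|\leq\|u\wedge v\|\cdot\|u\wedge w\|$. Assuming $i_1>0$ and $u\neq 0$, write $u=u_1\wedge\dots\wedge u_{i_1}$, let $U=\langle u_1,\dots,u_{i_1}\rangle$, and let $\pi\colon\mathbb{R}^d\to U^\perp$ be the orthogonal projection, extended to monomials by $\pi^{\wedge}(x_1\wedge\dots\wedge x_j)=\pi(x_1)\wedge\dots\wedge\pi(x_j)$. Since wedging $u$ with anything in $U$ vanishes, $u\wedge v=u\wedge\pi^{\wedge}(v)$, and the orthogonality of $U$ and $U^\perp$ gives $\|u\wedge v\|=\|u\|\cdot\|\pi^{\wedge}(v)\|$, with the analogous identities for $u\wedge w$ and $u\wedge v\wedge w$. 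Dividing through, everything collapses to the classical bound $\|\pi^{\wedge}(v)\wedge\pi^{\wedge}(w)\|\leq\|\pi^{\wedge}(v)\|\cdot\|\pi^{\wedge}(w)\|$ on the norm of a wedge product in exterior algebra. The case $i_1=0$ is immediate, since then $\|u\|=1$ and $\pi$ is the identity.

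Combining the two inequalities yields the Lemma. There is no substantive obstacle here: the whole content is the short exponent computation, combined with the factorisation of the wedge-norm across $U\oplus U^\perp$. The technical heavy lifting of this section lies in the preceding argument; the function $\widetilde{\varphi}_{\epsilon}$ is designed precisely so that the convexity lemma in the $SO(2,1)$ and $SO(2,2)$ cases is cheap.
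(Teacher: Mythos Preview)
Your proof is correct and follows the same two-ingredient structure as the paper's: an exponent identity for the $\gamma_i$ together with a norm inequality for monomial wedge products. The paper simply cites Lemma~5.6 of \cite{MR1609447} for the bound $\|u\|\,\|u\wedge v\wedge w\|\leq\|u\wedge v\|\,\|u\wedge w\|$, whereas you supply a direct proof via orthogonal projection onto $U^\perp$ and submultiplicativity of the wedge-product norm; both routes are standard and equally short. On the exponent side, your computation $2\gamma_{i_1+i_2}-\gamma_{i_1}-\gamma_{i_1+2i_2}=2i_2^{2}$ is the right one (the paper records this as $2i_2$, evidently a typo), and your added hypothesis $\epsilon\leq 1$ is exactly what is needed to pass from $\epsilon^{2i_2^{2}}$ to $\epsilon^{2i_2}$. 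Since the lemma is only ever invoked for small $\epsilon$, this is harmless. One small point you gloss over: when $i_1+2i_2=d$ the convention forces $\widetilde{\varphi}_{\epsilon}(u\wedge v\wedge w)=1$ rather than $\|u\wedge v\wedge w\|^{-1}$; here the unimodularity of $\Lambda$ (giving $\|u\wedge v\wedge w\|\in\mathbb{Z}_{\geq 1}$ when nonzero) is what closes the gap, and is the only place the lattice hypothesis is actually used.
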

\begin{proof}
This is a direct consequence of Lemma 5.6 from \cite{MR1609447} and
the fact that $2\gamma_{i_{1}+i_{2}}-\gamma_{i_{1}}-\gamma_{i_{1}+2i_{2}}=2i_{2}$.
\end{proof}
The following Lemma is used to bound the spherical averages. It is
analogous to Lemma \ref{lem:r1>3r2>1}, (see also Lemma 5.5 from \cite{MR1609447}).
It explains why in this case the fixed vectors do not cause problems. 
\begin{lem}
\label{lem:derivative bound}Suppose $r_{1}\geq2$ and $r_{2}\geq1$.
Then for all $t\geq0$ and $v\in\bigwedge\left(\mathbb{R}^{d}\right)\setminus\left\{ 0\right\} $,
\[
\int_{K_{I}}\left\Vert a_{t}kv\right\Vert ^{-1}d\nu_{I}\left(k\right)\leq\left\Vert v\right\Vert ^{-1}.
\]
\end{lem}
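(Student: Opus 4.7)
The plan is to mirror Lemma 5.5 of \cite{MR1609447} by reducing the $K_I$-spherical average to a circle-average inside an embedded $SO(2,1)$-subgroup.

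Since $r_1 \geq 2$ and $r_2 \geq 1$, let $H_0 \cong SO(2,1)$ be the subgroup of $H_I$ acting nontrivially only on $\langle e_{s+1}, e_{s+2}, e_d\rangle$, on which $Q_0$ restricts to $2v_{s+1}v_d + v_{s+2}^2$ (signature $(2,1)$). Then $\{a_t\} \subset H_0$, and its maximal compact $K_0 \cong SO(2)$, realised as $H_0 \cap O_d(\mathbb{R})$, sits inside $K_I = H_I \cap O_d(\mathbb{R})$. Normalise the Haar measure $\nu_0$ so that $\nu_0(K_0) = 1$.

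The central step is the circle-average inequality: for all $w \in \bigwedge(\mathbb{R}^d) \setminus \{0\}$ and $t \geq 0$,
\[
\int_{K_0} \|a_t k_0 w\|^{-1} d\nu_0(k_0) \leq \|w\|^{-1}.
\]
I would prove this by decomposing the restriction of $\bigwedge(\mathbb{R}^d)$ to $H_0$ into irreducible subrepresentations (symmetric powers of the $3$-dimensional defining representation of $SO(2,1)$), refining further into $K_0$-weight spaces (orthogonal with respect to the Euclidean norm since $K_0 \subset O_d$), and reducing to the elementary inequality
\[
\int_0^{2\pi} \bigl(e^{2t}\cos^2\theta + e^{-2t}\sin^2\theta\bigr)^{-1/2}\, \frac{d\theta}{2\pi} \leq 1,
\]
which expresses the classical $SL_2(\mathbb{R})$-subharmonicity of $\|\cdot\|^{-1}$ on $\mathbb{R}^2$ (equality at $t = 0$, monotonic decrease for $t > 0$, verifiable by direct computation).

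Once the circle estimate is in hand, extending to $K_I$ is a Fubini exercise. For each $k \in K_I$, applying it with $w = kv$ and using $\|kv\| = \|v\|$ (since $K_I \subset O_d$) yields
\[
\int_{K_0} \|a_t k_0 k v\|^{-1} d\nu_0(k_0) \leq \|v\|^{-1}.
\]
Integrating over $K_I$ against $\nu_I$, interchanging the order of integration, and substituting $k \mapsto k_0^{-1} k$ (valid by left-invariance of $\nu_I$ since $K_0 \subset K_I$) turns the inner integral into the desired $\int_{K_I}\|a_t k v\|^{-1} d\nu_I(k)$. The estimate then follows since $\nu_0(K_0) = \nu_I(K_I) = 1$. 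The main obstacle is the circle estimate, which must be handled carefully because the $H_0$-isotypic decomposition of $\bigwedge(\mathbb{R}^d)$ is not in general Euclidean-orthogonal; passing to the finer $K_0$-weight decomposition is what makes the argument go through. The $K_I$-extension is essentially automatic.
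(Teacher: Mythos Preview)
Your reduction from $K_I$ to the circle subgroup $K_0$ via Fubini and left-invariance is correct, and the $K_0$-circle estimate you need is indeed true. However, the paper takes a quite different and more direct route: it differentiates $F_v(t)=\int_{K_I}\lVert a_tkv\rVert^{-1}\,d\nu_I(k)$ in $t$ and shows $F_v'(t)\le 0$ by a symmetry argument. Writing $\pi^{\pm}$ for the projections onto the $e^{\pm t}$-eigenspaces of $a_t$ on $\bigwedge(\mathbb{R}^d)$, the form matrix $Q_0$ satisfies $Q_0=Q_0^{T}=Q_0^{-1}$ and interchanges $\pi^{+}$ with $\pi^{-}$; since $\pm Q_0\in K_I$, one gets $\int_{K_I}\lVert\pi^{-}(kv)\rVert^{2}\,d\nu_I=\int_{K_I}\lVert\pi^{+}(kv)\rVert^{2}\,d\nu_I$, and the sign of $F_v'(t)$ drops out immediately. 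No reduction to $SO(2,1)$, no representation-theoretic decomposition, and no appeal to the $2$-dimensional elliptic integral are needed.

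Your approach is valid but more circuitous, and the sketch of the circle estimate is where it is thinnest. The single displayed inequality $\int_0^{2\pi}(e^{2t}\cos^2\theta+e^{-2t}\sin^2\theta)^{-1/2}\,d\theta/2\pi\le 1$ covers only a very special $w$; for a general $w\in\bigwedge(\mathbb{R}^d)$ the integrand $\lVert a_tR_\theta w\rVert^{2}$ is a more complicated trigonometric polynomial in $\theta$ (for instance $w=f_3$ gives the constant $\cosh 2t$, and $w=f_1$ gives $\cos^{2}\theta\,\cosh 2t+\sin^{2}\theta$), and it is not clear how passing to the $K_0$-weight decomposition alone collapses all cases to that one integral. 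The cleanest way to fill this in is in fact the paper's own trick, specialised to $K_0$: the rotation $R_\pi\in K_0$ sends $e_{s+1}\mapsto -e_d$ and $e_d\mapsto -e_{s+1}$, hence swaps $\pi^{+}$ and $\pi^{-}$, and the derivative argument goes through verbatim for the circle average. So while your modular reduction is a legitimate alternative, what it ultimately relies on is the same eigenspace-swapping symmetry that the paper exploits directly at the level of the full group $K_I$.
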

\begin{proof}
Let $F_{v}\left(t\right)=\int_{K_{I}}\left\Vert a_{t}kv\right\Vert ^{-1}d\nu_{I}\left(k\right)$.
We will show that $\frac{d}{dt}F_{v}\left(t\right)\leq0$ for all
$t\geq0$ and $v\in\bigwedge\left(\mathbb{R}^{d}\right)\setminus\left\{ 0\right\} $,
from which it is clear that the claim of the Lemma follows. Let $\pi^{-}$
and $\pi^{+}$ be the projections from $\bigwedge\left(\mathbb{R}^{d}\right)$
onto the contracting and expanding eigenspaces of $a_{t}$ respectively.
Note that 
\begin{alignat}{1}
\frac{d}{dt}F_{v}\left(t\right) & =\int_{K_{I}}\frac{e^{-2t}\left\Vert \pi^{-}\left(kv\right)\right\Vert ^{2}-e^{2t}\left\Vert \pi^{+}\left(kv\right)\right\Vert ^{2}}{\left\Vert a_{t}kv\right\Vert ^{3}}d\nu_{I}\left(k\right)\nonumber \\
 & \leq\left(\frac{\left\Vert a_{t}\right\Vert }{\left\Vert v\right\Vert }\right)^{3}\int_{K_{I}}e^{-2t}\left\Vert \pi^{-}\left(kv\right)\right\Vert ^{2}-e^{2t}\left\Vert \pi^{+}\left(kv\right)\right\Vert ^{2}d\nu_{I}\left(k\right).\label{eq:new proof 1}
\end{alignat}
Let $Q_{0}$ also denote the matrix that defines the quadratic form
$Q_{0}$. Note that $\left\Vert \pi^{-}\left(Q_{0}v\right)\right\Vert =\left\Vert \pi^{+}\left(v\right)\right\Vert $
and $\left\Vert \pi^{+}\left(Q_{0}v\right)\right\Vert =\left\Vert \pi^{-}\left(v\right)\right\Vert $
for all $v\in\bigwedge\left(\mathbb{R}^{d}\right)$. Because $Q_{0}^{T}=Q_{0}=Q_{0}^{-1}$,
if $\det\left(Q_{0}\right)=1$ then $Q_{0}\in K_{I}$ or if $\det\left(Q_{0}\right)=-1$
then $-Q_{0}\in K_{I}$. This means that $Q_{0}K_{I}\left(v-\tau_{0}\left(v\right)\right)=K_{I}\pm\left(v-\tau_{0}\left(v\right)\right)$
and thus
\begin{equation}
\int_{K_{I}}\left\Vert \pi^{-}\left(kv\right)\right\Vert ^{2}d\nu_{I}\left(k\right)=\int_{K_{I}}\left\Vert \pi^{+}\left(Q_{0}\left(kv\right)\right)\right\Vert ^{2}d\nu_{I}\left(k\right)=\int_{K_{I}}\left\Vert \pi^{+}\left(kv\right)\right\Vert ^{2}d\nu_{I}\left(k\right).\label{eq:new proof 2}
\end{equation}
Therefore, using \eqref{eq:new proof 1} and \eqref{eq:new proof 2}
we have 
\[
\frac{d}{dt}F_{v}\left(t\right)\leq\left(\frac{\left\Vert a_{t}\right\Vert }{\left\Vert v\right\Vert }\right)^{3}\int_{K_{I}}\left\Vert \pi^{+}\left(kv\right)\right\Vert ^{2}d\nu_{I}\left(k\right)\left(e^{-2t}-e^{2t}\right)\leq0
\]
for all $t\geq0$ and $v\in\bigwedge\left(\mathbb{R}^{d}\right)\setminus\left\{ 0\right\} $
as required. 
\end{proof}

\subsubsection{Functions on $H_{I}$.}

Define $\widetilde{f}_{\Delta,\epsilon}:H_{I}\rightarrow\mathbb{R}$
by 
\[
\widetilde{f}_{\Delta,\epsilon}\left(h\right)=\sum_{i=1}^{d}\max_{v\in\Omega_{i}\left(h\Delta\right)}\widetilde{\varphi}_{\epsilon}\left(v\right).
\]
Note that for all $\epsilon>0$ there exists some constant $c_{\epsilon}>0$
such that for any unimodular lattice $\Lambda$, 
\[
\max_{v\in\Omega\left(\Lambda\right)}\left\Vert v\right\Vert ^{-1}\leq c_{\epsilon}\max_{v\in\Omega\left(\Lambda\right)}\widetilde{\varphi}_{\epsilon}\left(v\right)\leq c_{\epsilon}\sum_{i=1}^{d}\max_{v\in\Omega_{i}\left(\Lambda\right)}\widetilde{\varphi}_{\epsilon}\left(v\right).
\]
In view of this and \eqref{eq:forgot}, the proof of part II of Theorem
\ref{thm:upper bound-1} will be complete provided that that the conditions
\eqref{enu:contraction 1-1}-\eqref{enu:contraction 4-1} from Proposition
\ref{lem:SL(2,R)} are satisfied by the functions $\widetilde{f}_{\Delta,\epsilon}$
for some $\epsilon>0$. It is clear that \foreignlanguage{english}{$\widetilde{f}_{\Delta,\epsilon}$}
is left $K_{I}$ invariant. Also, since $\left\Vert \rho\left(h^{-1}\right)\right\Vert ^{-1}\leq\left\Vert hv\right\Vert /\left\Vert v\right\Vert \leq\left\Vert \rho\left(h\right)\right\Vert $
for all $v\in\Omega$ and $h\in H_{I}$, $\widetilde{f}_{\Delta,\epsilon}$
also satisfies condition \eqref{enu:contraction 1-1} of Proposition
\ref{lem:SL(2,R)}. We also have that $\widetilde{f}_{\Delta,\epsilon}\left(1\right)<\infty$.
It remains to show that $\widetilde{f}_{\Delta,\epsilon}$ satisfies
condition \eqref{enu:contraction 4-1} of Proposition \ref{lem:SL(2,R)}. 
\begin{lem}
Suppose $r_{1}=2$ and $r_{2}=1$ or $r_{1}=r_{2}=2$. Then there
exists $\epsilon>0$ and $t_{0}>0$, such that for all $0\leq t<t_{0}$
and $h\in H_{I}$,
\[
\int_{K_{I}}\widetilde{f}_{\Delta,\epsilon}\left(a_{t}kh\right)d\nu_{I}\left(k\right)\leq\widetilde{f}_{\Delta,\epsilon}\left(h\right).
\]
\end{lem}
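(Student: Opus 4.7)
The plan is to verify that for suitable $\epsilon, t_0 > 0$ the function $\widetilde{f}_{\Delta,\epsilon}$ satisfies the contraction condition (\ref{enu:contraction 4-1}) of Lemma~\ref{lem:SL(2,R)}. The argument parallels the proof of part~I of Theorem~\ref{thm:upper bound-1}, with Lemma~\ref{lem:simple convexity} playing the role of Lemma~\ref{lem: B+Q convexity} and, crucially, the pointwise one-vector bound Lemma~\ref{lem:derivative bound} replacing the spherical-average bound Lemma~\ref{lem:r1>3r2>1}. Write $\psi_i(h\Delta) := \max_{v \in \Omega_i(h\Delta)} \widetilde{\varphi}_\epsilon(v)$ so that $\widetilde{f}_{\Delta,\epsilon}(h) = \sum_{i=1}^{d} \psi_i(h\Delta)$; the goal is to reduce, degree by degree, the $K_I$-integral of $\max_v \widetilde{\varphi}_\epsilon(a_t k v)$ to the integral of $\widetilde{\varphi}_\epsilon(a_t k v_i^*)$ for a single vector $v_i^*$, where Lemma~\ref{lem:derivative bound} applies directly.

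To this end, fix $t_0 > 0$, let $m_0 = \sup_{0 \le t \le t_0} \|\rho(a_t)\|$ (operator norm on $\bigwedge(\mathbb{R}^d)$), and define the near-maximizer set
\[
\Psi_i(h\Delta) = \{v \in \Omega_i(h\Delta) : \widetilde{\varphi}_\epsilon(v) \ge m_0^{-2} \psi_i(h\Delta)\}.
\]
A short calculation using $m_0^{\pm 1}$-multiplicative bounds on $\widetilde{\varphi}_\epsilon \circ a_t k$ shows that any $v \notin \Psi_i(h\Delta)$ is strictly dominated by any $v^* \in \Psi_i(h\Delta)$ after applying $a_t k$, hence $\max_{v \in \Omega_i(h\Delta)} \widetilde{\varphi}_\epsilon(a_t k v) = \max_{\psi \in \Psi_i(h\Delta)} \widetilde{\varphi}_\epsilon(a_t k \psi)$ for every $k \in K_I$ and $0 \le t \le t_0$.

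The crux is to show that, for $\epsilon$ small enough (e.g.\ $m_0^4 \epsilon^2 < 1$), either $\psi_i(h\Delta)$ is bounded by an explicit constant $b_i$, or $\Psi_i(h\Delta)$ consists of a single vector up to sign. If $v_0, w_0 \in \Psi_i(h\Delta)$ are non-colinear, decompose $v_0 = u \wedge v$, $w_0 = u \wedge w$ where $u \in \Omega_{i_1}(h\Delta)$ spans the intersection of the subspaces carried by $v_0$ and $w_0$, with $i_1 < i$ and $v, w \in \Omega_{i_2}(h\Delta)$. Lemma~\ref{lem:simple convexity} then yields
\[
m_0^{-4} \psi_i(h\Delta)^2 \le \widetilde{\varphi}_\epsilon(v_0)\widetilde{\varphi}_\epsilon(w_0) \le \epsilon^{2i_2}\widetilde{\varphi}_\epsilon(u)\widetilde{\varphi}_\epsilon(u \wedge v \wedge w),
\]
and a case analysis mirroring Cases 2.1--2.4 of the proof of part~I, depending on whether $i_1 = 0$ and whether $i_1 + 2i_2 = d$ (so that the corresponding factor collapses to $1$ by the convention $\widetilde{\varphi}_\epsilon \equiv 1$ in top and bottom degrees), either contradicts the choice of $\epsilon$ or produces an explicit upper bound on $\psi_i(h\Delta)$.

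With singleton-up-to-sign established, the evenness $\widetilde{\varphi}_\epsilon(-v) = \widetilde{\varphi}_\epsilon(v)$ collapses the $K_I$-integrand to $\widetilde{\varphi}_\epsilon(a_t k v_i^*)$, and Lemma~\ref{lem:derivative bound} delivers $\int_{K_I} \psi_i(a_t k h\Delta)\, d\nu_I(k) \le \psi_i(h\Delta)$; in the bounded regime the same integral is at most $m_0 b_i$, whose aggregate contribution is made arbitrarily small by shrinking $t_0$ so that $m_0$ is close to $1$. Summing over $i$ gives the desired inequality. The principal obstacle is the uniqueness claim for $\Psi_i(h\Delta)$: Lemma~\ref{lem:simple convexity} only gives a clean contradiction when neither of the degrees $i_1$ and $i_1 + 2i_2$ degenerates, and one must track the constants through all sub-cases, in particular when $i > d/2$ forces $i_1 \ge 2i - d > 0$ on dimension grounds, so that the $\widetilde{\varphi}_\epsilon(u)$ factor cannot be simplified away.
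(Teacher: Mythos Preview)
Your overall strategy matches the paper's: for each degree $i$ reduce the maximum over $\Omega_i(h\Delta)$ to a single near-maximizer via the convexity lemma, then apply Lemma~\ref{lem:derivative bound} to that one vector and sum over $i$. The paper, however, runs the uniqueness step in one line, with no case split and no ``bounded regime''. The point you are missing is that, unlike Lemma~\ref{lem: B+Q convexity}, Lemma~\ref{lem:simple convexity} is already stated uniformly for all $i_1 \ge 0$ and $i_2 > 0$: when $i_1 = 0$ or $i_1 + 2i_2 = d$ the convention $\widetilde{\varphi}_\epsilon \equiv 1$ in degrees $0$ and $d$ simply sets the corresponding factor to $1$, and since $\widetilde{f}_{\Delta,\epsilon}(h) \ge \psi_d = 1$ one still has $\widetilde{\varphi}_\epsilon(u)\,\widetilde{\varphi}_\epsilon(u \wedge v \wedge w) \le \widetilde{f}_{\Delta,\epsilon}(h)^2$ in every case. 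Thus two non-colinear elements in degree $i$ always feed into the single chain
\[
\widetilde{f}_{\Delta,\epsilon}(h)^2 \;\le\; d^2 m_0^4\, \widetilde{\varphi}_\epsilon(u\wedge v)\,\widetilde{\varphi}_\epsilon(u\wedge w) \;\le\; d^2 m_0^4 \epsilon^{2i_2}\,\widetilde{f}_{\Delta,\epsilon}(h)^2,
\]
a contradiction for $\epsilon$ small; no ``explicit upper bound on $\psi_i$'' alternative ever arises, so the obstacle you anticipate dissolves.

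Your bounded-regime detour is therefore unnecessary, and it also fails to deliver the Lemma as stated: the claim that its contribution ``is made arbitrarily small by shrinking $t_0$'' is incorrect (as $m_0 \to 1$ the crude bound $m_0 b_i$ tends to $b_i$, not to $0$). At best your argument would establish condition~(\ref{enu:contraction 4-1}) of Lemma~\ref{lem:SL(2,R)} with some additive constant $b > 0$, whereas the present Lemma asserts the inequality with $b = 0$.
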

\begin{proof}
Let $m_{0}=\left\Vert \rho\left(a_{t_{0}}\right)\right\Vert $. Then,
for all $v\in\bigwedge\left(\mathbb{R}^{d}\right)$ and $0\leq t<t_{0}$,
\begin{equation}
m{}_{0}^{-1}\leq\left\Vert a_{t}v\right\Vert /\left\Vert v\right\Vert \leq m_{0}.\label{eq:ineq2-1}
\end{equation}
It follows from the definition of $\widetilde{\varphi}_{\epsilon}$
and \eqref{eq:ineq2-1} that for all $0\leq t<t_{0}$, 
\begin{equation}
m{}_{0}^{-1}\widetilde{\varphi}_{\epsilon}\left(v\right)\leq\widetilde{\varphi}_{\epsilon}\left(a_{t}v\right)\leq m_{0}\widetilde{\varphi}_{\epsilon}\left(v\right).\label{eq:contraction 2-1}
\end{equation}
Let 
\[
\Psi\left(h\Delta\right)=\bigcup_{i=1}^{d}\left\{ v\in\Omega_{i}\left(h\Delta\right):\max_{v\in\Omega_{i}\left(h\Delta\right)}\widetilde{\varphi}_{\epsilon}\left(v\right)\leq m_{0}^{2}\widetilde{\varphi}_{\epsilon}\left(v\right)\right\} .
\]
Now we show that: For $\epsilon$ small enough the set $\Psi\left(h\Delta\right)$
contains only one element up to sign change in each degree. To see
this, assume that for some $0<i<d$, $\Psi\left(h\Delta\right)\cap\Omega\left(h\Delta\right)$
contains two non-colinear elements, $v_{0}$ and $w_{0}$. We can
write $v_{0}=u\wedge v$ and $w_{0}=u\wedge w$ where $u\in\Omega_{i_{1}}\left(h\Delta\right),$$v\in\Omega_{i_{2}}\left(h\Delta\right)$
and $w\in\Omega_{i_{2}}\left(h\Delta\right)$ with $i_{1}\geq0$ and
$i_{2}>0$. In this case
\[
\widetilde{f}_{\Delta,\epsilon}\left(h\right)^{2}\leq d^{2}m_{0}^{4}\widetilde{\varphi}_{\epsilon}\left(u\wedge v\right)\widetilde{\varphi}_{\epsilon}\left(u\wedge w\right)\leq d^{2}m_{0}^{4}\epsilon^{2i_{2}}\widetilde{f}_{\Delta,\epsilon}\left(h\right)^{2},
\]
 by Lemma \ref{lem:simple convexity}. Hence the claim is true since
taking $\epsilon$ small enough gives a contradiction.

In view of this discussion we can suppose that $\Psi\left(h\Delta\right)=\left\{ \psi_{i}\right\} _{i=1}^{d}$
where $\psi_{i}$ has degree $i$. Let $v\in\Omega_{i}\left(h\Delta\right)$
be arbitrary. If $v\not\in\Psi\left(h\Delta\right)$, then $\max_{v\in\Omega_{i}\left(h\Delta\right)}\widetilde{\varphi}_{\epsilon}\left(v\right)>m_{0}^{2}\widetilde{\varphi}_{\epsilon}\left(v\right),$
and by left $K_{I}$ invariance of $\widetilde{\varphi}_{\epsilon}$
and \eqref{eq:contraction 2-1} for all $k\in K_{I}$ we have 
\begin{equation}
\widetilde{\varphi}_{\epsilon}\left(a_{t_{0}}kv\right)\leq m_{0}\widetilde{\varphi}_{\epsilon}\left(v\right)\leq m_{0}^{-1}\max_{v\in\Omega_{i}\left(h\Delta\right)}\widetilde{\varphi}_{\epsilon}\left(v\right)=m_{0}^{-1}\widetilde{\varphi}_{\epsilon}\left(\psi_{i}\right)\leq\widetilde{\varphi}_{\epsilon}\left(a_{t_{0}}k\psi_{i}\right).\label{eq:contraction 4-1}
\end{equation}
If $v\in\Psi\left(h\Delta\right)$, then \eqref{eq:contraction 4-1}
holds for obvious reasons. Therefore \eqref{eq:contraction 4-1} holds
for all $v\in\Omega$. Thus using the definition of $\widetilde{f}_{\Delta,\epsilon}$
and \eqref{eq:contraction 4-1} we get 
\begin{alignat}{1}
\int_{K_{I}}\widetilde{f}_{\Delta,\epsilon}\left(a_{t_{0}}kh\right)d\nu_{I}\left(k\right) & =\sum_{i=1}^{d}\int_{K_{I}}\max_{v\in\Omega_{i}\left(h\Delta\right)}\widetilde{\varphi}_{\epsilon}\left(a_{t_{0}}kv\right)d\nu_{I}\left(k\right)\leq\sum_{i=1}^{d}\int_{K_{I}}\widetilde{\varphi}_{\epsilon}\left(a_{t_{0}}k\psi_{i}\right)d\nu_{I}\left(k\right).\label{eq:contraction 5-1}
\end{alignat}
By Lemma \ref{lem:derivative bound} there exists $t_{0}>0$, so that
for any $v\in\bigwedge\left(\mathbb{R}^{d}\right)$ and all $0\leq t<t_{0}$,
\begin{equation}
\int_{K_{I}}\widetilde{\varphi}_{\epsilon}\left(a_{t_{0}}k\psi_{i}\right)d\nu_{I}\left(k\right)\leq\widetilde{\varphi}_{\epsilon}\left(\psi_{i}\right),\label{eq:contarction 6-1}
\end{equation}
 for each $\psi_{i}\in\Psi\left(h\Delta\right)$. The claim of the
Lemma follows from \eqref{eq:contraction 5-1} and \eqref{eq:contarction 6-1}. 
\end{proof}

\section{Ergodic Theorems.\label{sec:Ergodic-theorems.}}

For subgroups $W_{1}$ and $W_{2}$ of $G_{g}$, let $X\left(W_{1},W_{2}\right)=\left\{ g\in G_{g}:W_{2}g\subset gW_{1}\right\} $.
As in \cite{MR1609447} the ergodic theory is based on Theorem 3 from
\cite{MR1237827} reproduced below in a form relevant to the current
situation. 
\begin{thm}
\label{thm:dani and margulis}Suppose $r_{1}\geq2$ and $r_{2}\geq1$.
Let $g\in\mathcal{C}_{SL}\left(r_{1},r_{2}\right)$ be arbitrary.
Let $U=\left\{ u_{t}:t\in\mathbb{R}\right\} $ be a unipotent one
parameter subgroup of $G_{g}$ and $\phi$ be a bounded continuous
function on $G_{g}/\Gamma_{g}$. Let $\mathcal{D}$ be a compact subset
of $G_{g}/\Gamma_{g}$ and let $\epsilon>0$ be given. Then there
exist finitely many proper closed subgroups $H_{1},\ldots,H_{k}$
of $G_{g}$, such that $H_{i}\cap\Gamma_{g}$ is a lattice in $H_{i}$
for all $i$, and compact subsets $C_{1},\ldots,C_{k}$ of $X\left(H_{1},U\right),\ldots,X\left(H_{k},U\right)$
respectively such that for all compact subsets $F$ of $\mathcal{D}-\bigcup_{1\leq i\leq k}C_{i}\Gamma_{g}/\Gamma_{g}$
there exists a $T_{0}>0$ such that for all $x\in F$ and $T>T_{0}$,
\[
\left|\frac{1}{T}\int_{0}^{T}\phi\left(u_{t}x\right)dt-\int_{G_{g}/\Gamma_{g}}\phi d\mu_{g}\right|<\epsilon.
\]
\end{thm}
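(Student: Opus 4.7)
The statement is a direct specialization of Theorem 3 of Dani--Margulis (reference \cite{MR1237827}) to the homogeneous space $G_g/\Gamma_g$ with the choice of unipotent subgroup $U$ and compact set $\mathcal{D}$ arising in our setting. Accordingly, the plan is not to reprove the result but to verify that the hypotheses of the general theorem are met and then invoke it as a black box. The verifications are all recorded in Section~\ref{sec:Set-up.}: $G_g$ is a connected Lie group (in fact semisimple, as the identity component of $\mathrm{SO}(Q_0^g)$); $\Gamma_g = G_g \cap SL_d(\mathbb{Z})$ is a lattice in $G_g$ by Borel--Harish-Chandra, using that $Q_0^g$ is defined over $\mathbb{Q}$; $U = \{u_t\}$ is a one-parameter unipotent subgroup by assumption; and $\phi$ is bounded and continuous. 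These are exactly the data required by the general theorem, so the conclusion transfers verbatim.

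For completeness I would sketch why the theorem in \cite{MR1237827} holds. The core input is Ratner's measure classification: every ergodic $U$-invariant probability measure on $G_g/\Gamma_g$ is the $L$-invariant measure supported on a closed orbit $Lx$ for some closed subgroup $L \supset U$ with $x^{-1}Lx \cap \Gamma_g$ a lattice in $x^{-1}Lx$. Combined with Ratner's pointwise equidistribution theorem, this yields, for each individual $x$, convergence of the Birkhoff averages $\tfrac{1}{T}\int_0^T \phi(u_t x)\,dt$ to the integral of $\phi$ against the Haar measure on the minimal such orbit through $x$. In particular, if $x$ does not lie in any proper homogeneous subvariety associated to a pair $(H,U)$ with $H \cap \Gamma_g$ a lattice and $U \subset g' H g'^{-1}$ for some $g'$, then the limit is $\int \phi\,d\mu_g$.

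The passage from pointwise to uniform convergence on compact sets, disjoint from finitely many prescribed singular tubes, is carried out via the Dani--Margulis linearization procedure. Each proper closed subgroup $H$ with $H \cap \Gamma_g$ a lattice is encoded by a distinguished vector $p_H$ in an appropriate representation $V_H$ of $G_g$, in such a way that the sets $X(H,U)$ are recovered as preimages of the orbit $G_g \cdot p_H$ intersected with a $U$-invariant linear subvariety. One then applies the $(C,\alpha)$-good / non-divergence estimates of Dani--Margulis for polynomial trajectories of unipotent flows in finite-dimensional representations to show that for any compact set of base points avoiding fixed compact sets $C_i \subset X(H_i,U)$, the orbit spends only a small proportion of its time near the corresponding singular tubes. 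Finally, a compactness and countability argument on the set of conjugacy classes of such $H$ shows that only finitely many $H_i$ contribute inside $\mathcal{D}$ for a given $\epsilon$; combining this bound with the measure-classification convergence yields the uniform estimate in the statement.

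The main obstacle, were one to attempt this from scratch, is precisely the linearization plus non-divergence step, which controls how much time a unipotent trajectory starting outside the compact sets $C_i$ can spend in a neighbourhood of the closed $H_i$-orbit. In the present paper, however, we only need to cite the statement, so the one genuine verification is the remark, already made in Section~\ref{sec:Set-up.}, that $G_g$ is connected semisimple and $\Gamma_g$ is a lattice; everything else is carried by the theorem of \cite{MR1237827}.
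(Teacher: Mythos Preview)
Your proposal is correct and matches the paper's treatment: the paper does not prove this statement at all, but simply reproduces it as a quotation of Theorem~3 from \cite{MR1237827}, specialized to $G_g/\Gamma_g$. Your additional sketch of the Ratner measure classification and Dani--Margulis linearization machinery is accurate but goes beyond what the paper provides; the paper only remarks (Remark~\ref{rational subgroups}) that the $H_i$ arising are defined over $\mathbb{Q}$, which is the one extra feature actually used downstream.
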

\begin{rem}
\label{rational subgroups}By construction the subgroups $H_{i}$
occurring are such that $H_{i}\cap\Gamma_{g}$ is Zariski dense in
$H_{i}$ and hence $H_{i}$ are defined over $\mathbb{Q}$. For a
precise reference see Theorem 3.6.2 and Remark 3.4.2 of \cite{MR1928528}.
\end{rem}
The next result is a reworking of Theorem 4.3 from \cite{MR1609447}.
The difference is that in Lemma \ref{thm:4.4} the identity is fixed
as the base point for the flow and the condition that $H_{g}$ be
maximal is dropped. 
\begin{lem}
\label{thm:4.4}Suppose $r_{1}\geq2$ and $r_{2}\geq1$. Let $g\in\mathcal{C}_{SL}\left(r_{1},r_{2}\right)$
be arbitrary. Let $U=\left\{ u_{t}:t\in\mathbb{R}\right\} $ be a
one parameter unipotent subgroup of $H_{g}$, not contained in any
proper normal subgroup of $H_{g}$. Let $\phi$ be a bounded continuous
function on $G_{g}/\Gamma_{g}$. Then for all $\epsilon>0$ and $\eta>0$
there exists a $T_{0}>0$ such that for all $T>T_{0}$, 
\begin{equation}
\nu_{g}\left(\left\{ k\in K_{g}:\left|\frac{1}{T}\int_{0}^{T}\phi\left(u_{t}k\right)dt-\int_{G_{g}/\Gamma_{g}}\phi d\mu_{g}\right|>\epsilon\right\} \right)\leq\eta.\label{eq:erg conclusion 1}
\end{equation}
\end{lem}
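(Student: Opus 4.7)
The strategy is to combine Theorem~\ref{thm:dani and margulis} with a linearization argument that controls the exceptional set of initial points in $K_g$. Let $\pi:G_g\to G_g/\Gamma_g$ denote the quotient map and set $\mathcal{D}:=\pi(K_g)$; this is compact because $K_g$ is. Applying Theorem~\ref{thm:dani and margulis} to $\mathcal{D}$ and the given $\epsilon$ produces proper closed $\mathbb{Q}$-subgroups $H_1,\dots,H_m$ of $G_g$ (see Remark~\ref{rational subgroups}) and compact sets $C_i\subset X(H_i,U)$ such that the desired equidistribution inequality holds uniformly on any compact $F\subset\mathcal{D}\setminus\bigcup_i\pi(C_i)$ for all sufficiently large $T$. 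To deduce \eqref{eq:erg conclusion 1} it therefore suffices to produce, for each $\eta>0$, such a set $F$ with $\nu_g(\{k\in K_g:k\Gamma_g\in F\})>1-\eta$; by inner regularity this reduces to showing that each set
\[
B_i:=\{k\in K_g:k\Gamma_g\in\pi(X(H_i,U))\}
\]
has $\nu_g$-measure zero.

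For the measure estimate we invoke the standard linearization of $H_i$ (cf.\ Section~3 of \cite{MR1928528} and Propositions~3.1--3.4 of \cite{MR1609447}): choose a $\mathbb{Q}$-rational representation $\rho_i:G_g\to GL(V_i)$ and a rational vector $p_i\in V_i$ such that $H_i$ is the identity component of $\mathrm{Stab}_{G_g}(p_i)$ and $\rho_i(\Gamma_g)p_i$ is discrete in $V_i$. A direct computation gives $k\gamma\in X(H_i,U)\Rightarrow\rho_i(k\gamma)p_i\in V_i^U$, where $V_i^U$ is the subspace of $U$-fixed vectors, so
\[
B_i\subset\bigcup_{\gamma\in\Gamma_g}\bigl\{k\in K_g:\rho_i(k)\rho_i(\gamma)p_i\in V_i^U\bigr\}.
\]
Each set in this countable union is a real-analytic subvariety of the connected analytic manifold $K_g$, cut out by the vanishing of the projection of $\rho_i(k)\rho_i(\gamma)p_i$ to a complement of $V_i^U$; by analytic rigidity it is either $\nu_g$-null or the whole of $K_g$.

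The main obstacle is ruling out the latter case. Suppose $\{k\in K_g:\rho_i(k)q\in V_i^U\}=K_g$ with $q:=\rho_i(\gamma_0)p_i$. The condition $\rho_i(g)q\in V_i^U$ cuts out a real algebraic subvariety of $G_g$, and since $K_g$ is Zariski-dense in the complexification $(H_g)_{\mathbb{C}}$ (a standard property of maximal compact subgroups of real semisimple groups), this subvariety contains all of $H_g$. Differentiating the inclusion $\rho_i(H_g)q\subset V_i^U$ shows that $\mathrm{Ad}(H_g)\mathfrak{u}$ lies in the Lie algebra of $\gamma_0 H_i\gamma_0^{-1}$; its linear span is an $\mathrm{Ad}(H_g)$-invariant subspace of $\mathfrak{h}_g$ containing $\mathfrak{u}$, hence an ideal of $\mathfrak{h}_g$. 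By the hypothesis that $U$ lies in no proper normal subgroup of $H_g$, this ideal equals $\mathfrak{h}_g$, so $H_g\subset\gamma_0 H_i\gamma_0^{-1}\subsetneq G_g$. On the other hand, Corollary~3.2 and Lemma~3.4 of \cite{2011arXiv1111.4428S} (which use the irrationality condition on $M$) together with Ratner's orbit closure theorem imply that $H_g\cdot e\Gamma_g$ is dense in $G_g/\Gamma_g$, precluding any such containment in a proper $\mathbb{Q}$-subgroup. This contradiction yields $\nu_g(B_i)=0$ for every $i$ and completes the proof; the delicate step is precisely the Zariski-density argument extending $K_g$ to $H_g$ and the use of the hypothesis on $U$ to pass from the span of $\mathrm{Ad}(H_g)\mathfrak{u}$ to the entirety of $\mathfrak{h}_g$.
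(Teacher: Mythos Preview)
Your overall strategy matches the paper's: apply Theorem~\ref{thm:dani and margulis}, then show that the set of $k\in K_g$ landing in the exceptional tubes has $\nu_g$-measure zero by arguing that each piece is a proper analytic subvariety of $K_g$, the crux being to rule out the degenerate case where some piece equals all of $K_g$. However, your justification for that crux contains a genuine error.

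You assert that $K_g$ is Zariski-dense in $(H_g)_{\mathbb{C}}$, calling this ``a standard property of maximal compact subgroups of real semisimple groups.'' This is false. Here $H_g\cong SO(r_1,r_2)$ and $K_g\cong SO(r_1)\times SO(r_2)$; the latter is already a real algebraic subgroup, so its Zariski closure in $(H_g)_{\mathbb{C}}\cong SO_{r_1+r_2}(\mathbb{C})$ is $SO_{r_1}(\mathbb{C})\times SO_{r_2}(\mathbb{C})$, a proper subgroup. (You may be thinking of the fact that a compact real form of a \emph{complex} semisimple group is Zariski-dense in it; that does not apply, since $K_g$ is not a compact form of $(H_g)_{\mathbb{C}}$.) Consequently the step ``this subvariety contains all of $H_g$'' is unjustified, and your differentiation/ideal argument---correct in itself once $\rho_i(H_g)q\subset V_i^U$ is known---is left without foundation.

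The paper reaches the same conclusion $H_g\subset\gamma^{-1}H_i\gamma$ by a direct group-theoretic argument that avoids Zariski density altogether. Unwinding your linearization, the degenerate case says exactly that $k^{-1}Uk\subset\gamma^{-1}H_i\gamma$ for every $k\in K_g$. The subgroup $N:=\langle k^{-1}Uk:k\in K_g\rangle$ is normalized by $K_g$ (clear) and by $U$ (since $U\subset N$), hence by $\langle U\cup K_g\rangle$. Using that any connected subgroup of $H_g$ containing $K_g$ has the form $H'K_g$ with $H'\trianglelefteq H_g$ (from maximality of $K_g$ in each simple factor), together with the hypothesis that $U$ lies in no proper normal subgroup, one obtains $\langle U\cup K_g\rangle=H_g$; hence $N\trianglelefteq H_g$, and again by the hypothesis on $U$, $N=H_g\subset\gamma^{-1}H_i\gamma$. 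The contradiction with $H_i$ being a proper $\mathbb{Q}$-subgroup then follows (the paper invokes Lemma~3.7 and Proposition~4.1 of \cite{2011arXiv1111.4428S} rather than Corollary~3.2, Lemma~3.4 and Ratner, but your route to the contradiction is fine).
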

\begin{proof}
Let $H_{1},\ldots,H_{k}$ and $C_{1},\ldots,C_{k}$ be as in Theorem
\ref{thm:dani and margulis}. Let $\gamma\in\Gamma_{g}$, consider
$Y_{i}\left(\gamma\right)=K_{g}\cap X\left(H_{i},U\right)\gamma$.
Suppose that $Y_{i}\left(\gamma\right)=K_{g}$, then $Uk\gamma^{-1}\subset k\gamma^{-1}H_{i}$
for all $k\in K_{g}$. In other words
\begin{equation}
k^{-1}Uk\subset\gamma^{-1}H_{i}\gamma\quad\textrm{ for all }k\in K_{g}.\label{eq:erg1}
\end{equation}
The subgroup $\left\langle k^{-1}Uk:k\in K_{g}\right\rangle $ is
normalised by $U\cup K_{g}$ and clearly $\left\langle k^{-1}Uk:k\in K_{g}\right\rangle \subseteq\left\langle U\cup K_{g}\right\rangle \subseteq H_{g}$.
If $G$ is a simple Lie group with finite centre, with maximal compact
subgroup $K$, it follows from exercise A.3, chapter IV of \cite{MR1834454}
that $K$ is also a maximal proper subgroup of $G$. This means that
because $H_{g}$ is semisimple with finite centre, any connected subgroup
$L$ of $H_{g}$ containing $K_{g}$ can be represented as $L=H'K_{g}$
where $H'$ is a connected normal subgroup of $H_{g}$. Because $U$
is not contained in any proper normal subgroup of $H_{g}$, this implies
that $\left\langle U\cup K_{g}\right\rangle =H_{g}$. Therefore, $\left\langle k^{-1}Uk:k\in K_{g}\right\rangle $
is a normal subgroup of $H_{g}$ and because $U$ is not contained
in any proper normal subgroup of $H_{g}$, we have $\left\langle k^{-1}Uk:k\in K_{g}\right\rangle =H_{g}$.
This and (\ref{eq:erg1}) imply that $H_{g}\subset\gamma^{-1}H_{i}\gamma$.
Note that $\gamma\in SL_{d}\left(\mathbb{Z}\right)$ and by Remark
\ref{rational subgroups}, $H_{i}$ is defined over $\mathbb{Q}$.
Therefore, $\gamma^{-1}H_{i}\gamma$ is defined over $\mathbb{Q}$,
it follows from Theorem 7.7 of \cite{MR1278263} that $\overline{\gamma^{-1}H_{i}\gamma\cap SL_{d}\left(\mathbb{Q}\right)}=\gamma^{-1}H_{i}\gamma$.
Therefore Lemma 3.7 and Proposition 4.1 of \cite{2011arXiv1111.4428S}
imply that $\gamma^{-1}H_{i}\gamma=G_{g}$ which is a contradiction
and therefore $Y_{i}\left(\gamma\right)\subsetneq K_{g}$. This means
for all $1\leq i\leq k$, $Y_{i}\left(\gamma\right)$ is a submanifold
of strictly smaller dimension than $K_{g}$ and hence 
\begin{equation}
\nu_{g}\left(Y_{i}\left(\gamma\right)\right)=0.\label{eq:erg2.5}
\end{equation}
Note that because $C_{i}\subseteq X\left(H_{i},U\right)$, 
\[
K_{g}\cap\bigcup_{1\leq i\leq k}C_{i}\Gamma_{g}\subseteq K_{g}\cap\bigcup_{1\leq i\leq k}X\left(H_{i},U\right)\Gamma_{g}=\bigcup_{1\leq i\leq k}\bigcup_{\gamma\in\Gamma_{g}}Y_{i}\left(\gamma\right)
\]
and therefore \eqref{eq:erg2.5} implies 
\begin{equation}
\nu_{g}\left(K_{g}\cap\bigcup_{1\leq i\leq k}C_{i}\Gamma_{g}\right)=0.\label{eq:erg2}
\end{equation}
Let $\mathcal{D}$ be a compact subset of $G_{g}$ such that $K_{g}\subset\mathcal{D}$.
Then from (\ref{eq:erg2}), it follows that, for all $\eta>0$ there
exists a compact subset $F$ of $\mathcal{D}-\bigcup_{1\leq i\leq k}C_{i}\Gamma_{g}$,
such that 
\begin{equation}
\nu_{g}\left(F\cap K_{g}\right)\geq1-\eta.\label{eq:erg 3}
\end{equation}
From Theorem \ref{thm:dani and margulis}, for all $\epsilon>0$ there
exists a $T_{0}>0$, such that for all $x\in\left(F\cap K_{g}\right)/\Gamma_{g}$
and $T>T_{0}$,
\[
\left|\frac{1}{T}\int_{0}^{T}\phi\left(u_{t}x\right)dt-\int_{G_{g}/\Gamma_{g}}\phi d\mu_{g}\right|<\epsilon.
\]
Therefore if $k\in K_{g}$, $T>T_{0}$ and 
\[
\left|\frac{1}{T}\int_{0}^{T}\phi\left(u_{t}k\right)dt-\int_{G_{g}/\Gamma_{g}}\phi d\mu_{g}\right|>\epsilon,
\]
then $k\in K_{g}\setminus F$, but $\nu_{g}\left(K_{g}\setminus F\right)\leq\eta$
by (\ref{eq:erg 3}) and this implies (\ref{eq:erg conclusion 1}).\end{proof}
\begin{lem}
\label{cor:erg coro-1}Suppose $r_{1}\geq2$ and $r_{2}\geq1$. Let
$g\in\mathcal{C}_{SL}\left(r_{1},r_{2}\right)$ be arbitrary. Let
$U=\left\{ u_{t}:t\in\mathbb{R}\right\} $ be a one parameter unipotent
subgroup of $H_{g}$, not contained in any proper normal subgroup
of $H_{g}$. Let $\phi$ be a bounded continuous function on $G_{g}/\Gamma_{g}$.
Then for all $\epsilon>0$ and $\delta>0$ there exists a $T_{0}>0$
such that for all $T>T_{0}$, 
\[
\left|\frac{1}{\delta T}\int_{T}^{\left(1+\delta\right)T}\int_{K_{g}}\phi\left(u_{t}k\right)d\nu_{g}\left(k\right)dt-\int_{G_{g}/\Gamma_{g}}\phi d\mu_{g}\right|<\epsilon.
\]
\end{lem}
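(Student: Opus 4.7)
My plan is to reduce the claim to Lemma \ref{thm:4.4} via an elementary algebraic decomposition combined with Fubini, using the boundedness of $\phi$ to convert the almost-everywhere conclusion of that lemma into a genuine $L^1(K_g)$ bound. Set $A(t) = \int_{K_g}\phi(u_t k)\,d\nu_g(k)$ and $I = \int_{G_g/\Gamma_g}\phi\,d\mu_g$, so that the quantity to be estimated is $\bigl|\frac{1}{\delta T}\int_T^{(1+\delta)T} A(t)\,dt - I\bigr|$.

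The first step is to rewrite the interval $[T,(1+\delta)T]$ as the difference $[0,(1+\delta)T]\setminus[0,T]$, which gives the identity
\[
\frac{1}{\delta T}\int_T^{(1+\delta)T} A(t)\,dt \;=\; \frac{1+\delta}{\delta}\cdot\frac{1}{(1+\delta)T}\int_0^{(1+\delta)T}\!A(t)\,dt \;-\; \frac{1}{\delta}\cdot\frac{1}{T}\int_0^T A(t)\,dt.
\]
This reduces matters to estimating $\frac{1}{S}\int_0^S A(t)\,dt - I$ for $S = T$ and $S = (1+\delta)T$, and by Fubini each of these quantities equals $\int_{K_g}\bigl(\frac{1}{S}\int_0^S \phi(u_t k)\,dt - I\bigr)\,d\nu_g(k)$.

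The next step is to invoke Lemma \ref{thm:4.4}: for any $\epsilon'>0$ and $\eta>0$ there exists $S_0$ such that for all $S>S_0$ the set $E_S\subset K_g$ on which the inner time-average differs from $I$ by more than $\epsilon'$ satisfies $\nu_g(E_S)\leq\eta$. Splitting the $K_g$-integral over $E_S$ and its complement and using the pointwise bound $|\phi|\leq\|\phi\|_\infty$ to control the contribution from $E_S$, I obtain
\[
\left|\frac{1}{S}\int_0^S A(t)\,dt - I\right|\;\leq\; \epsilon' + 2\|\phi\|_\infty\,\eta.
\]
Combining this with the decomposition above via the triangle inequality (and noting that the coefficients $\frac{1+\delta}{\delta}$ and $-\frac{1}{\delta}$ sum to $1$) gives an upper bound of $\frac{2+\delta}{\delta}(\epsilon' + 2\|\phi\|_\infty\eta)$ on the difference we want to estimate, which can be forced below $\epsilon$ by choosing $\epsilon'$ and $\eta$ sufficiently small in terms of $\epsilon$, $\delta$, and $\|\phi\|_\infty$.

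I do not expect any real obstacle: once Lemma \ref{thm:4.4} is in hand the argument is bookkeeping. The only conceptual point worth emphasising is that Lemma \ref{thm:4.4} produces only a measure-theoretic bound on an exceptional set, not a pointwise estimate, so the boundedness of $\phi$ is essential in order to discard that set at the acceptable cost of $2\|\phi\|_\infty\eta$. This is exactly why $\phi$ is assumed bounded in the statement being proved, and it is this step that will later have to be upgraded (via the $\alpha$-function bounds of Theorem \ref{thm:upper bound-1}) in order to handle the unbounded functions appearing in Theorem \ref{thm:ergodic result-1}.
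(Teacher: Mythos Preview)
Your proposal is correct and follows essentially the same approach as the paper: both arguments invoke Lemma~\ref{thm:4.4}, decompose the integral over $[T,(1+\delta)T]$ as the difference of integrals over $[0,(1+\delta)T]$ and $[0,T]$, and use boundedness of $\phi$ to absorb the exceptional set of measure~$\eta$. The only cosmetic difference is the order of operations---the paper carries out the interval decomposition pointwise in $k\in\mathcal{C}$ before integrating over $K_g$, whereas you integrate over $K_g$ first (via Fubini) and then decompose---but the resulting constants and the logic are identical.
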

\begin{proof}
Let $\phi$ be a bounded continuous function on $G_{g}/\Gamma_{g}$.
Lemma \ref{thm:4.4} implies for all $\epsilon>0$, $\eta>0$ and
$d>0$ there exists a $T_{0}>0$ such that for all $T>T_{0}$, 
\begin{equation}
\nu_{g}\left(\left\{ k\in K_{g}:\left|\frac{1}{dT}\int_{0}^{dT}\phi\left(u_{t}k\right)dt-\int_{G_{g}/\Gamma_{g}}\phi d\mu_{g}\right|>\epsilon\right\} \right)\leq\eta.\label{eq:easy 2}
\end{equation}
Using (\ref{eq:easy 2}) with $d=1$ and $d=1+\delta$ we get that
for for all $\epsilon>0$ and $\eta>0$ there exists a subset $\mathcal{C}\subseteq K_{g}$
with $\nu_{g}\left(\mathcal{C}\right)\geq1-\eta$ such that for all
$k\in\mathcal{C}$ the following holds 
\[
\left|\int_{0}^{T}\phi\left(u_{t}k\right)dt-T\int_{G_{g}/\Gamma_{g}}\phi d\mu_{g}\right|<\epsilon T\quad\textrm{and}\quad\left|\int_{0}^{\left(1+\delta\right)T}\phi\left(u_{t}k\right)dt-\left(1+\delta\right)T\int_{G_{g}/\Gamma_{g}}\phi d\mu_{g}\right|<\left(1+\delta\right)T\epsilon.
\]
Hence for all $k\in\mathcal{C}$ we have
\begin{multline*}
\left|\int_{T}^{\left(1+\delta\right)T}\phi\left(u_{t}k\right)dt-\delta T\int_{G_{g}/\Gamma_{g}}\phi d\mu_{g}\right|\\
=\left|\int_{0}^{\left(1+\delta\right)T}\phi\left(u_{t}k\right)dt-\left(1+\delta\right)T\int_{G_{g}/\Gamma_{g}}\phi d\mu_{g}-\int_{0}^{T}\phi\left(u_{t}k\right)dt+T\int_{G_{g}/\Gamma_{g}}\phi d\mu_{g}\right|\\
\leq\left|\int_{0}^{T}\phi\left(u_{t}k\right)dt-T\int_{G_{g}/\Gamma_{g}}\phi d\mu_{g}\right|+\left|\int_{0}^{\left(1+\delta\right)T}\phi\left(u_{t}k\right)dt-\left(1+\delta\right)T\int_{G_{g}/\Gamma_{g}}\phi d\mu_{g}\right|\\
\leq\left(2+\delta\right)T\epsilon.
\end{multline*}
This means that for all $\delta>0$, $\eta>0$ and $\epsilon>0$,
\[
\nu_{g}\left(\left\{ k\in K_{g}:\left|\frac{1}{\delta T}\int_{T}^{\left(1+\delta\right)T}\phi\left(u_{t}k\right)dt-\int_{G_{g}/\Gamma_{g}}\phi d\mu_{g}\right|<\frac{\left(2+\delta\right)\epsilon}{\delta}\right\} \right)\geq1-\eta.
\]
Since we can make $\epsilon$ and $\eta$ as small as we wish this
implies the claim. \end{proof}
\begin{lem}
\label{cor:erg coro}Suppose $r_{1}\geq2$ and $r_{2}\geq1$. Let
$A=\left\{ a_{t}:t\in\mathbb{R}\right\} $ be a one parameter subgroup
of $H_{g}$, not contained in any proper normal subgroup of $H_{g}$,
such that there exists a continuous homomorphism $\rho:SL_{2}\left(\mathbb{R}\right)\rightarrow H_{g}$
with $\rho\left(D\right)=A$ and $\rho\left(SO\left(2\right)\right)\subset K_{g}$
where $D=\left\{ \left(\begin{smallmatrix}t & 0\\
0 & t^{-1}
\end{smallmatrix}\right):t>0\right\} $. Let $\phi$ be a continuous function on $G_{g}/\Gamma_{g}$ vanishing
outside of a compact set. Then for all $g\in\mathcal{C}_{\textrm{SL}}\left(r_{1},r_{2}\right)$
and $\epsilon>0$ there exists $T_{0}>0$ such that for all $t>T_{0}$,
\[
\left|\int_{K_{g}}\phi\left(a_{t}k\right)d\nu_{g}\left(k\right)-\int_{G_{g}/\Gamma_{g}}\phi d\mu_{g}\right|\leq\epsilon.
\]
\end{lem}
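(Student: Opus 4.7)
The plan is to combine Lemma \ref{cor:erg coro-1} (Cesàro equidistribution of $K_g$-averages along unipotent orbits) with a thickening argument in the style of Eskin--Margulis--Mozes (Section 4 of \cite{MR1609447}) to convert the $K_g$-average along a $u_t$-orbit into a $K_g$-average at a single time of the $a_t$-orbit. First the hypothesis on $A$ transfers to $U:=\{\rho(u_s):s\in\mathbb{R}\}$ where $u_s=\bigl(\begin{smallmatrix}1&s\\0&1\end{smallmatrix}\bigr)$: since $\rho(\mathrm{SL}_2(\mathbb{R}))$ is simple, any proper normal subgroup of $H_g$ containing $U$ would intersect $\rho(\mathrm{SL}_2(\mathbb{R}))$ in a proper normal subgroup containing $U$, hence be all of $\rho(\mathrm{SL}_2(\mathbb{R}))$, forcing $A$ to lie inside, contradicting the hypothesis. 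Lemma \ref{cor:erg coro-1} is therefore available for $U$ and for every $H_g$-conjugate of $U$.

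The key tool is the $\mathrm{SL}_2(\mathbb{R})$ Iwasawa-type identity $a_tk_\theta=k_{\phi(\theta,t)}\,a_{\tau(\theta,t)}\,u_{\alpha(\theta,t)}$, valid for $\cos\theta\neq0$, in which $k_{\phi(\theta,t)}\to 1$ uniformly for $\theta$ in compact subsets of $(0,\pi)\setminus\{\pi/2\}$. Using left $K_g$-invariance of $\nu_g$ together with $K_\rho:=\rho(\mathrm{SO}(2))\subset K_g$, one writes
\[
\int_{K_g}\phi(a_tk)\,d\nu_g(k)=\frac{1}{\pi}\int_0^\pi\int_{K_g}\phi(a_tk_\theta k)\,d\nu_g(k)\,d\theta,
\]
and substitutes the decomposition inside. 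Uniform continuity of the compactly supported $\phi$ absorbs the factor $k_{\phi(\theta,t)}$; the conjugation identity $a_\tau u_\alpha=u_{e^{2\tau}\alpha}a_\tau$ then converts the integrand to $\phi(u_s\,a_\tau\,k)$ with $s=e^{2\tau}\alpha\asymp e^{2t}$. After excising a small neighbourhood of $\theta=\pi/2$ (where the decomposition degenerates and whose contribution is controlled by $\|\phi\|_\infty$ times the measure of that neighbourhood), a change of variables from $\theta$ to $s$ yields an integral over an interval $I(t)$ of length $\asymp e^{2t}$. Subdividing $I(t)$ into Cesàro windows $[T,(1+\delta)T]$ and on each window replacing the slowly-varying factor $a_\tau$ by a constant $a_{\tau_0}$ (up to small error by uniform continuity), the aim is to apply Lemma \ref{cor:erg coro-1} to a conjugate unipotent with the bounded continuous function $x\mapsto\phi(a_{\tau_0}x)$ and deduce convergence to $\int\phi\,d\mu_g$.

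The main obstacle is the scale matching in the last step. After the conjugation $u_sa_{\tau_0}=a_{\tau_0}u_{se^{-2\tau_0}}$, the Cesàro window $[T,(1+\delta)T]$ in $s$ becomes a window $[Te^{-2\tau_0},(1+\delta)Te^{-2\tau_0}]$ in the conjugated variable; since $T\asymp e^{2t}$ and $\tau_0\asymp t$, this rescaled window is of bounded length rather than tending to infinity, so a direct appeal to Lemma \ref{cor:erg coro-1} does not suffice. Resolving this requires either a careful iteration of the $\mathrm{SL}_2$-decomposition (exploiting the further fibration of $K_g$ over $K_\rho$ and bootstrapping the scale) or, more directly, an appeal to Howe--Moore mixing for the $a_t$-action on $L^2_0(G_g/\Gamma_g)$ (which follows from semisimplicity of $H_g$ and the absence of nonconstant $a_t$-invariant $L^2$-vectors), combined with the standard thickening argument to pass from mixing to the $K_g$-spherical statement. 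The contribution from the degenerate sector near $\theta=\pi/2$, as well as the Jacobian produced by the change of variables $\theta\mapsto s$, must simultaneously be kept under control uniformly in $t$; these are the same technical points addressed in Section~4 of \cite{MR1609447}.
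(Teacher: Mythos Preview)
Your approach takes a detour that creates the very obstacle you correctly identify but do not resolve: after decomposing $a_t k_\theta$ in $KAN$ form and conjugating, the unipotent window has bounded length, so Lemma~\ref{cor:erg coro-1} no longer applies. You then gesture at Howe--Moore mixing or an unspecified iteration, but neither is carried out; as written, the argument is incomplete.

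The paper avoids this obstacle entirely with a much simpler decomposition. In $SL_2(\mathbb{R})$ one has the explicit identity
\[
d_t=\begin{pmatrix}t&0\\0&t^{-1}\end{pmatrix}=b_t\,u_t\,k_t\,w,
\qquad b_t\to I,\quad k_t,w\in SO(2).
\]
Pushing through $\rho$ gives $d'_t=b'_t\,u'_t\,(k'_t w')$ with $k'_t w'\in K_g$; absorbing $k'_t w'$ into the right $K_g$-integration yields
\[
\int_{K_g}\phi(d'_t k)\,d\nu_g(k)=\int_{K_g}\phi(b'_t u'_t k)\,d\nu_g(k),
\]
which by uniform continuity and $b'_t\to I$ is close to $\int_{K_g}\phi(u'_t k)\,d\nu_g(k)$. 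The diagonal parameter and the unipotent parameter are now \emph{the same} $t$, so there is no rescaling problem: one averages over a short multiplicative window $r\in[1,1+\delta]$ (replacing $t$ by $rt$, controlled again by uniform continuity via $d'_{rt}=d'_r d'_t$) and applies Lemma~\ref{cor:erg coro-1} directly. The extra averaging over $\rho(SO(2))$ that you introduce at the outset is unnecessary and is exactly what produces the scale mismatch.
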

\begin{proof}
This is very similar to the proof of Theorem 4.4 from \cite{MR1609447}
and some details will be omitted. Fix $\epsilon>0$. Assume that $\phi$
is uniformly continuous. Let $u_{t}=\left(\begin{smallmatrix}1 & t\\
0 & 1
\end{smallmatrix}\right)$ and $w=\left(\begin{smallmatrix}0 & -1\\
1 & 0
\end{smallmatrix}\right)$. Then it is clear that $d_{t}=\left(\begin{smallmatrix}t & 0\\
0 & t^{-1}
\end{smallmatrix}\right)=b_{t}u_{t}k_{t}w$, where $b_{t}=\left(1+t^{-2}\right)^{-1/2}\left(\begin{smallmatrix}1 & 0\\
-t^{-1} & 1+t^{-2}
\end{smallmatrix}\right)$ and $k_{t}=\left(1+t^{-2}\right)^{-1/2}\left(\begin{smallmatrix}1 & t^{-1}\\
-t^{-1} & 1
\end{smallmatrix}\right).$ By our assumptions on $A$ there exists a continuous homomorphism
$\rho:SL_{2}\left(\mathbb{R}\right)\rightarrow H_{g}$ such that $\rho\left(D\right)=A$
and $\rho\left(SO\left(2\right)\right)\subset K_{g}$. Let $\rho\left(d_{t}\right)=d'_{t}$,
$\rho\left(b_{t}\right)=b'_{t}$, $\rho\left(k_{t}\right)=k'_{t}$
and $\rho\left(w\right)=w'$. Then for all $t>0$ and $g\in\mathcal{C}_{\textrm{SL}}\left(r_{1},r_{2}\right)$,
\begin{alignat}{1}
\int_{K_{g}}\phi\left(d'_{t}k\right)d\nu_{g}\left(k\right) & =\int_{K_{g}}\phi\left(b'_{t}u'_{t}k'_{t}w'k\right)d\nu_{g}\left(k\right)\nonumber \\
 & =\int_{K_{g}}\phi\left(b'_{t}u'_{t}k\right)d\nu_{g}\left(k\right),\label{eq:proof of translation to self adjoint 1}
\end{alignat}
since $k'_{t}$ and $w'\in K_{g}$. It follows from (\ref{eq:proof of translation to self adjoint 1})
that for $r,t>0$,
\begin{multline}
\left|\int_{K_{g}}\phi\left(d'_{t}k\right)d\nu_{g}\left(k\right)-\int_{K_{g}}\phi\left(u'_{rt}k\right)d\nu_{g}\left(k\right)\right|\\
\leq\left|\int_{K_{g}}\left(\phi\left(d'_{rt}k\right)-\phi\left(d'_{t}k\right)\right)d\nu_{g}\left(k\right)\right|+\left|\int_{K_{g}}\left(\phi\left(d'_{rt}k\right)-\phi\left(u'_{rt}k\right)\right)d\nu_{g}\left(k\right)\right|\\
=\left|\int_{K_{g}}\left(\phi\left(d'_{r}d'_{t}k\right)-\phi\left(d'_{t}k\right)\right)d\nu_{g}\left(k\right)\right|+\left|\int_{K_{g}}\left(\phi\left(b'_{rt}u'_{rt}k\right)-\phi\left(u'_{rt}k\right)\right)d\nu_{g}\left(k\right)\right|.\label{eq:trans 2}
\end{multline}
By uniform continuity, the fact that $\lim_{t\rightarrow\infty}b_{t}=I$
and (\ref{eq:trans 2}) imply there exists $T_{1}>0$ and $\delta>0$
such that for $t>T_{1}$ and $\left|r-1\right|<\delta$ we have 
\[
\left|\int_{K_{g}}\phi\left(d'_{t}k\right)d\nu_{g}\left(k\right)-\int_{K_{g}}\phi\left(u'_{rt}k\right)d\nu_{g}\left(k\right)\right|\leq\epsilon.
\]
Thus, if $T>T_{1}$ then 
\begin{equation}
\left|\int_{K_{g}}\phi\left(d'_{t}k\right)d\nu_{g}\left(k\right)-\frac{1}{\delta T}\int_{T}^{\left(1+\delta\right)T}\int_{K_{g}}\phi\left(u'_{t}k\right)d\nu_{g}\left(k\right)dt\right|\leq\epsilon.\label{eq:translation 4}
\end{equation}
Combining (\ref{eq:translation 4}) with Lemma \ref{cor:erg coro}
via the triangle inequality finishes the proof of the Lemma. 
\end{proof}
The section is completed by the proof of the main ergodic result whose
proof follows that of Theorem 3.5 in \cite{MR1609447}. 
\begin{proof}[Proof of Theorem \ref{thm:ergodic result-1}]
Assume that $\phi$ is non-negative. Let $A\left(r\right)=\left\{ x\in G_{g}/\Gamma_{g}:\alpha\left(x\right)>r\right\} $.
Choose a continuous non-negative function $g_{r}$ on $G_{g}/\Gamma_{g}$
such that $g_{r}\left(x\right)=1$ if $x\in A\left(r+1\right)$, $g_{r}\left(x\right)=0$
if $x\notin A\left(r\right)$ and $0\leq g_{r}\left(x\right)\leq1$
if $x\in A\left(r\right)\setminus A\left(r+1\right)$. Then 
\begin{equation}
\int_{K_{g}}\phi\left(a_{t}k\right)d\nu_{g}\left(k\right)=\int_{K_{g}}\phi\left(a_{t}k\right)g_{r}\left(a_{t}k\right)d\nu_{g}\left(k\right)+\int_{K_{g}}\left(\phi\left(a_{t}k\right)-\phi\left(a_{t}k\right)g_{r}\left(a_{t}k\right)\right)d\nu_{g}\left(k\right).\label{eq:end of sec 1 0.5}
\end{equation}
Let $\beta=2-\delta$ then for $x\in G_{g}/\Gamma_{g}$, 
\begin{alignat*}{1}
\phi\left(x\right)g_{r}\left(x\right) & \leq C\alpha\left(x\right)^{2-\beta}g_{r}\left(x\right)\\
 & =C\alpha\left(x\right)^{2-\beta/2}g_{r}\left(x\right)\alpha\left(x\right)^{-\beta/2}\leq Cr^{-\beta/2}\alpha\left(x\right)^{2-\beta/2}.
\end{alignat*}
The last inequality is true because $g_{r}\left(x\right)=0$ if $\alpha\left(x\right)\leq r$.
Therefore 
\begin{equation}
\int_{K_{g}}\phi\left(a_{t}k\right)g_{r}\left(a_{t}k\right)d\nu_{g}\left(k\right)\leq Cr^{-\beta/2}\int_{K_{g}}\alpha\left(a_{t}k\right)^{2-\beta/2}d\nu_{g}\left(k\right).\label{eq:end of sec 1 1}
\end{equation}
Since $g\in\mathcal{C}_{\textrm{SL}}\left(r_{1},r_{2}\right)$, $r_{1}\geq3$
and $r_{2}\geq1$ Theorem \ref{thm:upper bound-1} part I implies
there exists $B$ such that 
\begin{alignat*}{2}
\int_{K_{g}}\alpha\left(a_{t}k\right)^{2-\beta/2}d\nu_{g}\left(k\right) & =\int_{K_{I}}\alpha\circ g^{-1}\left(a_{t}kg\right)^{2-\beta/2}d\nu_{I}\left(k\right) & \leq c\left(g\right)\int_{K_{I}}\alpha\left(a_{t}kg\right)^{2-\beta/2}d\nu_{I}\left(k\right)<B
\end{alignat*}
for all $t\geq0$. Then (\ref{eq:end of sec 1 1}) implies that 
\begin{equation}
\int_{K_{g}}\phi\left(a_{t}k\right)g_{r}\left(a_{t}k\right)d\nu_{g}\left(k\right)\leq BCr^{-\beta/2}.\label{eq:end of sec 1 2}
\end{equation}
For all $\epsilon>0$ there exists a compact subset, $\mathcal{C}$
of $G_{g}/\Gamma_{g}$ such that $\mu_{g}\left(\mathcal{C}\right)\geq1-\epsilon$.
The function $\alpha$ is bounded on $\mathcal{C}$ and hence for
all $\epsilon>0$, 
\[
\lim_{r\rightarrow\infty}\mu_{g}\left(A\left(r\right)\right)=\lim_{r\rightarrow\infty}\left(\mu_{g}\left(\left\{ x\in\mathcal{C}:\alpha\left(x\right)>r\right\} \right)+\mu_{g}\left(\left\{ x\in\left(G_{g}/\Gamma_{g}\right)\setminus\mathcal{C}:\alpha\left(x\right)>r\right\} \right)\right)\leq\epsilon.
\]
This means that 
\begin{equation}
\lim_{r\rightarrow\infty}\mu_{g}\left(A\left(r\right)\right)=0.\label{eq:limit of sets tends to zero}
\end{equation}
Note that 
\begin{equation}
\int_{G_{g}/\Gamma_{g}}\phi\left(x\right)g_{r}\left(x\right)d\mu_{g}\left(x\right)\leq\int_{A\left(r\right)}\phi\left(x\right)d\mu_{g}\left(x\right).\label{eq:bound on funny integral}
\end{equation}
Since $\phi\in L^{1}\left(G_{g}/\Gamma_{g}\right)$, \prettyref{eq:limit of sets tends to zero}
and \prettyref{eq:bound on funny integral} imply that 
\begin{equation}
\lim_{r\rightarrow\infty}\int_{G_{g}/\Gamma_{g}}\phi\left(x\right)g_{r}\left(x\right)d\mu_{g}\left(x\right)=0.\label{eq:limit of integral}
\end{equation}
Since the function $\phi\left(x\right)-\phi\left(x\right)g_{r}\left(x\right)$
is continuous and has compact support, Lemma \ref{cor:erg coro} implies
for all $\epsilon>0$ and $g\in\mathcal{C}_{\textrm{SL}}\left(r_{1},r_{2}\right)$
there exists $T_{0}>0$ such that for all $t>T_{0}$, 
\begin{equation}
\left|\int_{K_{g}}\left(\phi\left(a_{t}k\right)-\phi\left(a_{t}k\right)g_{r}\left(a_{t}k\right)\right)d\nu_{g}\left(k\right)-\int_{G_{g}/\Gamma_{g}}\left(\phi\left(x\right)-\phi\left(x\right)g_{r}\left(x\right)\right)d\mu_{g}\left(x\right)\right|<\frac{\epsilon}{2}.\label{eq:end of sec 1 3}
\end{equation}
It is straight forward to check that (\ref{eq:end of sec 1 0.5}),
(\ref{eq:end of sec 1 2}), \prettyref{eq:limit of integral} and
(\ref{eq:end of sec 1 3}) imply the conclusion of the Theorem if
$r$ is sufficiently large. 
\end{proof}

\section{Proof of Theorem \ref{main theorem-1}.\label{sec:Proof-of-Theorem}}

The proof of Theorem \ref{main theorem-1} follows the same route
as that of Sections 3.4-3.5 of \cite{MR1609447}. The main modification
we make in order to handle the present situation is that we work inside
the surface $X_{g}\left(\mathbb{R}\right)$ rather than in the whole
of $\mathbb{R}^{d}$. For $t\in\mathbb{R}$ and $v\in\mathbb{R}^{d}$
define a linear map $a_{t}$ by 
\[
a_{t}v=\left(v_{1},\ldots,v_{s},e^{-t}v_{s+1},v_{s+2},\ldots,e^{t}v_{d}\right).
\]
Note that the one parameter group $\left\{ \hat{a}_{t}:t\mathbb{\in R}\right\} =g^{-1}\left\{ a_{t}:t\in\mathbb{R}\right\} g\subset H_{g}$
and that there exists a continuous homomorphism $\rho:SL_{2}\left(\mathbb{R}\right)\rightarrow H_{g}$
with $\rho\left(D\right)=\left\{ \hat{a}_{t}:t\mathbb{\in R}\right\} $
and $\rho\left(SO\left(2\right)\right)\subset K_{g}$ where $D=\left\{ \left(\begin{smallmatrix}t & 0\\
0 & t^{-1}
\end{smallmatrix}\right):t>0\right\} $. Moreover note that $\left\{ a_{t}:t\in\mathbb{R}\right\} $ is self
adjoint, not contained in any normal subgroup of $H_{g}$ and the
only eigenvalues of $a_{t}$ are $e^{-t},1$ and $e^{t}$. In other
words, $\left\{ \hat{a}_{t}:t\mathbb{\in R}\right\} $ satisfies the
conditions of Theorem \ref{thm:ergodic result-1} and Theorem \ref{thm:upper bound-1}.
For any natural number $n$, let $S^{n-1}$ denote the unit sphere
in a $n$ dimensional Euclidean space and let $\gamma_{n}=\textrm{Vol}\left(S^{n}\right)$
and $c_{r_{1},r_{2}}=\gamma_{r_{1}-1}\gamma_{r_{2}-1}$ then define
\begin{equation}
C_{1}=c_{r_{1},r_{2}}2^{\left(2-r_{1}-r_{2}\right)/2}=c_{r_{1},r_{2}}2^{\left(2-d+s\right)/2}.\label{eq:def of C_1}
\end{equation}

\subsection{Proof of Theorem \ref{prop:crude upper bound}.}

In Lemma \ref{lemma about evctors} it is shown that it is possible
to approximate certain integrals over $K_{g}$ by integrals over $\mathbb{R}^{d-s-2}$.
The integral over $\mathbb{R}^{d-s-2}$ can be used like the characteristic
function of $R\times A\left(T/2,T\right)$, in particular Theorem
\ref{prop:crude upper bound} is proved as an application of Lemma
\ref{lemma about evctors}. It should be noted that Lemma \ref{lemma about evctors}
is analogous to Lemma 3.6\texttt{ }from \cite{MR1609447} and its
proof is similar\texttt{. }
\begin{lem}
\label{lemma about evctors}Let $f$ be a continuous function of compact
support on $\mathbb{R}_{+}^{d}=\left\{ v\in\mathbb{R}^{d}:\left\langle v,e_{s+1}\right\rangle >0\right\} $
and for $g\in\mathcal{C}_{\textrm{SL}}\left(r_{1},r_{2}\right)$ let
\[
J_{f,g}\left(\ell_{1},\ldots,\ell_{s},r\right)=\frac{1}{r^{d-s-2}}\int_{\mathbb{R}^{d-s-2}}f\left(\ell_{1},\ldots,\ell_{s},r,v_{s+2},\ldots,v_{d-1},v_{d}\right)dv_{s+2}\ldots dv_{d-1},
\]
where $v_{d}=\left(a-Q_{0}^{g}\left(\ell_{1},\ldots,\ell_{s},0,v_{s+2},\ldots,v_{d-1},0\right)\right)/2r,$
so that $Q_{0}^{g}\left(\ell_{1},\ldots,\ell_{s},r,v_{s+2},\ldots,v_{d-1},v_{d}\right)=a$.
Then for every $\epsilon>0$ there exists $T_{0}>0$ such that for
every $t$ with $e^{t}>T_{0}$ and every $v\in\mathbb{R}_{+}^{d}$
with $\left\Vert v\right\Vert >T_{0}$,
\[
\left|C_{1}e^{\left(d-s-2\right)t}\int_{K_{g}}f\left(\hat{a}_{t}kv\right)d\nu_{g}\left(k\right)-J_{f,g}\left(M_{0}^{g}\left(v\right),\left\Vert v\right\Vert e^{-t}\right)\right|<\epsilon.
\]
\end{lem}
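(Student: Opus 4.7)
The plan is to compute the spherical integral $\int_{K_g}f(\hat{a}_tkv)\,d\nu_g(k)$ explicitly in coordinates, change variables to recognize it as $J_{f,g}/(C_1e^{(d-s-2)t})$, and control the error using compact support of $f$.

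First I would conjugate by $g$: since $\hat{a}_t=g^{-1}a_tg$ and $K_g=g^{-1}K_Ig$, the integral becomes a spherical integral over $K_I$ of the pullback of $f$ evaluated at $a_tk'(gv)$, so one may work in the canonical setup $(Q_0,M_0,H_I,K_I)$. Next I would parameterize $K_I\cong S(O(r_1)\times O(r_2))$ via its action on the positive- and negative-definite subspaces of $Q_0|_{\langle e_{s+1},\ldots,e_d\rangle}$: replacing $(e_{s+1},e_d)$ by $u_1=(e_{s+1}+e_d)/\sqrt{2}$ and $u_2=(e_{s+1}-e_d)/\sqrt{2}$ diagonalizes $Q_0$, and writing the last $d-s$ coordinates of $v$ as $(p,n)\in\mathbb{R}^{r_1}\times\mathbb{R}^{r_2}$, integration over $K_I$ factors through the uniform probability measure on $S^{r_1-1}_{\|p\|}\times S^{r_2-1}_{\|n\|}$.

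A direct computation of $a_t$ acting on $(u_1,u_2)$ yields that the $e_{s+1}$- and $e_d$-coordinates of $a_tkv$ equal $e^{-t}(A+B)/\sqrt{2}$ and $e^{t}(A-B)/\sqrt{2}$ respectively, while the middle coordinates $e_{s+2},\ldots,e_{d-1}$ equal the $(r_1-1)+(r_2-1)=d-s-2$ transverse components $y_+',y_-'$, where $A=\pm\sqrt{\|p\|^2-\|y_+'\|^2}$ and $B=\pm\sqrt{\|n\|^2-\|y_-'\|^2}$. Projecting each sphere onto its transverse components produces the Jacobian $[\gamma_{r_1-1}\gamma_{r_2-1}\|p\|^{r_1-2}\|n\|^{r_2-2}|A||B|]^{-1}$, summed over the four sign choices.

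The support hypothesis $\mathrm{supp}(f)\subset\mathbb{R}_+^d$ kills three of the four sign sectors and forces $A+B\sim\sqrt{2}re^t$, $|A-B|=O(e^{-t})$, and $(y_+',y_-')$ bounded, which in turn pin $\|p\|,\|n\|\approx re^t/\sqrt{2}$ with $r=\|v\|e^{-t}$. Taylor expanding $A=\|p\|\sqrt{1-\|y_+'\|^2/\|p\|^2}$ (and similarly $B$) and applying the identity $A^2-B^2=(\|p\|^2-\|n\|^2)+\|y_-'\|^2-\|y_+'\|^2$ together with the quadric relation $Q_0^g(v)=a$ gives $e^{-t}(A+B)/\sqrt{2}\to r$ and $e^t(A-B)/\sqrt{2}\to v_d$ uniformly on $\mathrm{supp}(f)$, with $v_d$ as defined in $J_{f,g}$. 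Since $\|p\|^{r_1-2}\|n\|^{r_2-2}|A||B|\sim(re^t/\sqrt{2})^{d-s-2}$ at leading order, substituting recovers the constant $(\gamma_{r_1-1}\gamma_{r_2-1})^{-1}2^{(d-s-2)/2}=C_1^{-1}$ in view of \eqref{eq:def of C_1}, and uniform continuity of $f$ on its compact support upgrades the Taylor errors to a uniform $o(1)$.

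The principal technical obstacle is uniformity of the approximation in $v$: the region of $(y_+',y_-')$ contributing to the integral must be shown bounded by $\mathrm{supp}(f)$ alone (independently of $v$ and $t$), and the collapse of the four sign sectors to $(+,+)$ must be uniform. Both are ensured precisely when $e^t>T_0$ and $\|v\|>T_0$ are jointly large, which is the content of the hypothesis.
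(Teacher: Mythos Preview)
Your proposal is correct and follows essentially the same route as the paper: pass to a basis in which $Q_0^g$ is diagonal and $K_g$ acts as $SO(r_1)\times SO(r_2)$, realise the $K_g$-orbit as a product of spheres of radii $\sim \|v\|/\sqrt{2}$, use the compact support of $f$ to localise near a single pole, and replace the spherical measure there by flat Lebesgue measure with the Jacobian that produces the constant $C_1$. The paper organises this by working directly in an adapted basis $b_1,\dots,b_d$ for $Q_0^g$ (rather than conjugating by $g$) and first isolating the coordinate approximations \eqref{eq:5}--\eqref{eq:7} before passing to the orthogonal frame, whereas you compute the Jacobian and the four sign sectors more explicitly; the content is the same. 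One small point to watch in your write-up: after conjugating by $g$ the Euclidean norm $\|v\|$ becomes the pulled-back norm $\|g^{-1}(\cdot)\|$ on the $K_I$ side, so you must track this when identifying the radii $\|p\|,\|n\|$ with $\|v\|e^{-t}/\sqrt{2}$ rather than with $\|gv\|e^{-t}/\sqrt{2}$---the paper sidesteps this by never leaving the original $\mathbb{R}^d$.
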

\begin{proof}
By Lemma 2.2 of \cite{2011arXiv1111.4428S}, for all $g\in\mathcal{C}_{\textrm{SL}}\left(r_{1},r_{2}\right)$
there exists a basis of $\mathbb{R}^{d}$, denoted by $b_{1},\ldots,b_{d}$
such that 
\[
Q_{0}^{g}\left(v\right)=Q_{1,\ldots,s}\left(v\right)+2v_{s+1}v_{d}+\sum_{i=s+2}^{s+r_{1}}v_{i}^{2}-\sum_{i=s+r_{1}+1}^{d-1}v_{i}^{2}\quad\textrm{,}\quad M_{0}^{g}\left(v\right)=\left(v_{1},\ldots,v_{s}\right),
\]
and
\[
\hat{a}_{t}\left(v\right)=\left(v_{1},\ldots v_{s},e^{-t}v_{s+1},v_{s+2},\ldots v_{d-1},e^{t}v_{d}\right),
\]
where $v_{i}=\left\langle v,b_{i}\right\rangle $ for $1\leq i\leq d$
and $Q_{1,\ldots,s}\left(v\right)$ is a non degenerate quadratic
form in variables $v_{1},\ldots,v_{s}$. Let $E$ denote the support
of $f$. Let $c_{1}=\inf_{v\in E}\left\langle v,b_{s+1}\right\rangle $,
$c_{2}=\sup_{v\in E}\left\langle v,b_{s+1}\right\rangle $. From the
definition of $\hat{a}_{t}$ it follows that $f\left(\hat{a}_{t}w\right)=0$
unless
\begin{gather}
\left|\left\langle w,b_{s+1}\right\rangle \left\langle w,b_{d}\right\rangle \right|\leq\beta,\label{eq:1}\\
c_{1}\leq\left\langle w,b_{s+1}\right\rangle e^{-t}\leq c_{2},\label{eq:2}\\
\pi'\left(w\right)\in\pi'\left(E\right),\label{eq:3}
\end{gather}
where $\beta$ depends only on $E$ and $\pi'$ denotes the projection
onto the span of $b_{1},\ldots,b_{s},b_{s+2},\ldots,b_{d-1}$. For
$w$ satisfying (\ref{eq:1}) and (\ref{eq:2}) we have $\left\langle w,b_{d}\right\rangle =O\left(e^{-t}\right)$.
This, together with (\ref{eq:3}) and (\ref{eq:2}) imply that if
$f\left(\hat{a}_{t}w\right)\neq0$ and $t$ is large, then 
\begin{equation}
\left\Vert w\right\Vert =\left\langle w,b_{s+1}\right\rangle +O\left(e^{-t}\right).\label{eq:4}
\end{equation}
Note that by (\ref{eq:4}), 
\begin{equation}
\left\langle \hat{a}_{t}w,b_{s+1}\right\rangle =\left\langle w,b_{s+1}\right\rangle e^{-t}=e^{-t}\left\Vert w\right\Vert +O\left(e^{-2t}\right),\label{eq:5}
\end{equation}
and 
\begin{equation}
\left\langle \hat{a}_{t}w,b_{i}\right\rangle =\left\langle w,b_{i}\right\rangle ,\quad\quad\textrm{for }1\leq i\leq s,\textrm{ or }s+2\leq i\leq d-1.\label{eq:6}
\end{equation}
Finally, 
\begin{alignat}{1}
\left\langle \hat{a}_{t}w,b_{d}\right\rangle  & =\left(Q_{0}^{g}\left(w\right)-Q_{0}^{g}\left(\left\langle w,b_{1}\right\rangle ,\ldots,\left\langle w,b_{s}\right\rangle ,0,\left\langle w,b_{s+1}\right\rangle ,\ldots,\left\langle w,b_{d-1}\right\rangle ,0\right)\right)/2\left\langle \hat{a}_{t}w,b_{s+1}\right\rangle \label{eq:7}\\
 & =\left(Q_{0}^{g}\left(w\right)-Q_{0}^{g}\left(\left\langle w,b_{1}\right\rangle ,\ldots,\left\langle w,b_{s}\right\rangle ,0,\left\langle w,b_{s+1}\right\rangle ,\ldots,\left\langle w,b_{d-1}\right\rangle ,0\right)\right)/2e^{-t}\left\Vert w\right\Vert +O\left(e^{-t}\right).\nonumber 
\end{alignat}
Hence, using (\ref{eq:5}), (\ref{eq:6}) and (\ref{eq:7}) together
with the uniform continuity of $f$, applied with $w=kv$ for $v\in\mathbb{R}_{+}^{d}$
and $k\in K_{g}$, we see that for all $\delta>0$ there exists a
$t_{0}>0$ so that if $t>t_{0}$ then 
\begin{equation}
\left|f\left(\hat{a}_{t}kv\right)-f\left(v_{1},\ldots,v_{s},\left\Vert v\right\Vert e^{-t},\left\langle kv,b_{s+1}\right\rangle ,\ldots,\left\langle kv,b_{d-1}\right\rangle ,v_{d}\right)\right|<\delta,\label{eq:8}
\end{equation}
where $v_{d}$ is determined by 
\[
Q_{0}^{g}\left(v_{1},\ldots,v_{s},\left\Vert v\right\Vert e^{-t},\left\langle kv,b_{s+1}\right\rangle ,\ldots,\left\langle kv,b_{d-1}\right\rangle ,v_{d}\right)=Q_{0}^{g}\left(v\right)=a.
\]
Change basis by letting $f_{s+1}=\left(b_{s+1}+b_{d}\right)/\sqrt{2}$,
$f_{d}=\left(b_{s+1}-b_{d}\right)/\sqrt{2}$ and $f_{i}=b_{i}$ for
\foreignlanguage{english}{$1\leq i\leq s,\textrm{ or }s+2\leq i\leq d-1.$}
In this basis $K_{g}\cong SO\left(r_{1}\right)\times SO\left(r_{2}\right)$
consists of orthogonal matrices preserving the subspaces $L_{1}=\left\langle f_{1},\ldots,f_{s}\right\rangle ,$
$L_{2}=\left\langle f_{s+1},\ldots,f_{s+r_{1}}\right\rangle $ and
$L_{3}=\left\langle f_{s+r_{1}+1},\ldots,f_{d}\right\rangle $. For
$i=1,2$ or $3$, let $\pi_{i}$ denote the orthogonal projection
onto $L_{i}$. Write $\rho_{i}=\left\Vert \pi_{i}\left(v\right)\right\Vert $;
then the orbit $K_{g}v$ is product of a point and two spheres $\left\{ v_{1},\ldots,v_{s}\right\} \times\rho_{2}S^{r_{1}-1}\times\rho_{3}S^{r_{2}-1}$,
where $S^{r_{1}-1}$ denotes the unit sphere in $L_{2}$ and $S^{r_{2}-1}$
the unit sphere in $L_{3}$. 

Suppose $w\in K_{g}v$ is such that $f\left(\hat{a}_{t}w\right)\neq0$.
Then from (\ref{eq:1}) and (\ref{eq:2}) it follows that $\left\langle w,b_{d}\right\rangle =O\left(e^{-t}\right)$.
Now, set $w_{i}=\left\langle w,f_{i}\right\rangle $, then $w_{s+1}=2^{-1/2}\left\langle w,b_{s+1}\right\rangle +O\left(e^{-t}\right)$,
$w_{d}=2^{-1/2}\left\langle w,b_{s+1}\right\rangle +O\left(e^{-t}\right)$
and for \foreignlanguage{english}{$1\leq i\leq s,\textrm{ or }s+2\leq i\leq d-1$,}
\foreignlanguage{english}{$w_{i}=O\left(1\right)$}. Hence for $i=2$
or $3$,
\begin{equation}
\rho_{i}=\left\Vert \pi_{i}\left(w\right)\right\Vert =2^{-1/2}\left\langle w,b_{s+1}\right\rangle +O\left(e^{-t}\right)=2^{-1/2}\left\Vert w\right\Vert +O\left(e^{-t}\right),\label{eq:0.9}
\end{equation}
where the last estimate follows from (\ref{eq:4}). 

By integrating (\ref{eq:8}) with respect to $K_{g}$ we see that
for all $\epsilon>0$ there exists a $t_{0}>0$ so that if $t>t_{0}$
then
\begin{equation}
\left|\int_{K_{g}}f\left(\hat{a}_{t}kv\right)d\nu_{g}\left(k\right)-\int_{K_{g}}f\left(v_{1},\ldots,v_{s},\left\Vert v\right\Vert e^{-t},\left\langle kv,b_{s+1}\right\rangle ,\ldots,\left\langle kv,b_{d-1}\right\rangle ,v_{d}\right)d\nu_{g}\left(k\right)\right|<\epsilon.\label{eq:10}
\end{equation}
\textcolor{black}{Equation (\ref{eq:3}) implies that if $f\left(\hat{a}_{t}kv\right)\neq0$,
then $kv$ is within a bounded distance from $\rho_{2}f_{s+1}+\rho_{3}f_{d}$.
As $\left\Vert v\right\Vert $ increases }so do the $\rho_{i}$ and
the normalised Haar measure on $\rho_{2}S^{r_{1}-1}$ near $\rho_{2}f_{s+1}$
tends to $\left(1/\textrm{Vol}\left(\rho_{2}S^{r_{1}-1}\right)\right)dv_{s+2}\ldots dv_{s+r_{1}}$
and similarly the Haar measure on $\rho_{3}S^{r_{2}-1}$ near $\rho_{3}f_{d}$
tends to $\left(1/\textrm{Vol}\left(\rho_{3}S^{r_{2}-1}\right)\right)dv_{s+r_{1}+1}\ldots dv_{d-1}$.
This means that as $\left\Vert v\right\Vert $ tends to infinity the
second integral in (\ref{eq:10}) tends to \foreignlanguage{english}{
\begin{multline}
\frac{\rho_{2}^{1-r_{1}}\rho_{3}^{1-r_{2}}}{c_{r_{1},r_{2}}}\int_{\mathbb{R}^{d-s-2}}f\left(v_{1},\ldots,v_{s},\left\Vert v\right\Vert e^{-t},v_{s+1},\ldots,v_{d}\right)dv_{s+2}\ldots dv_{d-1}\\
=\frac{\left(\left\Vert v\right\Vert e^{-t}\right)^{d-s-2}}{\rho_{2}^{r_{1}-1}\rho_{3}^{r_{2}-1}c_{r_{1},r_{2}}}J_{f,g}\left(M_{0}^{g}\left(v\right),\left\Vert v\right\Vert e^{-t}\right).\label{eq:14}
\end{multline}
}Because (\ref{eq:0.9}) implies that $\rho_{2}^{r_{1}-1}\rho_{3}^{r_{2}-1}=2^{\left(s+2-d\right)/2}\left\Vert v\right\Vert ^{d-s-2}+O(e^{-t})$
we can use (\ref{eq:10}) and (\ref{eq:14}) to get that for all $\epsilon>0$
there exists a $t_{0}>0$ so that if $t>t_{0}$ and $\left\Vert v\right\Vert >t_{0}$
then\foreignlanguage{english}{ }
\[
\left|\int_{K_{g}}f\left(\hat{a}_{t}kv\right)d\nu_{g}\left(k\right)-\frac{e^{t\left(s+2-d\right)}}{C_{1}}J_{f,g}\left(M_{0}^{g}\left(v\right),\left\Vert v\right\Vert e^{-t}\right)\right|<\epsilon.
\]
By dividing through by the factor $\frac{e^{t\left(s+2-d\right)}}{C_{1}}$
we obtain the desired conclusion. 
\end{proof}
For $f_{1}$ and $f_{2}$ continuous functions of compact support
on $\mathbb{R}_{+}^{d}=\left\{ v\in\mathbb{R}^{d}:\left\langle v,e_{s+1}\right\rangle >0\right\} $,
define $J_{f_{1},g}+J_{f_{2},g}=J_{f_{1}+f_{2},g}$ and $J_{f_{1},g}J_{f_{2},g}=J_{f_{1}f_{2},g}$.
These operations make the collection of functions of the form $J_{f,g}$
into an algebra of real valued functions on the set $\mathbb{R}^{s}\times\left\{ v\in\mathbb{R}:v>0\right\} $.
Denote by this algebra by $\mathcal{A}$. The following Lemma will
be used in the proofs of Theorem \ref{prop:crude upper bound} and
Theorem \ref{main theorem-1}. 
\begin{lem}
\label{lem:stone weirstarss}$\mathcal{A}$ is dense in $C_{c}\left(\mathbb{R}^{s}\times\left\{ v\in\mathbb{R}:v>0\right\} \right)$. \end{lem}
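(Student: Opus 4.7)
The plan is to prove the stronger fact that $\mathcal{A}$ actually \emph{equals} $C_c(\mathbb{R}^s\times\{r>0\})$, which makes density automatic. The key observation is that the map $f\mapsto J_{f,g}$ surjects onto $C_c(\mathbb{R}^s\times(0,\infty))$: for any target $\psi$, we manufacture an $f\in C_c(\mathbb{R}_+^d)$ by multiplying $\psi$ (pulled back via the projection to the first $s+1$ coordinates) by an auxiliary bump function in the transversal variables $(v_{s+2},\dots,v_{d-1})$, a cutoff in the $v_d$-direction, and a factor of $v_{s+1}^{d-s-2}$ designed to cancel the normalisation $r^{-(d-s-2)}$ appearing in the definition of $J_{f,g}$.

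Precisely, given $\psi\in C_c(\mathbb{R}^s\times(0,\infty))$, let $K=\mathrm{supp}(\psi)$ and let $r_{\min}>0$ be the infimum of the $r$-coordinate over $K$ (positive by compactness of $K$ inside the open half-space). Pick $\phi_0\in C_c(\mathbb{R}^{d-s-2})$ with $\int\phi_0=1$ supported in a ball $B$. Because $(\ell,r,v)\mapsto v_d(\ell,r,v)=(a-Q_0^g(\ell,0,v,0))/(2r)$ is continuous on the compact set $K\times B$ where $r\geq r_{\min}>0$, it is bounded there by some $M>0$; choose $\chi\in C_c(\mathbb{R})$ with $\chi\equiv 1$ on $[-M,M]$. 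Define
\[
f(w_1,\ldots,w_d)=\psi(w_1,\ldots,w_{s+1})\,w_{s+1}^{d-s-2}\,\phi_0(w_{s+2},\ldots,w_{d-1})\,\chi(w_d).
\]
Then $f\in C_c(\mathbb{R}_+^d)$: continuity is clear, $\mathrm{supp}(f)$ lies in the compact subset of $\mathbb{R}_+^d$ where the first $s+1$ coordinates are in $K$ (so $w_{s+1}\geq r_{\min}$), $(w_{s+2},\ldots,w_{d-1})\in B$ and $w_d\in\mathrm{supp}(\chi)$.

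Substituting into the definition of $J_{f,g}$ the factor $w_{s+1}^{d-s-2}$ evaluates to $r^{d-s-2}$ and cancels the prefactor $r^{-(d-s-2)}$, yielding
\[
J_{f,g}(\ell,r)=\psi(\ell,r)\int_{\mathbb{R}^{d-s-2}}\phi_0(v)\,\chi\bigl(v_d(\ell,r,v)\bigr)\,dv.
\]
For $(\ell,r)\in K$, the integrand is supported in $B$, where $|v_d|\leq M$ so $\chi\circ v_d\equiv 1$, and the integral equals $\int\phi_0=1$; for $(\ell,r)\notin K$, the prefactor $\psi(\ell,r)$ vanishes. Thus $J_{f,g}=\psi$ identically. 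Combined with the trivial inclusion $\mathcal{A}\subseteq C_c(\mathbb{R}^s\times\{r>0\})$ (continuity and compact support of $J_{f,g}$ are inherited from $f$), this gives $\mathcal{A}=C_c(\mathbb{R}^s\times\{r>0\})$, stronger than density.

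There is no real obstacle here beyond bookkeeping; everything reduces to the fact that $v_d(\ell,r,v)$ is bounded on compact subsets of $\{r>0\}$. It is worth remarking that the authors' emphasis on an ``algebra structure'' might suggest a Stone--Weierstrass argument, but their defined product $J_{f_1,g}J_{f_2,g}:=J_{f_1f_2,g}$ is \emph{not} the pointwise product of functions on $\mathbb{R}^s\times\{r>0\}$ (integrating a product is not the product of integrals), so the standard Stone--Weierstrass theorem does not directly apply and the surjectivity argument above sidesteps this issue.
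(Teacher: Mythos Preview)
Your argument is correct and in fact proves more than the lemma asks: the map $f\mapsto J_{f,g}$ is surjective onto $C_c(\mathbb{R}^s\times(0,\infty))$, so $\mathcal{A}$ coincides with the whole space. The construction is clean and the only points requiring care---that $v_d(\ell,r,v)$ is bounded on $K\times B$ because $r\ge r_{\min}>0$ there, and that the resulting $f$ has compact support inside $\{w_{s+1}>0\}$---are handled correctly.

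The paper takes a different route: it invokes the Stone--Weierstrass theorem, checking that the family $\{J_{f,g}\}$ separates points and vanishes nowhere on each compact $B\subset\mathbb{R}^s\times(0,\infty)$. Your critique of that argument is well taken: the product the paper defines, $J_{f_1,g}J_{f_2,g}:=J_{f_1f_2,g}$, is \emph{not} the pointwise product of $J_{f_1,g}$ and $J_{f_2,g}$, so the set $\{J_{f,g}\}$ equipped with this operation is not literally a subalgebra of $C(B)$ in the sense Stone--Weierstrass requires. The paper's approach can be repaired by instead taking the pointwise algebra generated by the $J_{f,g}$ (separation and non-vanishing are easy to verify directly), but as written there is a gap. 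Your explicit surjectivity argument sidesteps the issue entirely, is shorter, and yields the sharper conclusion $\mathcal{A}=C_c(\mathbb{R}^s\times(0,\infty))$; the only cost is that it is specific to this integral transform, whereas Stone--Weierstrass is a general-purpose tool.
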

\begin{proof}
Let $B$ be a compact subset of $\mathbb{R}^{s}\times\left\{ v\in\mathbb{R}:v>0\right\} $.
Let $\mathcal{A}_{B}$ denote the subalgebra of $\mathcal{A}$ of
functions with support $B$. It is straightforward to check that the
algebra $\mathcal{A}_{B}$ separates points in $B$ and does not vanish
at any point in $B$. Therefore, by the Stone-Weierstrass Theorem
(cf. \cite{MR0385023}, Theorem 7.32) $\mathcal{A}_{B}$ is dense
in the space of continuous functions on $B$. Since $B$ is arbitrary
this implies the claim. 
\end{proof}

\begin{proof}[Proof of Theorem \ref{prop:crude upper bound}.]
Let $\epsilon>0$ be arbitrary and $g\in\mathcal{C}_{\textrm{SL}}\left(r_{1},r_{2}\right)$.
By Lemma \ref{lem:stone weirstarss} there exists a continuous non-negative
function $f$ on $\mathbb{R}_{+}^{d}$ of compact support so that
$J_{f,g}\geq1+\epsilon$ on $R\times\left[1,2\right]$. Then if $v\in\mathbb{R}^{d}$
satisfies $e^{t}\leq\Vert v\Vert\leq2e^{t}$, $M_{0}^{g}\left(v\right)\in R$
and $Q_{0}^{g}\left(v\right)=a$ then $J_{f,g}\left(M_{0}^{g}\left(v\right),\left\Vert v\right\Vert e^{-t}\right)\geq1+\epsilon$.
Then by Lemma \ref{lemma about evctors}, for sufficiently large $t$,
\[
C_{1}e^{\left(d-s-2\right)t}\int_{K_{g}}f\left(\hat{a}_{t}kv\right)d\nu_{g}\left(k\right)\geq1
\]
if $e^{t}\leq\Vert v\Vert\leq2e^{t}$, $M_{0}^{g}\left(v\right)\in R$,
and $Q_{0}^{g}\left(v\right)=a$. Then summing over $v\in X_{g}\left(\mathbb{Z}\right)$,
we get 
\begin{align}
\left|X_{g}\left(\mathbb{Z}\right)\cap V_{M}\left(\left[a,b\right]\right)\cap A\left(e^{t},2e^{t}\right)\right| & \leq\sum_{v\in X_{g}\left(\mathbb{Z}\right)}C_{1}e^{\left(d-s-2\right)t}\int_{K_{g}}f\left(\hat{a}_{t}kv\right)d\nu_{g}\left(k\right)\nonumber \\
 & =C_{1}e^{\left(d-s-2\right)t}\int_{K_{g}}F_{f,g}\left(\hat{a}_{t}k\right)d\nu_{g}\left(k\right).\label{eq:prop 1}
\end{align}
Note that 
\begin{equation}
\int_{K_{g}}F_{f,g}\left(\hat{a}_{t}k\right)d\nu_{g}\left(k\right)=\int_{K_{I}}F_{f,g}\left(g^{-1}a_{t}kg\right)d\nu_{I}\left(k\right).\label{eq:prop 2}
\end{equation}
By (\ref{eq:upper bound for function}) we have the bound $F_{f,g}\left(x\right)\leq c\left(f\right)\alpha\left(x\right)$
for all $x\in G_{g}/\Gamma_{g}$ where $c\left(f\right)$ is a constant
depending only on $f$. Since $g\in\mathcal{C}_{\textrm{SL}}\left(r_{1},r_{2}\right)$,
part I of Theorem \ref{thm:upper bound-1} implies that if $r_{1}\geq3$
and $r_{2}\geq1$ then 
\begin{equation}
\int_{K_{I}}F_{f,g}\left(g^{-1}a_{t}kg\right)d\nu_{I}\left(k\right)<c\left(f\circ g^{-1}\right)\int_{K_{I}}\alpha\left(a_{t}kg\right)d\nu_{I}\left(k\right)<\infty.\label{eq:prop 3}
\end{equation}
In the case when $r_{1}=2$ and $r_{2}=1$ or $r_{1}=r_{2}=2$ part
II of Theorem \ref{thm:upper bound-1} implies that for all $g\in\mathcal{C}_{\textrm{SL}}\left(r_{1},r_{2}\right)$
there exists a constant $C$ so that 
\begin{equation}
\int_{K_{I}}F_{f,g}\left(g^{-1}a_{t}kg\right)d\nu_{I}\left(k\right)<c\left(f\circ g^{-1}\right)\int_{K_{I}}\alpha\left(a_{t}kg\right)d\nu_{I}\left(k\right)<Ct.\label{eq:prop 3-1}
\end{equation}
Hence, (\ref{eq:prop 1}), (\ref{eq:prop 2}) and (\ref{eq:prop 3})
imply that as long as $r_{1}\geq3$ and $r_{2}\geq1$ there exists
a constant $C_{2}$ such that 
\[
\left|X_{g}\left(\mathbb{Z}\right)\cap V_{M}\left(R\right)\cap A\left(e^{t},2e^{t}\right)\right|\leq C_{2}e^{\left(d-s-2\right)t}.
\]
Similarly, (\ref{eq:prop 1}), (\ref{eq:prop 2}) and (\ref{eq:prop 3-1})
imply that if $r_{1}=2$ and $r_{2}=1$ or $r_{1}=r_{2}=2$, then
\[
\left|X_{g}\left(\mathbb{Z}\right)\cap V_{M}\left(R\right)\cap A\left(e^{t},2e^{t}\right)\right|\leq C_{2}te^{\left(d-s-2\right)t}.
\]
Since we can write $T=e^{t}$ and 
\[
A\left(0,T\right)=\lim_{n\rightarrow\infty}\left(A\left(0,T/2^{n}\right)\bigcup_{i=1}^{n}A\left(T/2^{i},T/2^{i-1}\right)\right),
\]
the Theorem follows by summing a geometric series.
\end{proof}
Theorem \ref{prop:crude upper bound} has the following Corollary
which is comparable with Proposition 3.7 from \cite{MR1609447} and
will be used in the proof of Theorem \ref{main theorem-1}.
\begin{cor}
\label{lem:sum approx}Let $f$ be a continuous function of compact
support on $\mathbb{R}_{+}^{d}$. Then for every $\epsilon>0$ and
$g\in\mathcal{C}_{\textrm{SL}}\left(r_{1},r_{2}\right)$ there exists
$t_{0}>0$ so that for $t>t_{0}$, 
\begin{equation}
\left|e^{-\left(d-s-2\right)t}\sum_{v\in X_{g}\left(\mathbb{Z}\right)}J_{f,g}\left(M_{0}^{g}\left(v\right),\left\Vert v\right\Vert e^{-t}\right)-C_{1}\int_{K_{g}}F_{f,g}\left(\hat{a}_{t}k\right)d\nu_{g}\left(k\right)\right|<\epsilon.\label{eq:3.3}
\end{equation}
\end{cor}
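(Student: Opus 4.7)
The plan is to exchange the sum and the integral on the right-hand side, reduce to a termwise comparison, and then apply Lemma \ref{lemma about evctors} together with the uniform upper bound from Theorem \ref{prop:crude upper bound}. By the definition \eqref{eq:def of F} of $F_{f,g}$ and Fubini (the sum defining $F_{f,g}(\hat a_t k)$ is locally finite for each fixed $k$ because $f$ has compact support), one has
\[
C_{1}\int_{K_{g}}F_{f,g}\left(\hat{a}_{t}k\right)d\nu_{g}\left(k\right)=C_{1}\sum_{v\in X_{g}\left(\mathbb{Z}\right)}\int_{K_{g}}f\left(\hat{a}_{t}kv\right)d\nu_{g}\left(k\right).
\]
Hence the quantity to estimate is bounded by
\[
e^{-\left(d-s-2\right)t}\sum_{v\in X_{g}\left(\mathbb{Z}\right)}\left|J_{f,g}\bigl(M_{0}^{g}\left(v\right),\left\Vert v\right\Vert e^{-t}\bigr)-C_{1}e^{\left(d-s-2\right)t}\int_{K_{g}}f\left(\hat{a}_{t}kv\right)d\nu_{g}\left(k\right)\right|.
\]

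Next I would observe that only finitely many terms contribute, uniformly in $t$ for $t$ large. Because $f$ is compactly supported in $\mathbb{R}_+^{d}$, there exist constants $0<c_{1}<c_{2}$ and a compact set $R\subset\mathbb{R}^{s}$ such that both $J_{f,g}(M_{0}^{g}(v),\left\Vert v\right\Vert e^{-t})$ and $\int_{K_{g}}f(\hat a_t k v)d\nu_g(k)$ vanish unless $M_{0}^{g}(v)\in R$ and $c_{1}e^{t}\leq\left\Vert v\right\Vert \leq c_{2}e^{t}$ (for the latter, one uses estimates \eqref{eq:1}--\eqref{eq:4} from the proof of Lemma \ref{lemma about evctors}; for the former, the compact support of $f$ in the $(s{+}1)$-th coordinate forces $\left\Vert v\right\Vert e^{-t}$ to be bounded away from $0$ and $\infty$). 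In particular, for $t$ larger than some $t_{1}$ only vectors with $\left\Vert v\right\Vert >T_{0}$ contribute, where $T_{0}$ is the threshold supplied by Lemma \ref{lemma about evctors}.

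Given $\varepsilon>0$, apply Lemma \ref{lemma about evctors} with some $\varepsilon'>0$ to be chosen. For $t$ larger than the resulting threshold and for every $v$ contributing to the sum, the absolute value inside the sum is $<\varepsilon'$. It remains to estimate the number of contributing $v$. By Theorem \ref{prop:crude upper bound} (applied to the region $R$ supplied above, and with $T=c_{2}e^{t}$), there is a constant $C$ depending only on $\left(Q,M\right)$ and $R$ such that
\[
\bigl|\{v\in X_{g}(\mathbb{Z}):M_{0}^{g}(v)\in R,\ \left\Vert v\right\Vert \leq c_{2}e^{t}\}\bigr|\leq C\, e^{\left(d-s-2\right)t}
\]
(or with an additional $t$ factor in the exceptional signatures, which is harmless for the following argument whenever $r_1\geq 3$; in the present application this is the case). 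Combining these two facts gives that the whole expression is at most $C\varepsilon'$, and choosing $\varepsilon'=\varepsilon/C$ yields the claimed bound.

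The main obstacle to executing this plan is simply verifying that only $O(e^{(d-s-2)t})$ integer vectors contribute, since this is what allows the cumulative error in the termwise comparison to remain bounded after multiplication by $e^{-(d-s-2)t}$. This is exactly what Theorem \ref{prop:crude upper bound} provides; aside from that, the argument is a straightforward triangle-inequality bookkeeping on top of Lemma \ref{lemma about evctors}.
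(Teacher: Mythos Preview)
Your proposal is correct and follows essentially the same approach as the paper: apply Lemma \ref{lemma about evctors} termwise to each $v\in X_g(\mathbb{Z})$ and control the total error by bounding the number of contributing terms via Theorem \ref{prop:crude upper bound}. The paper's proof is terser (it simply notes that $J_{f,g}$ has compact support so the number of nonzero terms is $O(e^{(d-s-2)t})$ and then sums Lemma \ref{lemma about evctors}), but your added bookkeeping on the Fubini step and the support constraints is accurate.
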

\begin{proof}
Since $J_{f,g}$ has compact support, the number of non zero terms
in the sum on the left hand side of (\ref{eq:3.3}) is bounded by
$ce^{\left(d-s-2\right)t}$ because of Theorem \ref{prop:crude upper bound}.
Hence summing the result of Lemma \ref{lemma about evctors} over
$v\in X_{g}\left(\mathbb{Z}\right)$ proves (\ref{eq:3.3}). 
\end{proof}

\subsection{Volume estimates.}

For a compactly supported function $h$ on $\mathbb{R}^{s}\times\mathbb{R}^{d}\setminus\left\{ 0\right\} $
we define 
\[
\Theta\left(h,T\right)=\int_{X_{g}\left(\mathbb{R}\right)}h\left(M_{0}^{g}\left(v\right),v/T\right)dm_{g}\left(v\right).
\]
For $\mathcal{X}\subseteq\mathbb{R}^{d}$ we will use the notation
$\textrm{Vol}_{X_{g}}\left(\mathcal{X}\right)=\int_{X_{g}\left(\mathbb{R}\right)}\mathbb{1}_{\mathcal{X}\cap X_{g}\left(\mathbb{R}\right)}dm_{g}$
to mean the volume of $\mathcal{X}$ with respect to the volume measure
on $X_{g}\left(\mathbb{R}\right)$. 

The following Lemma and its Corollary are analogous to Lemma 3.8 from
\cite{MR1609447} and the proofs share some similarities, although
it is here that the fact we are integrating over $X_{g}\left(\mathbb{R}\right)$
rather than the whole of $\mathbb{R}^{d}$ becomes an important distinction.
In Lemma \ref{lem:further approximations} we compute $\lim_{T\rightarrow\infty}\frac{1}{T^{d-s-2}}\Theta\left(h,T\right)$,
here it is crucial that $h$ is not defined on $\mathbb{R}^{s}\times\left\{ 0\right\} $;
if it was, using the fact that $h$ can be bounded by an integrable
function, one could directly pass the limit inside the integral and
the limit would be 0. The basic strategy of Lemma \ref{lem:further approximations}
is that we evaluate the integral \foreignlanguage{english}{$\int_{X_{g}\left(\mathbb{R}\right)}dm_{g}$}
by switching to polar coordinates. This has the effect that the integral
changes into an integral over two spheres, then we approximate the
spheres by an orbit of $K_{g}$ and an integral over the coordinates
fixed by $K_{g}$. 
\begin{lem}
\label{lem:further approximations}Suppose that $h$ is a continuous
function of compact support in $\mathbb{R}^{s}\times\mathbb{R}^{d}\setminus\left\{ 0\right\} $.
Then 
\[
\lim_{T\rightarrow\infty}\frac{1}{T^{d-s-2}}\Theta\left(h,T\right)=C_{1}\int_{K_{g}}\int_{0}^{\infty}\int_{\mathbb{R}^{s}}h\left(z,rke_{0}\right)r^{d-s-2}dz\frac{dr}{2r}d\nu_{g}\left(k\right),
\]
where $e_{0}$ is a unit vector in $\mathbb{R}^{d}$ and $C_{1}$
is the constant defined by \prettyref{eq:def of C_1}.\end{lem}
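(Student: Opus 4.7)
The approach is to work in coordinates adapted to the canonical form of $(Q_0^g,M_0^g)$ and to carry out a polar decomposition in the directions on which $K_g$ acts. Using the basis $f_1,\dots,f_d$ from the proof of Lemma~\ref{lemma about evctors}, decompose $\mathbb{R}^d=L_1\oplus L_2\oplus L_3$ with $L_1=\langle f_1,\dots,f_s\rangle$, $L_2=\langle f_{s+1},\dots,f_{s+r_1}\rangle$ and $L_3=\langle f_{s+r_1+1},\dots,f_d\rangle$, and write $v=z+y^++y^-$ accordingly. Then $M_0^g(v)=z$, $Q_0^g(v)=Q_1(z)+|y^+|^2-|y^-|^2$ for a non-degenerate form $Q_1$ on $L_1$, and $K_g\cong SO(r_1)\times SO(r_2)$ acts on $L_2\oplus L_3$ fixing $L_1$ pointwise. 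Introduce polar coordinates $y^\pm=\rho_\pm\omega^\pm$, $\omega^\pm\in S^{r_\pm-1}$; then $X_g(\mathbb{R})$ is parametrized by $(z,\rho_-,\omega^+,\omega^-)$ via $\rho_+=\sqrt{\rho_-^2+c(z)}$ with $c(z):=a-Q_1(z)$. A direct computation starting from \eqref{eq:def of m_0} (solving $d(Q_0^g-a)=0$ for $d\rho_+$ and substituting into $dv$) yields the Leray form
\[
dm_g^a=\frac{1}{2}\,\rho_+^{r_1-2}\,\rho_-^{r_2-1}\,dz\,d\rho_-\,d\omega^+\,d\omega^-,
\]
where $d\omega^\pm$ denotes the standard surface measure on $S^{r_\pm-1}$ of total volume $\gamma_{r_\pm-1}$.

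Next, rescale $\rho_-=Tr$. Since $h$ is compactly supported in $\mathbb{R}^s\times(\mathbb{R}^d\setminus\{0\})$, the variable $r$ is confined to a compact subset $[r_0,R_0]\subset(0,\infty)$ and $z$ to a compact subset of $\mathbb{R}^s$. In this regime $\rho_+=Tr\sqrt{1+c(z)/(Tr)^2}=Tr+O(T^{-1})$, so $v/T=z/T+(\rho_+/T)\omega^++r\omega^-\to r(\omega^++\omega^-)$ uniformly as $T\to\infty$. Substituting $d\rho_-=T\,dr$ and $\rho_+^{r_1-2}=(Tr)^{r_1-2}(1+O(T^{-2}))$ into $\Theta(h,T)$ and passing to the limit via uniform continuity of $h$ together with dominated convergence (the integrand has uniform compact support after rescaling), with $r_1+r_2=d-s$ one obtains
\[
\frac{\Theta(h,T)}{T^{d-s-2}}\longrightarrow\frac{1}{2}\int_{\mathbb{R}^s}\int_0^\infty\int_{S^{r_1-1}\times S^{r_2-1}}h\bigl(z,r(\omega^++\omega^-)\bigr)\,r^{d-s-3}\,d\omega^+\,d\omega^-\,dr\,dz.
\]

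To identify this limit with the right-hand side, pick a unit vector $e_0:=(\omega^+_0+\omega^-_0)/\sqrt{2}$ for arbitrary $\omega^\pm_0\in S^{r_\pm-1}$. For $k=(k_1,k_2)\in K_g$ one has $ke_0=(k_1\omega^+_0+k_2\omega^-_0)/\sqrt{2}$, and the pushforward of $\nu_g$ under $k\mapsto(k_1\omega^+_0,k_2\omega^-_0)$ is the normalized product surface measure $d\omega^+\,d\omega^-/c_{r_1,r_2}$ on $S^{r_1-1}\times S^{r_2-1}$. The substitution $r=r'/\sqrt{2}$ in the limit integral converts $r(\omega^++\omega^-)$ into $r'ke_0$ and produces a Jacobian factor $2^{-(d-s-2)/2}$ from $r^{d-s-3}\,dr$; combined with the normalization $c_{r_1,r_2}$ this reproduces exactly the constant $C_1=c_{r_1,r_2}\,2^{(2-d+s)/2}$ of \eqref{eq:def of C_1} and yields the claimed identity. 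The main delicacy lies in the Leray form computation in these curvilinear coordinates and in checking that the single parametrization uniformly covers both branches $c(z)\gtrless 0$ of the hyperboloid (the range of $\rho_-$ is adjusted to keep $\rho_+$ real); since $r\geq r_0>0$ on the support of $h$, the rescaled region stays uniformly away from the coordinate singularity at $\rho_+=0$ for all large $T$.
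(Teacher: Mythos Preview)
Your argument is correct and reaches the same conclusion as the paper, but the organisation is genuinely different. The paper diagonalises $Q_0^g$ on \emph{all} of $\mathbb{R}^d$ (so that $M_0^g=J\circ\mathrm{proj}$ picks up a $J\in GL_s(\mathbb{R})$), parametrises $X_g(\mathbb{R})$ by global hyperbolic polar coordinates $v=\sqrt{a}(\alpha\cosh t,\beta\sinh t)$ with $(\alpha,\beta)\in S^{p-1}\times S^{d-p-1}$, and only afterwards splits off the first $s$ sphere-variables, introducing a density $\delta(\alpha,\beta)$ and the Jacobian $\det(J)^{-1}$ which eventually cancel. You instead keep the $M_0^g$-direction $L_1$ flat from the outset (using the $f$-basis of Lemma~\ref{lemma about evctors}, where $M_0^g$ is already the coordinate projection) and take polar coordinates only on $L_2\oplus L_3$; the Leray form $dm_g^a=\tfrac12\rho_+^{r_1-2}\rho_-^{r_2-1}\,dz\,d\rho_-\,d\omega^+\,d\omega^-$ then drops out immediately, with no auxiliary density or $GL_s$-Jacobian to track. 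Both routes finish with the same substitution $r\mapsto r/\sqrt{2}$ and the identification of $d\omega^+\,d\omega^-$ with $c_{r_1,r_2}\,d\nu_g$.

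Your approach is cleaner for this lemma because the decomposition $\mathbb{R}^d=L_1\oplus L_2\oplus L_3$ is exactly the one adapted to the $K_g$-action, so the passage to the $K_g$-integral at the end is tautological. The paper's global hyperbolic parametrisation has the minor advantage of making the $a$-dependence of $m_g^a$ explicit (via the prefactor $a^{(d-2)/2}$ in $P$), though this is not used here. Your remark that the support of $h$ forces $r\geq r_0>0$ uniformly for large $T$ is the right way to justify dominated convergence and to avoid the coordinate singularity; note that strictly speaking this lower bound only holds once $T$ is large enough that the $z/T$ and $c(z)/T^2$ contributions are negligible, which you correctly flag.
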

\begin{proof}
By Lemma 2.2 of \cite{2011arXiv1111.4428S}, for all $g\in\mathcal{C}_{\textrm{SL}}\left(r_{1},r_{2}\right)$
there exists a basis of $\mathbb{R}^{d}$, denoted by $f_{1},\ldots,f_{d}$
such that 
\[
Q_{0}^{g}\left(v\right)=\sum_{i=1}^{s_{1}}v_{i}^{2}-\sum_{i=s_{1}+1}^{s}v_{i}^{2}+\sum_{i=s+1}^{s+r_{1}}v_{i}^{2}-\sum_{i=s+r_{1}+1}^{d}v_{i}^{2}\quad\textrm{and}\quad M_{0}^{g}\left(v\right)=J\left(v_{1},\ldots,v_{s}\right),
\]
where $v_{i}=\left\langle v,f_{i}\right\rangle $ for $1\leq i\leq d$
, $J\in GL_{s}\left(\mathbb{R}\right)$, $s_{1}$ is a non-negative
integer such that $r_{1}+s_{1}=p$ and $s_{2}$ is a non-negative
integer such that $r_{2}+s_{2}=d-p$. Let $L_{1}=\left\langle v_{1},\dots,v_{s_{1}},v_{s+1},\dots,v_{s+r_{1}}\right\rangle $,
$L_{2}=\left\langle v_{s_{1}+1},\dots,v_{s},v_{s+r_{1}+1},\dots,v_{d}\right\rangle $,
$S^{p-1}$ be the unit sphere inside $L_{1}$ and $S^{d-p-1}$ be
the unit sphere inside $L_{2}$. Let $\alpha\in S^{p-1}$ and $\beta\in S^{d-p-1}$.
Using polar coordinates, we can parametrise  $v\in X_{g}\left(\mathbb{R}\right)$
so that 
\begin{equation}
v_{i}=\begin{cases}
\sqrt{a}\alpha_{i}\cosh t & \textrm{for }1\leq i\leq s_{1}\\
\sqrt{a}\beta_{i-s_{1}}\sinh t & \textrm{for }s_{1}+1\leq i\leq s\\
\sqrt{a}\alpha_{i-s+s_{1}}\cosh t & \textrm{for }s+1\leq i\leq s+r_{1}\\
\sqrt{a}\beta_{i-s_{1}-r_{1}}\sinh t & \textrm{for }s+r_{1}+1\leq i\leq d.
\end{cases}\label{eq:5.40}
\end{equation}
In these coordinates we may write 
\[
dm_{g}\left(v\right)=\frac{a^{\left(d-2\right)/2}}{2}\cosh^{p-1}t\sinh^{q-1}tdtd\xi\left(\alpha,\beta\right)=P\left(e^{t}\right)dtd\xi\left(\alpha,\beta\right),
\]
where $P\left(x\right)=\frac{a^{\left(d-2\right)/2}}{2^{d-1}}x^{d-2}+O\left(x^{d-3}\right)$
and $\xi$ is the Haar measure on $S^{p-1}\times S^{q-1}$. Making
the change of variables, $r=\frac{\sqrt{a}e^{t}}{2T}$, gives 
\begin{equation}
\sqrt{a}\cosh t=Tr+a/4Tr\quad\textrm{and}\quad\sqrt{a}\sinh t=Tr-a/4Tr.\label{eq:5.41}
\end{equation}
Let $L_{1}'=\left\langle v_{s+1},\dots,v_{s+r_{1}}\right\rangle $,
$L_{2}'=\left\langle v_{s+r_{1}+1},\dots,v_{d}\right\rangle $, $S^{r_{1}-1}$
be the unit sphere inside $L_{1}'$, $S^{r_{2}-1}$ be the unit sphere
inside $L_{2}'$, $\alpha'\in S^{r_{1}-1}$ and $\beta'\in S^{r_{2}-1}$.
We may write 
\[
d\xi\left(\alpha,\beta\right)=\delta\left(\alpha,\beta\right)d\alpha_{1}\dots d\alpha_{s_{1}}d\beta_{1}\dots d\beta_{s_{2}}d\xi'\left(\alpha',\beta'\right)
\]
where $\delta\left(\alpha,\beta\right)$ is the appropriate density
function and $d\xi'$ is the Haar measure on $S^{r_{1}-1}\times S^{r_{2}-1}.$
This gives
\begin{equation}
dm_{g}\left(v\right)=P\left(\frac{2Tr}{\sqrt{a}}\right)\delta\left(\alpha,\beta\right)\frac{dr}{r}d\alpha_{1}\dots d\alpha_{s_{1}}d\beta_{1}\dots d\beta_{s_{2}}d\xi'\left(\alpha',\beta'\right).\label{eq:5.411}
\end{equation}
Let $z\in\mathbb{R}^{s}$. Make the further change of variables 
\begin{equation}
\left(\alpha_{1},\dots,\alpha_{s_{1}},\beta_{1},\dots,\beta_{s-s_{1}}\right)=\frac{1}{Tr}J^{-1}z,\label{eq:5.42}
\end{equation}
this means that 
\begin{equation}
d\alpha_{1}\dots d\alpha_{s_{1}}d\beta_{1}\dots d\beta_{s_{2}}=\frac{1}{\det\left(J\right)\left(Tr\right)^{s}}dz.\label{eq:5.43}
\end{equation}
Moreover, using \eqref{eq:5.40}, \eqref{eq:5.41} and \eqref{eq:5.42}
gives 
\begin{equation}
M_{0}^{g}\left(v\right)=z+O\left(1/T\right)\quad\textrm{and}\quad v/T=r\left(\alpha'+\beta'\right)+O\left(1/T\right).\label{eq:5.44}
\end{equation}
Since $h$ is continuous and compactly supported it may bounded by
an integrable function and hence
\begin{alignat*}{1}
\lim_{T\rightarrow\infty}\frac{1}{T^{d-s-2}}\Theta\left(h,T\right) & =\lim_{T\rightarrow\infty}\frac{1}{T^{d-s-2}}\int_{X_{g}\left(\mathbb{R}\right)}h\left(M_{0}^{g}\left(v\right),v/T\right)dm_{g}\left(v\right)\\
 & =\int_{X_{g}\left(\mathbb{R}\right)}\lim_{T\rightarrow\infty}\frac{1}{T^{d-s-2}}h\left(M_{0}^{g}\left(v\right),v/T\right)dm_{g}\left(v\right)\\
 & =\int_{S^{r_{1}-1}\times S^{r_{2}-1}}\int_{0}^{\infty}\int_{\mathbb{R}^{s}}h\left(z,r\left(\alpha'+\beta'\right)\right)r^{d-s-2}\delta\left(\alpha',\beta'\right)dz\frac{dr}{2r}d\xi'\left(\alpha',\beta'\right),
\end{alignat*}
where in the last step follows from \eqref{eq:5.411}, the definition
of $P\left(x\right)$, \eqref{eq:5.43} and \eqref{eq:5.44}. Note
that from the definition of $\delta$ it is clear that $\delta\left(\alpha',\beta'\right)=1$.
Finally, let $e_{0}=\frac{1}{\sqrt{2}}\left(f_{1}+f_{p+1}\right)$
and $\frac{1}{\sqrt{2}}\left(\alpha'+\beta'\right)=ke_{0}$ and $r'=\sqrt{2}r$
to get that \textcolor{black}{
\[
\lim_{T\rightarrow\infty}\frac{1}{T^{d-s-2}}\Theta\left(h,T\right)=C_{1}\int_{K_{g}}\int_{0}^{\infty}\int_{\mathbb{R}^{s}}h\left(z,r'ke_{0}\right)r'^{d-s-2}dz\frac{dr'}{2r'}d\nu_{g}\left(k\right).
\]
}\end{proof}
\begin{cor}
\label{cor:volume estimate}For all $g\in\mathcal{C}_{\textrm{SL}}\left(r_{1},r_{2}\right)$
there exists a constant $C_{3}>0$ such that for all compact regions
$R\subset\mathbb{R}^{s}$ with piecewise smooth boundary 
\[
\lim_{T\rightarrow\infty}\dfrac{1}{T^{d-s-2}}\textrm{Vol}_{X_{g}}\left(V_{M_{0}^{g}}\left(R\right)\cap A\left(T/2,T\right)\right)=C_{3}\textrm{Vol}\left(R\right).
\]
\end{cor}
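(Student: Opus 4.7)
The plan is to recognise the volume as a special value of the auxiliary integral $\Theta(h, T)$ and then invoke Lemma \ref{lem:further approximations}. Set
\[
h_0(z, w) = \mathbb{1}_R(z)\,\mathbb{1}_{A(1/2,1)}(w),
\]
so that by the definitions of $V_{M_0^g}$, $A$, and $\Theta$,
\[
\textrm{Vol}_{X_g}\bigl(V_{M_0^g}(R) \cap A(T/2, T)\bigr) = \Theta(h_0, T).
\]
Since $R$ is compact and $A(1/2,1)$ is a compact subset of $\mathbb{R}^d \setminus \{0\}$, $h_0$ has compact support in $\mathbb{R}^s \times (\mathbb{R}^d \setminus \{0\})$, but it fails to be continuous, so the Lemma cannot be applied to $h_0$ directly.

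To bridge this gap, I would use a standard sandwiching argument. For each $\eta > 0$, one constructs continuous compactly supported $h_\eta^- \leq h_0 \leq h_\eta^+$ on $\mathbb{R}^s \times (\mathbb{R}^d \setminus \{0\})$ agreeing with $h_0$ off an $\eta$-neighbourhood of $\partial(R \times A(1/2,1))$; this is possible because $\partial R$ is piecewise smooth and the spheres $\{\|w\| = 1/2\}$ and $\{\|w\|=1\}$ are smooth. Applying Lemma \ref{lem:further approximations} to each $h_\eta^\pm$, monotonicity of $\Theta$ then sandwiches $\limsup_T$ and $\liminf_T$ of $\Theta(h_0, T)/T^{d-s-2}$ between the corresponding limit integrals. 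Letting $\eta \to 0$ and using that the limit measure charges neither $\partial R$ nor the two boundary spheres (sets of measure zero under the product of Lebesgue measure in $z$ and the measure $r^{d-s-3}\,dr\,d\nu_g$ in the remaining variables), both sides converge to
\[
C_1 \int_{K_g} \int_0^\infty \int_{\mathbb{R}^s} \mathbb{1}_R(z)\, \mathbb{1}_{A(1/2,1)}(rke_0)\, r^{d-s-2}\, dz\, \frac{dr}{2r}\, d\nu_g(k).
\]

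The final step is the key simplification: the proof of Lemma \ref{lem:further approximations} exhibits $K_g$ as $SO(r_1) \times SO(r_2)$ acting by Euclidean isometries on the subspace containing the unit vector $e_0$ in the orthonormal basis $f_i$ diagonalising $Q_0^g$. Hence $\|rke_0\| = r$, so $\mathbb{1}_{A(1/2,1)}(rke_0) = \mathbb{1}_{[1/2,1]}(r)$, and the triple integral factors. Using $\nu_g(K_g) = 1$ and computing $\int_{1/2}^1 r^{d-s-3}\,dr = (1 - 2^{s+2-d})/(d-s-2)$ yields
\[
C_3 = \frac{C_1\,(1 - 2^{s+2-d})}{2(d-s-2)},
\]
which is independent of $g$ and strictly positive since $d - s = r_1 + r_2 \geq 3$.

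The only place any care is needed is the approximation step, but it is routine given the regularity of $\partial R$ and of the two spheres, so I do not anticipate a serious obstacle.
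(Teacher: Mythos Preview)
Your approach is essentially identical to the paper's: recognise the volume as $\Theta(\mathbb{1},T)$, sandwich the indicator $\mathbb{1}_{R\times A(1/2,1)}$ between continuous compactly supported functions, apply Lemma~\ref{lem:further approximations}, and factor out $\textrm{Vol}(R)$.

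There is, however, a slip in your ``key simplification''. You assert that $K_g$ acts by Euclidean isometries in the standard norm, so that $\|rke_0\|=r$. But by definition $K_g=H_g\cap g^{-1}O_d(\mathbb{R})g$, so elements of $K_g$ preserve the inner product $\langle gv,gw\rangle$, not the standard one; equivalently, the basis $f_i$ diagonalising $Q_0^g$ has no reason to be orthonormal for the standard inner product when $g\neq I$. Consequently $\|ke_0\|$ may genuinely vary with $k$, the inner integral does not reduce to $\int_{1/2}^{1}r^{d-s-3}\,dr$, and your explicit formula for $C_3$ (and its claimed independence from $g$) is not justified.

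Fortunately this does not damage the Corollary itself. The statement only asserts the existence of some $C_3>0$, which may depend on $g$, and the paper accordingly stops short of any explicit evaluation: it simply observes that
\[
\int_{K_g}\int_0^\infty \mathbb{1}_{A(1/2,1)}(rke_0)\,r^{d-s-2}\,\frac{dr}{2r}\,d\nu_g(k)
\]
is finite because $K_g$ is compact and $\mathbb{1}_{A(1/2,1)}$ has compact support (positivity being clear since $e_0\neq 0$). Dropping your final paragraph and recording the integral as a finite positive constant depending on $g$ gives exactly the paper's proof.
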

\begin{proof}
Let $\mathbb{1}$ denote the characteristic function of $R\times A\left(1/2,1\right)$,
then it is clear that 
\begin{alignat*}{1}
\lim_{T\rightarrow\infty}\dfrac{1}{T^{d-s-2}}\textrm{Vol}_{X_{g}}\left(V_{M_{0}^{g}}\left(R\right)\cap A\left(T/2,T\right)\right) & =\lim_{T\rightarrow\infty}\dfrac{1}{T^{d-s-2}}\int_{X_{g}\left(\mathbb{R}\right)}\mathbb{1}\left(M_{0}\left(gv\right),v/T\right)dm_{g}\left(v\right)\\
 & =\lim_{T\rightarrow\infty}\frac{1}{T^{d-s-2}}\Theta\left(\mathbb{1},T\right).
\end{alignat*}
Since $R$ has piecewise smooth boundary there exist regions $R_{\delta}^{-}\subseteq R\times A\left(1/2,1\right)\subseteq R_{\delta}^{+}$
such that $\lim_{\delta\rightarrow0}R_{\delta}^{+}=\lim_{\delta\rightarrow0}R_{\delta}^{-}=R$
and for all $\delta>0$ we can choose continuous compactly supported
functions $h_{\delta}^{-}$ and $h_{\delta}^{+}$on $\mathbb{R}^{s}\times\mathbb{R}^{d}\setminus0$,
such that $0\leq h_{\delta}^{-}\leq\mathbb{1}\leq h_{\delta}^{+}\leq1$,
$h_{\delta}^{-}\left(v\right)=\mathbb{1}\left(v\right)$ if $v\in R_{\delta}^{-}$
and $h_{\delta}^{+}\left(v\right)=0$ if $v\notin R_{\delta}^{+}$.
By Lemma \ref{lem:further approximations} 
\begin{alignat*}{1}
\lim_{T\rightarrow\infty}\frac{1}{T^{d-s-2}}\Theta\left(h_{\delta}^{-},T\right) & \leq\liminf_{T\rightarrow\infty}\dfrac{1}{T^{d-s-2}}\int_{X_{g}\left(\mathbb{R}\right)}\mathbb{1}\left(M_{0}\left(gv\right),v/T\right)dm_{g}\left(v\right)\\
 & \leq\limsup_{T\rightarrow\infty}\dfrac{1}{T^{d-s-2}}\int_{X_{g}\left(\mathbb{R}\right)}\mathbb{1}\left(M_{0}\left(gv\right),v/T\right)dm_{g}\left(v\right)\leq\lim_{T\rightarrow\infty}\frac{1}{T^{d-s-2}}\Theta\left(h_{\delta}^{+},T\right).
\end{alignat*}
It is clear that 
\[
\lim_{\delta\rightarrow\infty}\lim_{T\rightarrow\infty}\frac{1}{T^{d-s-2}}\Theta\left(h_{\delta}^{-},T\right)=\lim_{\delta\rightarrow\infty}\lim_{T\rightarrow\infty}\frac{1}{T^{d-s-2}}\Theta\left(h_{\delta}^{+},T\right)=\lim_{T\rightarrow\infty}\frac{1}{T^{d-s-2}}\Theta\left(\mathbb{1},T\right),
\]
hence we can apply Lemma \ref{lem:further approximations} to get
that 
\begin{alignat*}{1}
\lim_{T\rightarrow\infty}\frac{1}{T^{d-s-2}}\Theta\left(\mathbb{1},T\right) & =C_{1}\int_{K_{g}}\int_{0}^{\infty}\int_{\mathbb{R}^{s}}\mathbb{1}\left(z,rk^{-1}e_{0}\right)r^{d-s-2}dz\frac{dr}{2r}d\nu_{g}\left(k\right)\\
 & =C_{1}\int_{\mathbb{R}^{s}}\mathbb{1}_{R}\left(z\right)dz\int_{K_{g}}\int_{0}^{\infty}\mathbb{1}_{A\left(1/2,1\right)}\left(rk^{-1}e_{0}\right)r^{d-s-2}\frac{dr}{2r}d\nu_{g}\left(k\right)=C_{3}\textrm{Vol}\left(R\right).
\end{alignat*}
The last equality holds because 
\[
\int_{K_{g}}\int_{0}^{\infty}\mathbb{1}_{A\left(1/2,1\right)}\left(rk^{-1}e_{0}\right)r^{d-s-2}\frac{dr}{2r}d\nu_{g}\left(k\right)<\infty
\]
as $\mathbb{1}_{A\left(1/2,1\right)}$ has compact support and $K_{g}$
is compact. 
\end{proof}

\subsection{Proof of Theorem \ref{main theorem-1}.}

By Theorem 4.9 of \cite{MR1278263} there exist $v_{1},\ldots,v_{j}\in X_{g}\left(\mathbb{\mathbb{Z}}\right)$
such that $X_{g}\left(\mathbb{\mathbb{Z}}\right)=\bigsqcup_{i=1}^{j}\Gamma_{g}v_{i}$.
Let $P_{i}\left(g\right)=\left\{ x\in G_{g}:xv_{i}=v_{i}\right\} $
and $\Lambda_{i}\left(g\right)=P_{i}\left(g\right)\cap\Gamma_{g}$.
By Proposition 1.13 of \cite{MR1790156} there exist Haar measures
$\varrho_{\Lambda_{i}}$, $p_{\Lambda_{i}}$ and $\gamma_{\Lambda_{i}}$on
$G_{g}/\Lambda_{i}\left(g\right)$, $P_{i}\left(g\right)/\Lambda_{i}\left(g\right)$
and $\Gamma_{g}/\Lambda_{i}\left(g\right)$ respectively such that,
for \foreignlanguage{english}{$f\in C_{c}\left(G_{g}/\Lambda_{i}\left(g\right)\right)$},
and hence for integrable functions on $G_{g}/\Lambda_{i}\left(g\right)$,
\begin{equation}
\int_{G_{g}/\Lambda_{i}\left(g\right)}fd\varrho_{\Lambda_{i}}=\int_{X_{g}\left(\mathbb{R}\right)}\int_{P_{i}\left(g\right)/\Lambda_{i}\left(g\right)}f\left(xp\right)dp_{\Lambda_{i}}\left(p\right)dm_{g}\left(x\right),\label{eq:s1}
\end{equation}
and
\begin{equation}
\int_{G_{g}/\Lambda_{i}\left(g\right)}fd\varrho_{\Lambda_{i}}=\int_{G_{g}/\Gamma_{g}}\int_{\Gamma_{g}/\Lambda_{i}\left(g\right)}f\left(x\gamma\right)d\gamma_{\Lambda_{i}}\left(\gamma\right)d\mu_{g}\left(x\right).\label{eq:s2}
\end{equation}
Note that $\Gamma_{g}/\Lambda_{i}\left(g\right)=\Gamma_{g}v_{i}$
is discrete and its Haar measure $d\gamma_{\Lambda_{i}}$ is just
the counting measure and so 
\begin{equation}
\int_{\Gamma_{g}/\Lambda_{i}\left(g\right)}f\left(x\gamma\right)d\gamma_{\Lambda_{i}}\left(\gamma\right)=\sum_{v\in\Gamma_{g}v_{i}}f\left(xv\right).\label{eq:siegle 2-1}
\end{equation}
Therefore the normalisations already present on $m_{g}$ and $\mu_{g}$
induce a normalisation on $p_{\Lambda_{i}}$. Moreover, it follows
from the Borel Harish-Chandra Theorem (cf. \cite{MR1278263}, Theorem
4.13) that the measure of $p_{\Lambda_{i}}\left(P_{i}\left(g\right)/\Lambda_{i}\left(g\right)\right)<\infty$,
for each $1\leq i\leq j$. As in \cite{MR1609447} and \cite{MR1237827}
where the proofs rely on Siegel's integral formula, here the proof
relies on the following result.
\begin{lem}
\label{siegel mod} For all $f\in C_{c}\left(X_{g}\left(\mathbb{R}\right)\right)$
and $g\in\mathcal{C}_{\textrm{SL}}\left(r_{1},r_{2}\right)$ there
exists a constant 
\[
C\left(g\right)=\sum_{i=1}^{j}p_{\Lambda_{i}}\left(P_{i}\left(g\right)/\Lambda_{i}\left(g\right)\right),
\]
such that 
\begin{equation}
C\left(g\right)\int_{X_{g}\left(\mathbb{R}\right)}fdm_{g}=\int_{G_{g}/\Gamma_{g}}F_{f,g}d\mu_{g}.\label{eq:siegl formula}
\end{equation}
\end{lem}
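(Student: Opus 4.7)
The plan is to unwind the right-hand side using the orbit decomposition $X_g(\mathbb{Z})=\bigsqcup_{i=1}^{j}\Gamma_{g}v_{i}$ together with the two Haar measure disintegration formulas \eqref{eq:s1} and \eqref{eq:s2} applied in succession. By definition of $F_{f,g}$ and this decomposition,
\[
\int_{G_{g}/\Gamma_{g}}F_{f,g}\,d\mu_{g}=\sum_{i=1}^{j}\int_{G_{g}/\Gamma_{g}}\sum_{v\in\Gamma_{g}v_{i}}f\left(xv\right)d\mu_{g}\left(x\right),
\]
so it suffices to treat each index $i$ separately and then sum.

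Fix $i$ and define $\tilde{f}_{i}:G_{g}\rightarrow\mathbb{R}$ by $\tilde{f}_{i}\left(y\right)=f\left(yv_{i}\right)$. Since $\Lambda_{i}\left(g\right)\subseteq P_{i}\left(g\right)=\textrm{Stab}_{G_{g}}\left(v_{i}\right)$, the function $\tilde{f}_{i}$ is invariant under the right action of $\Lambda_{i}\left(g\right)$ and therefore descends to a function on $G_{g}/\Lambda_{i}\left(g\right)$. Because $f$ has compact support on $X_{g}\left(\mathbb{R}\right)$ and the Borel--Harish-Chandra theorem ensures that $p_{\Lambda_{i}}\left(P_{i}\left(g\right)/\Lambda_{i}\left(g\right)\right)<\infty$, the function $\tilde{f}_{i}$ is integrable on $G_{g}/\Lambda_{i}\left(g\right)$ with respect to $\varrho_{\Lambda_{i}}$. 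Rewriting the inner sum as a sum over coset representatives and invoking \eqref{eq:siegle 2-1} together with \eqref{eq:s2}, we obtain
\[
\int_{G_{g}/\Gamma_{g}}\sum_{v\in\Gamma_{g}v_{i}}f\left(xv\right)d\mu_{g}\left(x\right)=\int_{G_{g}/\Gamma_{g}}\sum_{\gamma\in\Gamma_{g}/\Lambda_{i}\left(g\right)}\tilde{f}_{i}\left(x\gamma\right)d\mu_{g}\left(x\right)=\int_{G_{g}/\Lambda_{i}\left(g\right)}\tilde{f}_{i}\,d\varrho_{\Lambda_{i}}.
\]

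The crucial observation is now that $\tilde{f}_{i}$ is right $P_{i}\left(g\right)$-invariant: for all $p\in P_{i}\left(g\right)$ one has $\tilde{f}_{i}\left(yp\right)=f\left(ypv_{i}\right)=f\left(yv_{i}\right)=\tilde{f}_{i}\left(y\right)$. Hence in the disintegration \eqref{eq:s1} the inner integral collapses,
\[
\int_{P_{i}\left(g\right)/\Lambda_{i}\left(g\right)}\tilde{f}_{i}\left(xp\right)dp_{\Lambda_{i}}\left(p\right)=p_{\Lambda_{i}}\left(P_{i}\left(g\right)/\Lambda_{i}\left(g\right)\right)f\left(xv_{i}\right),
\]
where under the parametrisation of $X_{g}\left(\mathbb{R}\right)$ by $G_{g}/P_{i}\left(g\right)$ via $x\mapsto xv_{i}$ the value $f\left(xv_{i}\right)$ is to be read as $f$ evaluated at the corresponding point in the real orbit. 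Substituting back and summing over $i$ yields \eqref{eq:siegl formula} with the stated constant $C\left(g\right)=\sum_{i=1}^{j}p_{\Lambda_{i}}\left(P_{i}\left(g\right)/\Lambda_{i}\left(g\right)\right)$.

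The main thing to verify is the compatibility of the measure $m_{g}$ with the disintegration \eqref{eq:s1}: one needs that, across the various $i$, integration on the various real orbits $G_{g}v_{i}$ with the invariant measure coming from $\varrho_{\Lambda_{i}}/p_{\Lambda_{i}}$ glues together to give exactly $m_{g}$ on $X_{g}\left(\mathbb{R}\right)$ with the normalisation specified in \eqref{eq:def of m_0}. This is the bookkeeping step where one must track the pushforwards consistently, but it is essentially forced once $m_{g}$ and $\mu_{g}$ are fixed; no new analytic input is required. The exchange of summation and integration in the first display is justified by the same finiteness argument (compact support of $f$ and finite covolume of $\Lambda_{i}\left(g\right)$ in $P_{i}\left(g\right)$).
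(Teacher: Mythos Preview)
Your proof is correct and follows essentially the same route as the paper's: decompose $F_{f,g}$ over the orbits $\Gamma_g v_i$, lift each summand to $G_g/\Lambda_i(g)$ via \eqref{eq:siegle 2-1} and \eqref{eq:s2}, then use right $P_i(g)$-invariance of $\tilde f_i$ to collapse the inner integral in \eqref{eq:s1}. Your residual concern about ``gluing'' the measures across the various $i$ is moot: the paper records that $G_g/P_i(g)\cong X_g(\mathbb{R})$ for every $i$ (the real action is transitive), and \eqref{eq:s1} already fixes $m_g$ as the outer measure, so no further compatibility check is required.
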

\begin{proof}
Note that for $1\leq i\leq j$, $G_{g}/P_{i}\left(g\right)\cong X_{g}\left(\mathbb{R}\right)$.
If $f\in C_{c}\left(X_{g}\left(\mathbb{R}\right)\right)$ then $f$
is $\Lambda_{i}\left(g\right)$ invariant and therefore can be considered
as an integrable function on $G_{g}/\Lambda_{i}\left(g\right)$ and
so 
\begin{equation}
\int_{X_{g}\left(\mathbb{R}\right)}\int_{P_{i}\left(g\right)/\Lambda_{i}\left(g\right)}f\left(xp\right)dp_{\Lambda_{i}}\left(p\right)dm_{g}\left(x\right)=\int_{P_{i}\left(g\right)/\Lambda_{i}\left(g\right)}dp_{\Lambda_{i}}\int_{X_{g}\left(\mathbb{R}\right)}fdm_{g}.\label{eq:s4}
\end{equation}
Now it follows from the definition of $F_{f,g}$ (i.e. \eqref{eq:def of F}),
\prettyref{eq:s1}, \prettyref{eq:s2}, \prettyref{eq:siegle 2-1}
and \prettyref{eq:s4} that 
\begin{alignat*}{1}
\int_{G_{g}/\Gamma_{g}}F_{f,g}d\mu_{g} & =\sum_{i=1}^{j}\int_{G_{g}/\Gamma_{g}}\sum_{v\in\Gamma_{g}v_{i}}f\left(xv\right)d\mu_{g}\left(x\right)\\
 & =\sum_{i=1}^{j}\int_{P_{i}\left(g\right)/\Lambda_{i}\left(g\right)}dp_{\Lambda_{i}}\int_{X_{g}\left(\mathbb{R}\right)}fdm_{g},
\end{alignat*}
which is the desired result.
\end{proof}
The final Lemma of this section is the counterpart of Lemma 3.9 from
\cite{MR1609447} and again the proof there is mimicked. 
\begin{lem}
\label{lem:Link}Let $f$ be a continuous function of compact support
on $\mathbb{R}_{+}^{d}$. Then for all $g\in\mathcal{C}_{\textrm{SL}}\left(r_{1},r_{2}\right)$,
\[
\lim_{T\rightarrow\infty}\frac{1}{T^{d-s-2}}\int_{X_{g}\left(\mathbb{R}\right)}J_{f,g}\left(M_{0}^{g}\left(v\right),\left\Vert v\right\Vert /T\right)dm_{g}\left(v\right)=C_{1}C\left(g\right)\int_{G_{g}/\Gamma_{g}}F_{f,g}d\mu_{g},
\]
where $C_{1}$ is defined by \eqref{eq:def of C_1} and $C\left(g\right)$
is defined in Lemma \ref{siegel mod}.\end{lem}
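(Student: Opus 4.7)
\textbf{Proof proposal for Lemma \ref{lem:Link}.}

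The plan is to recast the left-hand integral as $\Theta(h,T)$ for a suitable $h$, apply the volume formula of Lemma \ref{lem:further approximations}, identify the result with $C_{1}\int_{X_{g}(\mathbb{R})}f\,dm_{g}$ via a direct Jacobian computation, and then invoke the Siegel-type identity of Lemma \ref{siegel mod}.

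Define $h:\mathbb{R}^{s}\times(\mathbb{R}^{d}\setminus\{0\})\to\mathbb{R}$ by $h(z,w)=J_{f,g}(z,\|w\|)$; then directly from the definitions
\[
\int_{X_{g}(\mathbb{R})}J_{f,g}\left(M_{0}^{g}(v),\|v\|/T\right)dm_{g}(v)=\Theta(h,T).
\]
Since $\mathrm{supp}(f)\subset\mathbb{R}_{+}^{d}$ is bounded away from $\{v_{s+1}=0\}$, the variable $r$ is confined to a compact subinterval of $(0,\infty)$ on $\mathrm{supp}(J_{f,g})$, so $h$ is continuous with compact support in $\mathbb{R}^{s}\times(\mathbb{R}^{d}\setminus\{0\})$ as required by Lemma \ref{lem:further approximations}. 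Since $\|ke_{0}\|=1$ for $k\in K_{g}$ (by the orthogonal action of $K_g$ on the subspaces containing $e_{0}$ established in the proof of Lemma \ref{lem:further approximations}), the integrand $h(z,rke_{0})=J_{f,g}(z,r)$ is $k$-independent, and $\nu_{g}(K_{g})=1$ gives
\[
\lim_{T\to\infty}\frac{\Theta(h,T)}{T^{d-s-2}}=C_{1}\int_{0}^{\infty}\int_{\mathbb{R}^{s}}J_{f,g}(z,r)\,r^{d-s-2}\,dz\,\frac{dr}{2r}.
\]
Unfolding the definition of $J_{f,g}$ cancels the $r^{d-s-2}$ prefactor and produces
\[
C_{1}\int_{0}^{\infty}\int_{\mathbb{R}^{s}}\int_{\mathbb{R}^{d-s-2}}f(z,r,v_{s+2},\ldots,v_{d-1},v_{d})\,dv_{s+2}\cdots dv_{d-1}\,dz\,\frac{dr}{2r},
\]
where $v_{d}$ is determined by $Q_{0}^{g}(z,r,v_{s+2},\ldots,v_{d})=a$.

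The crucial step is to identify this iterated integral with $\int_{X_{g}(\mathbb{R})}f\,dm_{g}$. In the basis from Lemma~2.2 of \cite{2011arXiv1111.4428S} where $Q_{0}^{g}$ takes the canonical form $Q_{1,\ldots,s}(v)+2v_{s+1}v_{d}+\sum_{i=s+2}^{s+r_{1}}v_{i}^{2}-\sum_{i=s+r_{1}+1}^{d-1}v_{i}^{2}$, one has $\partial Q_{0}^{g}/\partial v_{d}=2v_{s+1}=2r$. Substituting $a=Q_{0}^{g}(v)$ for $v_{d}$ in the disintegration \eqref{eq:def of m_0} yields
\[
dm_{g}=\frac{dv_{1}\cdots dv_{d-1}}{2r}
\]
on the chart $\{v_{s+1}>0\}$ of $X_{g}(\mathbb{R})$; the complementary chart contributes nothing since $\mathrm{supp}(f)\subset\mathbb{R}_{+}^{d}$. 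This identifies the iterated integral above with $C_{1}\int_{X_{g}(\mathbb{R})}f\,dm_{g}$, and Lemma \ref{siegel mod} then rewrites it as the required multiple of $\int_{G_{g}/\Gamma_{g}}F_{f,g}\,d\mu_{g}$.

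The only delicate step is the Jacobian computation: one must work in coordinates adapted to the canonical form of $Q_{0}^{g}$ and verify the disintegration of the Haar measure $m_{g}$ on the surface. Once that identification is in place, everything else is a formal concatenation of Lemma \ref{lem:further approximations} (a volume limit) with Lemma \ref{siegel mod} (the Siegel-type formula).
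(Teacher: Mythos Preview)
Your proposal is correct and follows essentially the same route as the paper: recognise the left-hand side as $\Theta(h,T)$ for $h(z,w)=J_{f,g}(z,\lVert w\rVert)$, apply Lemma~\ref{lem:further approximations}, identify the resulting $(z,r)$-integral with $\int_{X_{g}(\mathbb{R})}f\,dm_{g}$ via the disintegration \eqref{eq:def of m_0} (the paper phrases this as the change of variables $(v_{1},\dots,v_{d})\to(z,r,v_{s+2},\dots,v_{d-1},a)$, which is the same Jacobian computation you describe), and finish with Lemma~\ref{siegel mod}. The only cosmetic difference is that the paper establishes the identity \eqref{eq:midlle 3.6} first and then invokes Lemma~\ref{lem:further approximations}, whereas you do these two steps in the opposite order; you also make the $\lVert ke_{0}\rVert=1$ step explicit where the paper leaves it implicit.
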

\begin{proof}
Let $v_{i}$ be the components of $v$ when written in the basis $b_{1},\ldots,b_{d}$
from Lemma \ref{lemma about evctors}. Using the change of variables
$\left(v_{1},\ldots,v_{d}\right)\rightarrow\left(z_{1},\ldots,z_{s},r,v_{s+2},\ldots,a\right)$
where $Q_{0}^{g}\left(v_{1},\ldots,v_{d}\right)=a$ we see that 
\[
\int_{\mathbb{R}^{d}}f\left(v\right)dv=\int_{-\infty}^{\infty}\int_{0}^{\infty}\int_{\mathbb{R}^{s}}J_{f,g}\left(z,r\right)r^{d-s-2}dz\frac{dr}{2r}da.
\]
Hence it follows from how $m_{g}$ is defined (i.e. (\ref{eq:def of m_0}))
that 
\begin{equation}
\int_{X_{g}\left(\mathbb{R}\right)}f\left(v\right)dm_{g}\left(v\right)=\int_{0}^{\infty}\int_{\mathbb{R}^{s}}J_{f,g}\left(z,r\right)r^{d-s-2}dz\frac{dr}{2r}.\label{eq:midlle 3.6}
\end{equation}
Lemma \ref{lem:further approximations} and (\ref{eq:midlle 3.6})
imply that 
\[
\lim_{T\rightarrow\infty}\frac{1}{T^{d-s-2}}\int_{X_{g}\left(\mathbb{R}\right)}J_{f,g}\left(M_{0}^{g}\left(v\right),\left\Vert v\right\Vert /T\right)dm_{g}\left(v\right)=C_{1}\int_{K_{g}}\left(\int_{X_{g}\left(\mathbb{R}\right)}f\left(v\right)dm_{g}\right)d\nu_{g}\left(k\right).
\]
Now the conclusion follows from Lemma \ref{siegel mod}. 
\end{proof}
The purpose of Lemma \ref{lem:Link} is to relate the integral over
$G_{g}/\Gamma_{g}$ to an integral over $X_{g}\left(\mathbb{R}\right)$
in order that the integral over $X_{g}\left(\mathbb{R}\right)$ can
be approximated by an integral over $K_{g}$ via Theorem \ref{thm:ergodic result-1}.
Then the integral over $K_{g}$ can be approximated by the appropriate
counting function via Corollary \ref{lem:sum approx}. We now proceed
to put this into action in the proof of our main Theorem which is
just a modification of the proof in \cite{MR1609447}. 
\begin{proof}[Proof of Theorem \ref{main theorem-1}.]
 By Lemma \ref{lem:further approximations} the functional $\Psi$
on $C_{c}\left(\mathbb{R}^{s}\times\mathbb{R}^{d}\setminus\left\{ 0\right\} \right)$
given by 
\[
\Psi\left(h\right)=\lim_{T\rightarrow\infty}\frac{1}{T^{d-s-2}}\Theta\left(h,T\right)
\]
is continuous. For all connected regions $R\subset\mathbb{R}^{s}$
with smooth boundary, if $\mathbb{1}$ denotes the characteristic
function of $R\times A\left(1/2,1\right)$, then for every $\epsilon>0$
there exist continuous functions $h_{+}$ and $h_{-}$ on $\mathbb{R}^{s}\times\mathbb{R}^{d}\setminus\left\{ 0\right\} $
such that for all $\left(r,v\right)\in\mathbb{R}^{s}\times\mathbb{R}^{d}\setminus\left\{ 0\right\} $,
\begin{equation}
h_{-}\left(r,v\right)\leq\mathbb{1}\left(r,v\right)\leq h_{+}\left(r,v\right)\label{eq:final proof 1}
\end{equation}
and
\begin{equation}
\left|\Psi\left(h_{+}\right)-\Psi\left(h_{-}\right)\right|<\epsilon.\label{eq:final proof 2}
\end{equation}
Let $\mathcal{J}$ denote the space of linear combinations of functions
on $\mathbb{R}^{s}\times\mathbb{R}^{d}$ of the form $J_{f,g}\left(r,\left\Vert v\right\Vert \right)$,
where $f$ is continuous function of compact support on $\mathbb{R}_{+}^{d}$.
Let $\mathcal{H}$ denote the collection of functions in $C_{c}\left(\mathbb{R}^{s}\times\mathbb{R}^{d}\setminus\left\{ 0\right\} \right)$
such that if $h\in\mathcal{H}$ then $h$ takes an argument of the
form $\left(r,\left\Vert v\right\Vert \right)$. By Lemma \ref{lem:stone weirstarss}
$\mathcal{J}$ is dense in $\mathcal{H}$ and since $h_{+}$ and $h_{-}$
belong to $\mathcal{H}$ we may suppose that $h_{+}$ and $h_{-}$
maybe written as a finite linear combination of functions from $\mathcal{J}$.
The function $F_{f,g}$ defined by \eqref{eq:def of F} obeys the
bound \eqref{eq:bound in thm 2.4} with $\delta=1$, by \eqref{eq:upper bound for function}.
Moreover, Lemma 3.10 of \cite{MR1609447} implies that \textcolor{black}{$F_{f,g}\in L_{1}\left(G_{g}/\Gamma_{g}\right)$.
Therefore, if $h'\in\left\{ h_{+},h_{-}\right\} $, then for all $g\in\mathcal{C}_{\textrm{SL}}\left(r_{1},r_{2}\right)$
we can apply Theorem \ref{thm:ergodic result-1} with the function
$F_{f,g}$, followed by Corollary \ref{lem:sum approx} and Lemma
\ref{lem:Link} to get that there exists $t_{0}>0$, so that for all
$\epsilon>0$ and $t>t_{0}$,} 
\begin{equation}
\left|\frac{C\left(g\right)}{e^{\left(d-s-2\right)t}}\sum_{v\in X_{g}\left(\mathbb{Z}\right)}h'\left(M_{0}^{g}\left(v\right),ve^{-t}\right)-\Psi\left(h'\right)\right|<\epsilon.\label{eq:final proof 3}
\end{equation}
From the definition of $\Psi\left(h\right)$ we see that for \textcolor{black}{all
$h\in C_{c}\left(\mathbb{R}^{s}\times\mathbb{R}^{d}\setminus\left\{ 0\right\} \right)$
and} $g\in\mathcal{C}_{\textrm{SL}}\left(r_{1},r_{2}\right)$ there
exists $t_{0}>0$, so that for all $\epsilon>0$ and $t>t_{0}$, 
\begin{equation}
\left|\frac{1}{e^{\left(d-s-2\right)t}}\int_{X_{g}\left(\mathbb{R}\right)}h\left(M_{0}^{g}\left(v\right),ve^{-t}\right)dm_{g}\left(v\right)-\Psi\left(h\right)\right|<\epsilon.\label{eq:final proof 4}
\end{equation}
Clearly (\ref{eq:final proof 1}) implies 
\begin{alignat}{1}
\frac{C\left(g\right)}{e^{\left(d-s-2\right)t}}\sum_{v\in X_{g}\left(\mathbb{Z}\right)}h_{-}\left(M_{0}^{g}\left(v\right),ve^{-t}\right)-\Psi\left(h_{+}\right) & \leq\frac{C\left(g\right)}{e^{\left(d-s-2\right)t}}\sum_{v\in X_{g}\left(\mathbb{Z}\right)}\mathbb{1}\left(M_{0}^{g}\left(v\right),ve^{-t}\right)-\Psi\left(h_{+}\right)\nonumber \\
 & \leq\frac{C\left(g\right)}{e^{\left(d-s-2\right)t}}\sum_{v\in X_{g}\left(\mathbb{Z}\right)}h_{+}\left(M_{0}^{g}\left(v\right),ve^{-t}\right)-\Psi\left(h_{+}\right).\label{eq:final proof 5}
\end{alignat}
Apply (\ref{eq:final proof 2}) to the left hand side of (\ref{eq:final proof 5})
and then apply and (\ref{eq:final proof 3}) with suitable choices
of $\epsilon$'s to get that for all $g\in\mathcal{C}_{\textrm{SL}}\left(r_{1},r_{2}\right)$
there exists $t_{0}>0$, so that for all $\theta>0$ and $t>t_{0}$,
\begin{equation}
\left|\frac{C\left(g\right)}{e^{\left(d-s-2\right)t}}\sum_{v\in X_{g}\left(\mathbb{Z}\right)}\mathbb{1}\left(M_{0}^{g}\left(v\right),ve^{-t}\right)-\Psi\left(h_{+}\right)\right|\leq\frac{\theta}{2}.\label{final proof 6}
\end{equation}
Similarly using (\ref{eq:final proof 1}), (\ref{eq:final proof 2})
and (\ref{eq:final proof 4}) we see that for all $g\in\mathcal{C}_{\textrm{SL}}\left(r_{1},r_{2}\right)$
there exists $t_{0}>0$, so that for all $\theta>0$ and $t>t_{0}$,
\begin{equation}
\left|\frac{1}{e^{\left(d-s-2\right)t}}\int_{X_{g}\left(\mathbb{R}\right)}\mathbb{1}\left(M_{0}^{g}\left(v\right),ve^{-t}\right)dm_{g}\left(v\right)-\Psi\left(h_{+}\right)\right|\leq\frac{\theta}{2}.\label{final proof 6-1}
\end{equation}
Hence using (\ref{final proof 6}) and (\ref{final proof 6-1}) we
see that for all $g\in\mathcal{C}_{\textrm{SL}}\left(r_{1},r_{2}\right)$
there exists $t_{0}>0$, so that for all $\theta>0$ and $t>t_{0}$
\begin{align}
\left|C\left(g\right)\sum_{v\in X_{g}\left(\mathbb{Z}\right)}\mathbb{1}\left(M_{0}^{g}\left(v\right),ve^{-t}\right)-\int_{X_{g}}\mathbb{1}\left(M_{0}^{g}\left(v\right),ve^{-t}\right)dm_{g}\left(v\right)\right|\leq\theta.\label{eq:final proof 7}
\end{align}
This means that for all $g\in\mathcal{C}_{\textrm{SL}}\left(r_{1},r_{2}\right)$
there exists $t_{0}>0$, so that for all $\theta>0$ and $t>t_{0}$,
\begin{multline}
\left(1-\theta\right)\int_{X_{g}\left(\mathbb{R}\right)}\mathbb{1}\left(M_{0}^{g}\left(v\right),ve^{-t}\right)dm_{g}\left(v\right)\\
\leq C\left(g\right)\sum_{v\in X_{g}\left(\mathbb{Z}\right)}\mathbb{1}\left(M_{0}^{g}\left(v\right),ve^{-t}\right)\leq\left(1+\theta\right)\int_{X_{g}\left(\mathbb{R}\right)}\mathbb{1}\left(M_{0}^{g}\left(v\right),ve^{-t}\right)dm_{g}\left(v\right).\label{eq:final proof 8}
\end{multline}
Hence for all $\left(Q,M\right)\in\mathcal{\mathcal{C}_{\textrm{Pairs}}}\left(r_{1},r_{2}\right)$
there exists $t_{0}>0$, so that for all $\theta>0$ and $t>t_{0}$,
\begin{alignat*}{1}
\left(1-\theta\right)\textrm{Vol}_{X_{Q}}\left(V_{M}\left(R\right)\cap A\left(T/2,T\right)\right) & \leq C\left(g\right)\left|X_{Q}\left(\mathbb{Z}\right)\cap V_{M}\left(R\right)\cap A\left(T/2,T\right)\right|\\
 & \leq\left(1+\theta\right)\textrm{Vol}_{X_{Q}}\left(V_{M}\left(R\right)\cap A\left(T/2,T\right)\right).
\end{alignat*}
The conclusion of the Theorem follows by applying Corollary \ref{cor:volume estimate}
and summing a geometric series.
\end{proof}

\section{Counterexamples\label{sec:Counterexamples}}

In small dimensions there are slightly more integer points than expected
on the quadratic surfaces defined by forms with signature $\left(1,2\right)$
and $\left(2,2\right)$. This fact was exploited in \cite{MR1609447}
to show that the expected asymptotic formula for the situation they
consider is not valid for these special cases. In a similar manner
it is possible to construct examples that show that Theorem \ref{main theorem}
is not valid in the cases that the signature of $H_{g}$ is $\left(1,2\right)$
or $\left(2,2\right)$. In this section, for the sake of brevity we
restrict our attention to the case when $s=1$, but we note that similar
arguments would hold in the case when $s>1$. To start with make the
following definitions 
\begin{eqnarray*}
Q_{1}\left(x\right) & = & -x_{1}x_{2}+x_{3}^{2}+x_{4}^{2},\\
Q_{2}\left(x\right) & = & x_{1}x_{2}+x_{3}^{2}-x_{4}^{2},\\
Q_{3}\left(x\right) & = & -x_{1}x_{2}+x_{3}^{2}+x_{4}^{2}-\alpha x_{5}^{2},\\
L_{\alpha}\left(x\right) & = & x_{1}-\alpha x_{2}.
\end{eqnarray*}
We can now prove. 
\begin{lem}
\label{lem:ce1}Let $\epsilon>0$, suppose $\left[a,b\right]=\left[1/2-\epsilon,1\right]$
or $\left[-1,-1/2+\epsilon\right]$. Let $a>0$, then for every $T_{0}>0$,
the set of $\beta\in\mathbb{R}$ for which there exists a $T>T_{0}$
such that 
\[
\left|X_{Q_{1}}^{a}\left(\mathbb{Z}\right)\cap V_{L_{\beta}}\left(\left[a,b\right]\right)\cap A\left(0,T\right)\right|>T\left(\log T\right)^{1-\epsilon}\quad\textrm{or}\quad\left|X_{Q_{2}}^{a}\left(\mathbb{Z}\right)\cap V_{L_{\beta}}\left(\left[a,b\right]\right)\cap A\left(0,T\right)\right|>T\left(\log T\right)^{1-\epsilon}
\]
is dense. Similarly if $a=0$, then for every $T_{0}>0$, the set
of $\beta\in\mathbb{R}$ for which there exists a $T>T_{0}$ such
that 
\[
\left|X_{Q_{3}}^{a}\left(\mathbb{Z}\right)\cap V_{L_{\beta}}\left(\left[a,b\right]\right)\cap A\left(0,T\right)\right|>T^{2}\left(\log T\right)^{1-\epsilon}
\]
is dense. \end{lem}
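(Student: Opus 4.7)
The plan is to adapt the counterexample construction of \cite[Section~2]{MR1609447} to the linear-constraint setting, producing for each open interval $I\subset\mathbb{R}$ and each $T_0>0$ some $\beta\in I$ and $T>T_0$ realising the claimed lower bound. I treat $Q_1$ with $a>0$ and $\beta>0$ in detail; the remaining cases are analogous (using $Q_2$ for $\beta<0$, since then $Q_2|_{\ker L_\beta}$ has exceptional signature $(1,2)$, and $Q_3$ handled by adjoining the free variable $x_5$).

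On $X_{Q_1}^{a}(\mathbb{R})$ the identity $x_3^2+x_4^2=a+x_1x_2$ reduces the cardinality to a sum $\sum r_2(a+x_1x_2)$ over integer pairs $(x_1,x_2)$ satisfying the linear and norm constraints, where $r_2$ is the sum-of-two-squares function. I restrict to $\beta=(u/v)^2$ with coprime positive integers $u,v$, a set dense in $(0,\infty)$. Setting $P=u^2$, $Q=v^2$, the linear constraint becomes the integer equation $Qx_1-Px_2=\ell$ with $\ell\in[Q(1/2-\epsilon),Q]$; parametrising solutions by $k\in\mathbb{Z}$ via $(x_1,x_2)=(x_1^0+Pk,\,x_2^0+Qk)$ for a particular integer solution $(x_1^0,x_2^0)$ yields $a+x_1x_2=(uvk)^2+B_\ell k+(A_\ell+a)$ for integers $A_\ell,B_\ell$ depending on $\ell$. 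A short computation shows that the discriminant of this polynomial in $k$, independent of the choice of particular solution, equals $-(4PQa-\ell^2)$.

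The crux is the lower bound
\[
\sum_{|k|\le cT/Q} r_2\bigl((uvk)^2+B_\ell k+A_\ell+a\bigr)\;\gtrsim\;\frac{T\log T}{Q}
\]
for a suitable $\ell$. Writing $r_2=4\,(\mathbb{1}*\chi_{-4})$ (with $\chi_{-4}$ the nontrivial character modulo $4$) and interchanging summations gives a Dirichlet series whose Euler factors decompose, at odd primes, as contributions of $L(s,\chi_{-4})$ and $L(s,\chi_{-4}\chi_\Delta)$, where $\Delta$ is the polynomial's discriminant. The latter $L$-function is that of a principal character---and hence contributes a simple pole at $s=1$, giving the logarithmic factor via a standard Tauberian argument---precisely when $-\Delta=4PQa-\ell^2$ is a perfect square, i.e.\ when $\ell^2+m^2=4(uv)^2 a$ for some integer $m$. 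Such pairs $(\ell,m)$ with $\ell\in[Q(1/2-\epsilon),Q]$ exist for infinitely many coprime $(u,v)$ with $(u/v)^2\in I$: once $2uv$ has a prime divisor $\equiv 1\pmod 4$ of appropriate size, Gaussian factorisation provides the required representation. This gives a dense set of good $\beta\in I$, and choosing $T$ sufficiently large relative to $Q$ and $\epsilon$ then produces the count exceeding $T(\log T)^{1-\epsilon}$.

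For $Q_2$ the factorisation $x_3^2-x_4^2=(x_3-x_4)(x_3+x_4)$ replaces $r_2$ by a divisor-type function whose average order is already $\log$, so the extra logarithmic factor appears automatically and no special arithmetic on $\beta$ is needed; for $Q_3$ with $a=0$, summing the divisor function over values of the ternary form $x_3^2+x_4^2-\alpha x_5^2$ in an $(x_1,x_2)$-slab yields the asserted $T^2\log T$. The principal technical obstacle throughout is the Tauberian step in the $Q_1$ argument: while the heuristic (an additional pole arising from the square leading coefficient) is transparent, a rigorous lower bound requires verifying non-vanishing of local densities at small primes and controlling Tauberian error terms---standard but intricate analytic number theory that constitutes the bulk of the proof.
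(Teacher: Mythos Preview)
Your approach is genuinely different from the paper's, and considerably harder. The paper does \emph{not} count directly on the thickened slice $L_\beta\in[a,b]$. Instead it works on the \emph{exact} slice $L_\alpha(x)=0$ for $\alpha$ with $\sqrt{\alpha}\in\mathbb{Q}$: there the problem reduces literally to the ternary forms $x_3^2+x_4^2-(pq\,n)^2=a$ (for $Q_1$) and $(pq\,n)^2+x_3^2-x_4^2=a$ (for $Q_2$), and the $\sim T\log T$ count is quoted wholesale from Lemma~3.14 of \cite{MR1609447}. The paper then observes that for $x$ in the annulus $T/2\le\|x\|\le T$ lying on this slice one has $|x_2|\asymp T$, so choosing $\beta_\pm=\alpha\pm\sqrt{(\alpha^2+\alpha+1)}/T$ gives $L_{\beta_\pm}(x)=\pm\sqrt{(\alpha^2+\alpha+1)}\,x_2/T\in[1/2-\epsilon,1]$ (resp.\ the negative interval). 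Since $\beta_\pm\to\alpha$ and such $\alpha$ are dense, the $\beta$'s are dense. No new analytic number theory is performed.

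By contrast, you fix $\beta=(u/v)^2$ and try to produce the logarithm on the thick slice itself, which forces you to re-establish a variant of the very EMM lemma the paper simply cites; you correctly flag the Tauberian step as the bulk of the work. There is also a genuine obstruction in your construction for $Q_1$: the condition $\ell^2+m^2=4(uv)^2a$ with $\ell\in[v^2(1/2-\epsilon),v^2]$ forces $v(1/2-\epsilon)\le 2u\sqrt{a}$, i.e.\ $\beta\gtrsim(1-2\epsilon)^2/(16a)$, so your argument as written cannot reach small positive $\beta$ via $Q_1$ with either choice of interval. The perturbation trick sidesteps this entirely: the arithmetic input is confined to the slice $L_\alpha=0$ (leading coefficient automatically a perfect square once $\sqrt{\alpha}\in\mathbb{Q}$), and the passage from $\alpha$ to $\beta$ is a soft continuity argument with no further constraints.
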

\begin{proof}
Let $S_{i}\left(\alpha,T,a\right)=\left\{ x\in\mathbb{Z}^{d_{i}}:L_{\alpha}\left(x\right)=0,Q_{i}\left(x\right)=a,\left\Vert x\right\Vert \leq T\right\} $
where $d_{i}=4$ if $i=1$ or $2$ and $d_{i}=5$ if $i=3$. Lemma
3.14 of \cite{MR1609447} implies that
\begin{eqnarray}
\left|S_{i}\left(\alpha,T,a\right)\right| & \sim T\log T & \quad\textrm{for }i=1,2\textrm{ and }\sqrt{\alpha}\in\mathbb{Q}\textrm{ and }a>0,\label{eq:ce1}\\
\left|S_{3}\left(\alpha,T,0\right)\right| & \sim T^{2}\log T & \quad\textrm{for }\sqrt{\alpha}\in\mathbb{Q}.\label{eq:ce1.1}
\end{eqnarray}
Note that if $i=1,2$ and $x\in S_{i}\left(\alpha,T,a\right)\setminus S_{i}\left(\alpha,T/2,a\right)$,
then 
\begin{equation}
\frac{T^{2}}{4}-\left(\alpha^{2}+1\right)x_{2}^{2}\leq x_{3}^{2}+x_{4}^{2}\leq T^{2}-\left(\alpha^{2}+1\right)x_{2}^{2}\label{eq:ce2}
\end{equation}
and 
\begin{equation}
x_{3}^{2}+x_{4}^{2}=\alpha x_{2}^{2}+a.\label{eq:ce3}
\end{equation}
Similarly if $x\in S_{3}\left(\alpha,T,0\right)\setminus S_{3}\left(\alpha,T/2,0\right)$,
\begin{equation}
\frac{T^{2}}{4}-\left(\alpha^{2}+1\right)x_{2}^{2}\leq x_{3}^{2}+x_{4}^{2}+x_{5}^{2}\leq T^{2}-\left(\alpha^{2}+1\right)x_{2}^{2}\label{eq:ce2-1}
\end{equation}
and 
\begin{equation}
x_{3}^{2}+x_{4}^{2}=\alpha\left(x_{2}^{2}+x_{5}^{2}\right).\label{eq:ce3-1}
\end{equation}
Combining (\ref{eq:ce2}) and (\ref{eq:ce3}) gives 
\begin{equation}
\frac{T^{2}-4a}{4\left(\alpha^{2}+\alpha+1\right)}\leq x_{2}^{2}\leq\frac{T^{2}-a}{\alpha^{2}+\alpha+1}.\label{eq:ce4}
\end{equation}
Respectively, combining (\ref{eq:ce2-1}) and (\ref{eq:ce3-1}) gives
\begin{equation}
\frac{T^{2}-\left(\alpha+1\right)x_{5}^{2}}{4\left(\alpha^{2}+\alpha+1\right)}\leq x_{2}^{2}\leq\frac{T^{2}-\left(\alpha+1\right)x_{5}^{2}}{\alpha^{2}+\alpha+1},\label{eq:ce4-1}
\end{equation}
which upon noting that $-T\leq x_{5}\leq T$ offers
\begin{equation}
\frac{T^{2}-\left(\alpha+1\right)T}{4\left(\alpha^{2}+\alpha+1\right)}\leq x_{2}^{2}\leq\frac{T^{2}+\left(\alpha+1\right)T}{\alpha^{2}+\alpha+1}.\label{eq:ce4-2}
\end{equation}
Take 
\begin{equation}
\beta_{\pm}=\alpha\pm\sqrt{\frac{\alpha^{2}+\alpha+1}{T^{2}}}.\label{eq:cebeat}
\end{equation}
It is clear that $L_{\beta_{\pm}}\left(x\right)=L_{\alpha}\left(x\right)\pm\sqrt{\frac{\alpha^{2}+\alpha+1}{T^{2}}}x_{2}$
and hence if $i=1,2$ and $x\in S_{i}\left(\alpha,T,a\right)\setminus S_{i}\left(\alpha,T/2,a\right)$,
then (\ref{eq:ce4}) implies 
\begin{equation}
\sqrt{\frac{1}{4}-\frac{a}{T^{2}}}\leq L_{\beta_{+}}\left(x\right)\leq\sqrt{1-\frac{a}{T^{2}}}\quad\textrm{and}\quad-\sqrt{1-\frac{a}{T^{2}}}\leq L_{\beta_{-}}\left(x\right)\leq-\sqrt{\frac{1}{4}-\frac{a}{T^{2}}}.\label{eq:ce5}
\end{equation}
Similarly if $x\in S_{3}\left(\alpha,T,0\right)\setminus S_{3}\left(\alpha,T/2,0\right)$,
then (\ref{eq:ce4-2}) implies 
\begin{equation}
\sqrt{\frac{1}{4}-\frac{\left(\alpha+1\right)}{T}}\leq L_{\beta_{+}}\left(x\right)\leq\sqrt{1-\frac{\left(\alpha+1\right)}{T}}\quad\textrm{and}\quad-\sqrt{1-\frac{\left(\alpha+1\right)}{T}}\leq L_{\beta_{-}}\left(x\right)\leq-\sqrt{\frac{1}{4}-\frac{\left(\alpha+1\right)}{T}}.\label{eq:ce5-1}
\end{equation}
This means for all $\epsilon>0$ there exists $T_{+}>0$ such that
if $T>T_{+}$ then $S_{i}\left(\alpha,T,a\right)\subset X_{Q_{i}}^{a}\left(\mathbb{Z}\right)\cap V_{L_{\beta_{+}}}\left(\left[1/2-\epsilon,1\right]\right)\cap A\left(0,T\right)$
respectively there also exists $T_{-}>0$ such that if $T>T_{-}$
then $S_{i}\left(\alpha,T,a\right)\subset X_{Q_{i}}^{a}\left(\mathbb{Z}\right)\cap V_{L_{\beta_{-}}}\left(\left[-1,-1/2+\epsilon\right]\right)\cap A\left(0,T\right)$.
By (\ref{eq:ce1}) and (\ref{eq:ce1.1}) for $i=1,2$ and large enough
$T$, $\left|S_{i}\left(\alpha,T,a\right)\right|>T\left(\log T\right)^{1-\epsilon}$
and $\left|S_{i}\left(\alpha,T,a\right)\right|>CT^{2}\left(\log T\right)^{1-\epsilon}$.
The set of $\beta$ satisfying (\ref{eq:cebeat}) for rational $\alpha$
and large $T$ is clearly dense and this proves the Lemma. \end{proof}
\begin{thm}
Let $j=1,2$. For every $\epsilon>0$ and every interval $\left[a,b\right]$
there exists a rational quadratic form $Q$ and an irrational linear
form $L$ such that $\textrm{Stab}_{SO\left(Q\right)}\left(L\right)\cong SO\left(j,2\right)$
such that for an infinite sequence $T_{k}\rightarrow\infty$, 
\[
\left|X_{Q}^{a_{j}}\left(\mathbb{Z}\right)\cap V_{L}\left(\left[a,b\right]\right)\cap A\left(0,T_{k}\right)\right|>T_{k}^{j}\left(\log T_{k}\right)^{1-\epsilon},
\]
where $a_{1}>0$ and $a_{2}=0$.\end{thm}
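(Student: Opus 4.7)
The plan is to upgrade the density statement of Lemma \ref{lem:ce1} to the existence of a single irrational $\beta^*$ that realises the required count along an unbounded sequence $T_k$. Throughout, I would work with $Q_1$ (or $Q_2$) when $j = 1$ and with $Q_3$ (for a fixed positive rational $\alpha$) when $j = 2$, and with the linear forms $L_\beta(x) = x_1 - \beta x_2$ already used in Lemma \ref{lem:ce1}; the irrationality of $\beta^*$ then directly gives the irrationality of $L$ in the sense of condition \ref{enu:condition 3} of Theorem \ref{main theorem}.

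The first step would be to extend Lemma \ref{lem:ce1} from the specific intervals $[1/2 - \epsilon, 1]$ and $[-1, -1/2 + \epsilon]$ to an arbitrary $[a,b]$. This is accomplished by (i) rescaling $L_\beta$ by a constant $c$, and (ii) restricting attention to a refined annulus $A(c_1 T, c_2 T)$ in place of $A(T/2, T)$, for constants $c_1 < c_2$ and $c$ chosen so that the range of $L_{\beta_\pm}(x)$ described in \eqref{eq:ce5} (resp.\ \eqref{eq:ce5-1}) lies strictly inside $[a, b]$ with a fixed margin $\mu > 0$. Applying Lemma 3.14 of \cite{MR1609447} to the count on this thinner annulus (as a difference of two asymptotics of the same shape) still yields the lower bound $\gtrsim T^j(\log T)^{1-\epsilon'}$ for any $\epsilon' \in (0, \epsilon)$, and the resulting set of admissible $\beta$ remains dense in $\mathbb{R}$.

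I would then perform a Cantor-type construction. Inductively choose positive rationals $\alpha_k$ with $\sqrt{\alpha_k}\in\mathbb{Q}$, integers $T_k$ with $T_{k+1} > 2 T_k$, and $\beta_k = \alpha_k + \sqrt{(\alpha_k^2+\alpha_k+1)/T_k^2}$, together with the associated set $S_k$ of integer points (extracted from the extended lemma) satisfying $|S_k| > T_k^j(\log T_k)^{1-\epsilon'}$ and $L_{\beta_k}(x) \in [a+\mu,\, b-\mu]$ for all $x\in S_k$. At each stage require $|\beta_{k+1} - \beta_k| < 2^{-k}/T_k$ (possible by the density just established, after picking $T_{k+1}$ large enough), and also exclude a shrinking neighbourhood of the $k$-th rational in a fixed enumeration of $\mathbb{Q}$, so that $\beta^* := \lim_k \beta_k$ exists and is irrational. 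For any $x\in S_k$ one has $|x_2|\leq \|x\|\leq T_k$, and since the $T_m$ are increasing,
\begin{equation*}
|L_{\beta^*}(x) - L_{\beta_k}(x)| = |\beta^* - \beta_k|\,|x_2| \leq T_k\sum_{m\geq k}\frac{2^{-m}}{T_m} \leq 2^{1-k}.
\end{equation*}
Starting the construction at $k_0$ large enough that $2^{1-k_0} < \mu$ guarantees $L_{\beta^*}(x)\in[a,b]$ for every $x\in S_k$ and every $k\geq k_0$, hence $\bigl|X_Q^{a_j}(\mathbb{Z})\cap V_{L_{\beta^*}}([a,b])\cap A(0,T_k)\bigr| \geq |S_k| > T_k^j(\log T_k)^{1-\epsilon}$ for all large $k$. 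A direct signature computation on $\ker L_{\beta^*}=\{x_1=\beta^*x_2\}$, using $\beta^*>0$, gives $Q_1|_{\ker L_{\beta^*}}$ of signature $(2,1)$ and $Q_3|_{\ker L_{\beta^*}}$ of signature $(2,2)$, so that $\mathrm{Stab}_{SO(Q)}(L_{\beta^*})\cong SO(1,2)$ when $j=1$ and $\cong SO(2,2)$ when $j=2$.

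The delicate point I expect to grapple with is the simultaneous balancing of scales across all stages of the induction: the vectors in $S_k$ live at scale $T_k$, so a $\beta$-perturbation of order $1/T_k$ already shifts their $L_\beta$-values by $O(1)$. The summability $|\beta_{m+1}-\beta_m|<2^{-m}/T_m$ is precisely what keeps the cumulative shift bounded by the fixed margin $\mu$ simultaneously at every earlier stage, and it is this bookkeeping (rather than any new dynamical input) that is the main technical point; once it is in place, the theorem is essentially a repackaging of Lemma \ref{lem:ce1}.
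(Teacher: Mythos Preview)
Your approach is correct but takes a genuinely different route from the paper's. The paper first reduces an arbitrary $[a,b]$ to $[1/4,5/4]$ (or its negative) by passing to a subinterval and rescaling the linear form, sidestepping your refined-annulus extension of Lemma~\ref{lem:ce1}. Then, in place of your hands-on Cantor construction, it invokes the Baire category theorem: for each $S>0$ it sets $\mathcal{U}_S$ equal to the set of $\gamma$ for which there exist $\beta$ and $T>S$ with the required count and $|\beta-\gamma|<T^{-2}$, observes that each $\mathcal{U}_S$ is open and dense by Lemma~\ref{lem:ce1}, and picks an irrational $\gamma$ in the residual set $\bigcap_k\mathcal{U}_{2^{k+1}}$. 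The quadratic proximity $|\beta_k-\gamma|<T_k^{-2}$ then gives $|L_{\beta_k}(x)-L_\gamma(x)|<T_k^{-1}<1/4$ for all $\|x\|\le T_k$ in one stroke, so the ``simultaneous balancing of scales'' you flag as the delicate point simply does not arise---there is no cumulative error to track across stages. The Baire route is slicker and, as a bonus, shows the set of admissible $\gamma$ is of second category (hence uncountable), whereas your construction is more elementary and explicit but produces only a single $\beta^*$. One small inconsistency to tidy in your write-up: for $j=2$ you declare $\alpha$ fixed in $Q_3$ but then write ``inductively choose $\alpha_k$''; since $Q_3$ depends on its parameter, you must keep that one fixed and let only the $\alpha_k$ in $L_{\alpha_k}$ vary, which in turn requires checking that an asymptotic of the type \eqref{eq:ce1.1} persists when the two parameters differ.
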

\begin{proof}
Since the interval $\left[a,b\right]$ must intersect either the positive
or negative reals there is no loss of generality in assuming, after
passing to a subset and rescaling that $\left[a,b\right]=\left[1/4,5/4\right]$
or $\left[-5/4,-1/4\right]$. For a given $S>0$ and $i=1,2$ let
$\mathcal{U}_{S}$ be the set of $\gamma\in\mathbb{R}$ for which
there exist $\beta\in\mathbb{R}$ and $T>S$ with 
\begin{equation}
\left|X_{Q_{i}}^{a_{1}}\left(\mathbb{Z}\right)\cap V_{L_{\beta}}\left(\left[1/2,1\right]\right)\cap A\left(0,T\right)\right|>CT\log T,\label{eq:ce2.1}
\end{equation}
and
\begin{equation}
\left|\beta-\gamma\right|<T^{-2}.\label{eq:ce2.2}
\end{equation}
Then $\mathcal{U}_{S}$ is open and dense by Lemma \ref{lem:ce1}.
By the Baire category Theorem (cf. \cite{MR924157}, Theorem 5.6)
$\bigcap_{k=1}^{\infty}\mathcal{U}_{2^{k+1}}$ is dense in $\mathbb{R}$
and is in fact of second category and hence uncountable. Let $\gamma\in\bigcap_{k=1}^{\infty}\mathcal{U}_{2^{k+1}}\setminus\mathbb{Q}$,
then there exist infinite sequences $\beta_{k}$ and $T_{k}$ such
that (\ref{eq:ce2.1}) and (\ref{eq:ce2.2}) hold with $\beta$ replaced
by $\beta_{k}$ and $T$ by $T_{k}$. Note that (\ref{eq:ce2.2})
implies that for $\left\Vert x\right\Vert <T_{k}$, 
\[
\left|L_{\beta_{k}}\left(x\right)-L_{\gamma}\left(x\right)\right|<\frac{1}{T_{k}}<\frac{1}{4},
\]
so that 
\[
X_{Q_{i}}^{a_{1}}\left(\mathbb{Z}\right)\cap V_{L_{\beta_{k}}}\left(\left[1/2,1\right]\right)\cap A\left(0,T_{k}\right)\subseteq X_{Q_{i}}^{a_{1}}\left(\mathbb{Z}\right)\cap V_{L_{\gamma}}\left(\left[1/4,5/4\right]\right)\cap A\left(0,T_{k}\right)
\]
and hence $\left|X_{Q_{i}}^{a_{1}}\left(\mathbb{Z}\right)\cap V_{L_{\gamma}}\left(\left[1/4,5/4\right]\right)\cap A\left(0,T_{k}\right)\right|>CT_{k}\log T_{k}$
by (\ref{eq:ce2.1}). If $i=3$ then we can carry out the same process
but we replace $\mathcal{U}_{S}$ by the set $\mathcal{W}_{S}$, of
$\gamma\in\mathbb{R}$ for which there exist $\beta\in\mathbb{R}$
and $T>S$ with 
\[
\left|X_{Q_{3}}^{0}\left(\mathbb{Z}\right)\cap V_{L_{\beta}}\left(\left[1/2,1\right]\right)\cap A\left(0,T\right)\right|>CT^{2}\log T,
\]
and
\[
\left|\beta-\gamma\right|<T^{-2}.
\]

\end{proof}
\bibliographystyle{amsalpha}
\bibliography{References}

\def\cprime{$'$}
\providecommand{\bysame}{\leavevmode\hbox to3em{\hrulefill}\thinspace}
\providecommand{\MR}{\relax\ifhmode\unskip\space\fi MR }
\providecommand{\MRhref}[2]{%
  \href{http://www.ams.org/mathscinet-getitem?mr=#1}{#2}
}
\providecommand{\href}[2]{#2}
\begin{thebibliography}{EMM98}

\bibitem[BQ12]{MR2874934}
Yves Benoist and Jean-Francois Quint, \emph{Random walks on finite volume
  homogeneous spaces}, Invent. Math. \textbf{187} (2012), no.~1, 37--59.
  \MR{2874934}

\bibitem[Cas72]{MR0349591}
J.~W.~S. Cassels, \emph{An introduction to {D}iophantine approximation}, Hafner
  Publishing Co., New York, 1972, Facsimile reprint of the 1957 edition,
  Cambridge Tracts in Mathematics and Mathematical Physics, No. 45. \MR{0349591
  (50 \#2084)}

\bibitem[Dan81]{MR629475}
S.~G. Dani, \emph{Invariant measures and minimal sets of horospherical flows},
  Invent. Math. \textbf{64} (1981), no.~2, 357--385. \MR{629475 (83c:22009)}

\bibitem[DM90]{MR1032925}
S.~G. Dani and G.~A. Margulis, \emph{Orbit closures of generic unipotent flows
  on homogeneous spaces of {${\rm SL}(3,{\bf R})$}}, Math. Ann. \textbf{286}
  (1990), no.~1-3, 101--128. \MR{1032925 (91k:22026)}

\bibitem[DM93]{MR1237827}
\bysame, \emph{Limit distributions of orbits of unipotent flows and values of
  quadratic forms}, I. {M}. {G}el\cprime fand {S}eminar, Adv. Soviet Math.,
  vol.~16, Amer. Math. Soc., Providence, RI, 1993, pp.~91--137. \MR{1237827
  (95b:22024)}

\bibitem[EMM98]{MR1609447}
Alex Eskin, Gregory Margulis, and Shahar Mozes, \emph{Upper bounds and
  asymptotics in a quantitative version of the {O}ppenheim conjecture}, Ann. of
  Math. (2) \textbf{147} (1998), no.~1, 93--141. \MR{1609447 (99a:11043)}

\bibitem[Gor04]{MR2067128}
Alexander Gorodnik, \emph{Oppenheim conjecture for pairs consisting of a linear
  form and a quadratic form}, Trans. Amer. Math. Soc. \textbf{356} (2004),
  no.~11, 4447--4463 (electronic). \MR{2067128 (2005h:11146)}

\bibitem[Hel00]{MR1790156}
Sigurdur Helgason, \emph{Groups and geometric analysis}, Mathematical Surveys
  and Monographs, vol.~83, American Mathematical Society, Providence, RI, 2000,
  Integral geometry, invariant differential operators, and spherical functions,
  Corrected reprint of the 1984 original. \MR{1790156 (2001h:22001)}

\bibitem[Hel01]{MR1834454}
\bysame, \emph{Differential geometry, {L}ie groups, and symmetric spaces},
  Graduate Studies in Mathematics, vol.~34, American Mathematical Society,
  Providence, RI, 2001, Corrected reprint of the 1978 original. \MR{1834454
  (2002b:53081)}

\bibitem[HW08]{MR2454302}
Martin Henk and J{\"o}rg~M. Wills, \emph{Minkowski's successive minima}, Number
  theory \& discrete geometry, Ramanujan Math. Soc. Lect. Notes Ser., vol.~6,
  Ramanujan Math. Soc., Mysore, 2008, pp.~129--142. \MR{2454302 (2010f:11114)}

\bibitem[KSS02]{MR1928528}
Dmitry Kleinbock, Nimish Shah, and Alexander Starkov, \emph{Dynamics of
  subgroup actions on homogeneous spaces of {L}ie groups and applications to
  number theory}, Handbook of dynamical systems, {V}ol.\ 1{A}, North-Holland,
  Amsterdam, 2002, pp.~813--930. \MR{1928528 (2004b:22021)}

\bibitem[Mar89]{MR993328}
G.~A. Margulis, \emph{Discrete subgroups and ergodic theory}, Number theory,
  trace formulas and discrete groups ({O}slo, 1987), Academic Press, Boston,
  MA, 1989, pp.~377--398. \MR{993328 (90k:22013a)}

\bibitem[PR94]{MR1278263}
Vladimir Platonov and Andrei Rapinchuk, \emph{Algebraic groups and number
  theory}, Pure and Applied Mathematics, vol. 139, Academic Press Inc., Boston,
  MA, 1994, Translated from the 1991 Russian original by Rachel Rowen.
  \MR{1278263 (95b:11039)}

\bibitem[Rat94]{MR1262705}
M.~Ratner, \emph{Invariant measures and orbit closures for unipotent actions on
  homogeneous spaces}, Geom. Funct. Anal. \textbf{4} (1994), no.~2, 236--257.
  \MR{1262705 (95c:22018)}

\bibitem[Rud76]{MR0385023}
Walter Rudin, \emph{Principles of mathematical analysis}, third ed.,
  McGraw-Hill Book Co., New York, 1976, International Series in Pure and
  Applied Mathematics. \MR{0385023 (52 \#5893)}

\bibitem[Rud87]{MR924157}
\bysame, \emph{Real and complex analysis}, third ed., McGraw-Hill Book Co., New
  York, 1987. \MR{924157 (88k:00002)}

\bibitem[{Sar}11]{2011arXiv1111.4428S}
O.~{Sargent}, \emph{{Density of values of linear maps on quadratic surfaces}},
  ArXiv e-prints (2011).

\bibitem[Sch68]{MR0224562}
Wolfgang~M. Schmidt, \emph{Asymptotic formulae for point lattices of bounded
  determinant and subspaces of bounded height}, Duke Math. J. \textbf{35}
  (1968), 327--339. \MR{0224562 (37 \#161)}

\bibitem[Sch95]{MR1314965}
U.~Schnell, \emph{Successive minima, intrinsic volumes, and lattice
  determinants}, Discrete Comput. Geom. \textbf{13} (1995), no.~2, 233--239.
  \MR{1314965 (95k:52029)}

\end{thebibliography}

\end{document}